\newcolumntype{L}{>{$}l<{$}}
\theoremstyle{plain}
\newtheorem{thm}{Theorem}[section]
\newtheorem{lem}[thm]{Lemma}
\newtheorem{prop}[thm]{Proposition}
\newtheorem{cor}[thm]{Corollary}
\newtheorem{ques}{Question}
\theoremstyle{definition}
\newtheorem{defn}[thm]{Definition}
\theoremstyle{remark}
\newtheorem*{rem}{Remark}
\newcommand{\re}{\mathbb{R}}
\newcommand{\co}{\mathbb{C}}
\newcommand{\ze}{\mathbb{Z}}
\newcommand{\na}{\mathbb{N}}
\newcommand{\Ker}{\mathrm{Ker}}
\renewcommand{\Im}{\mathrm{Im} \, }
\newcommand{\Ad}{\mathrm{Ad}}
\renewcommand{\cong}{\mathrm{\simeq}}
\newcommand{\ag}{\mathfrak{g}}
\newcommand{\ap}{\mathfrak{p}}
\newcommand{\au}{\mathfrak{u}}
\newcommand{\Cdim}{\mathrm{Confdim}}
\newcommand{\Hdim}{\mathrm{Hausdim}}
\newcommand{\SL}{\mathrm{SL}}
\renewcommand{\a}{\alpha}
\renewcommand{\b}{\beta}
\newcommand{\g}{\gamma}
\newcommand{\D}{\Delta}
\renewcommand{\l}{\lambda}
\newcommand{\s}{\sigma}
\renewcommand{\S}{\Sigma}
\renewcommand{\t}{\tau}
\renewcommand{\Tilde}{\widetilde}
\title{Vanishing of the second $L^p$-cohomology group for most semisimple groups of rank at least 3}
\author{Antonio López Neumann}
\date{}
\begin{document}

\maketitle

\begin{abstract}
    We show vanishing of the second $L^p$-cohomology group for most semisimple algebraic groups of rank at least 3 over local fields. More precisely, we show this result for $\SL(4)$, for simple groups of rank $\geq 4$ that are not of exceptional type or of type $D_4$ and for all semisimple, non-simple groups of rank $\geq 3$. Our methods work for large values of $p$ in the real case and for all $p>1$ in the non-Archimedean case. This result points towards a positive answer to Gromov's question on vanishing of $L^p$-cohomology of semisimple groups for all $p>1$ in degrees below the rank. The methods consist in using a spectral sequence à la Bourdon-Rémy, adapting a version of Mautner's phenomenon from Cornulier-Tessera and concluding thanks to a combinatorial case-by-case study of classical simple groups.

    \vspace*{2mm} \noindent{2020 Mathematics Subject Classification: } 20F67, 20G07, 22E41, 43A15.

    \vspace*{2mm} \noindent{Keywords and Phrases: } Semisimple algebraic groups, $L^p$-cohomology, spectral sequence, Heintze groups, root systems.
\end{abstract}

\section*{Introduction}


$L^p$-cohomology is a natural quasi-isometry invariant introduced first in \cite{GKS-Lp-cohomology} and popularized by Gromov in \cite{gromov}. It is a rather fine one in the sense that it gives quasi-isometry invariants for every $p>1$ and thus uncountably many (and a priori independent) quasi-isometry invariants. It can be defined in different settings: we may talk about simplicial $\ell^p$-cohomology for simplicial complexes, de Rham $L^p$-cohomology for manifolds, asymptotic $L^p$-cohomology for metric spaces or continuous group $L^p$-cohomology for locally compact second countable groups. Comparison theorems give criteria to guarantee when these different versions coincide (usually by comparing them to asymptotic $L^p$-cohomology).

In this article we deal with continuous group $L^p$-cohomology $H_\mathrm{ct}^*(G, L^p(G))$ of a locally compact second countable group $G$, endowed with a \textit{left}-invariant Haar measure, with coefficients in the \textit{right} regular representation on $L^p(G)$. See Section \ref{Section:continuous cohomology} for precise definitions. The main technical advantage of this version of $L^p$-cohomology is that we dispose of more algebraic machinery, namely, we may use the Hochschild-Serre spectral sequence for semi-direct products \cite[IX]{borel-wallach}.

The first locally compact groups we may probably think about are Lie groups. Often the term Lie group refers to real Lie groups, though we may also consider Lie groups over non-Archimedean local fields. Here we will use the terms \textit{simple} or \textit{semisimple group} to refer to a semisimple algebraic group over a local field (Archimedean or not). The main motivation for this article is the following question by Gromov: he predicts a classical behaviour of $L^p$-cohomology of semisimple groups for every $p>1$, in degrees at most equal to the rank \cite{gromov}.
\begin{ques}
Let $G$ be a semisimple group of rank $r \geq 2$ over a local field $F$. \\
$(1)$ Do we have $H_\mathrm{ct}^l(G, L^p(G)) = \{ 0  \}$ for all $l = 1, \ldots, r-1$ and $p >1$? \\
$(2)$ Do we have $H_\mathrm{ct}^r(G, L^p(G)) \neq \{ 0  \}$ at least for some values of $p$? \\
$(3)$ Is the space $H_\mathrm{ct}^r(G, L^p(G))$ Hausdorff for all $p > 1$?  
\end{ques}

Question $(2)$ has been addressed in \cite{bourdon-remy-non-vanishing} for real groups and \cite{lopez-top-degree} for non-Archimedean groups. 
This article deals with question $(1)$. Let us review some known results towards a positive answer to this question. Most of these results concern vanishing in degree 1 for all $p>1$ for groups of rank $\geq 2$. 

First, Pansu proved this result in the real case in some unpublished notes in 1999. In there, he showed a trichotomy for a homogeneous manifold $M$: either the isometry group of $M$ is a compact extension of a solvable unimodular real Lie group, or $M$ is quasi-isometric to a homogeneous space of strictly negative sectional curvature, otherwise $L^p H_\mathrm{dR}^1(M) = \{ 0 \}$ for all $p > 1$. Later Bader, Furman, Gelander and Monod proved vanishing of the first continuous cohomology of (real or non-Archimedean) simple groups of rank $\geq 2$ \textit{acting by isometries} on some $L^p(X, \mu)$, where $(X, \mu)$ is a standard Borel space and $p > 1$ \cite{BFGM}. In fact their result also applies to semisimple groups whose simple factors have rank $\geq 2$. Later, Cornulier and Tessera extended Pansu's trichotomy to semisimple groups over fields of characteristic zero \cite{cornulier-tessera} (their trichotomy is stated only for groups of characteristic zero but their argument for vanishing for semisimple groups of higher rank also works in positive characteristic). The present work is heavily influenced by \cite{cornulier-tessera}.

Another impressive contribution is the one of Lafforgue in \cite{lafforgue-renforcement-T}. Motivated by obtaining obstructions to adapt his own proof of the Baum-Connes isomorphism for hyperbolic groups to the case of $\SL(3, \ze)$, he defines a strong rigidity condition, known today as Lafforgue's Strong Property $(T)$, that (if proven for a large enough class of Banach spaces) implies an affirmative answer to Gromov's question in degree 1. He showed that simple connected algebraic groups containing $\SL(3, \re)$ have this property on Hilbert spaces \cite[2.1]{lafforgue-renforcement-T} and that groups containing $\SL(3, F)$, where $F$ is a non-Archimedean local field, have this property on all Banach spaces of type $>1$ \cite[0.3]{lafforgue-Fourier-rapide}. The main novelty for us is that this condition implies vanishing of the first continuous cohomology group for \textit{non-isometric} actions on Banach spaces (more precisely, for representations of small exponential growth on uniformly convex Banach spaces).

Bourdon and Rémy deal with vanishing of $L^p$-cohomology of real simple Lie groups in higher degrees \cite{bourdon-remy-vanishings}. They show that for some real simple Lie groups they call admissible, there are constants $p(k) > 1$ for every degree $k$ such that there is vanishing of $L^p$-cohomology in degree $k$ for all $1<p<p(k)$. Poincaré duality allows them to extend this result to large values of $p$, at least for large values of $k$ (in particular, this duality argument does not concern degrees below the rank). Their methods consist in proving a suitable version of the Hochschild-Serre spectral sequence for $L^p$-cohomology and invoke Pansu's description of the $L^p$-cohomology of the real hyperbolic space \cite{pansu08}. In fact their proof also applies to (non-admissible) real Lie groups, but the conditions on the constant $p(k)$ are much more restrictive (in this case the constant $p(k)$ tends to 1 for fixed $k$ when the rank goes to infinity).

Initially, we wanted to prove a statement similar to that of Bourdon and Rémy in the non-Archimedean case. It turned out that our methods also apply to the real case, but only for large values of $p$. The result we prove is the following.

\begin{thm}\label{Vanishing-in-degree-2-Intro} (see Theorems \ref{Vanishing for semisimple non-simple  groups} and \ref{Vanishing in degree 2} in the text)
Let $F$ be a local field and suppose that $G$ is either: \\
$\bullet$ $\SL(4, F)$, \\
$\bullet$ a simple group over $F$ of rank $r \geq 4$ that is not of type $D_4$ and is not of exceptional type, \\
$\bullet$ or a semisimple, non-simple group over $F$ of rank $r \geq 3$. \\
Then there exists a constant $p(G) \geq 1$ such that for all $p > p(G)$:
\begin{equation*}
    H_\mathrm{ct}^2(G, L^p(G)) = \{ 0  \}.
\end{equation*}
Moreover, when $F$ is non-Archimedean we have $p(G) = 1$.
\end{thm}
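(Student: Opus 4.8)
The plan is to separate the two families and to dispatch the semisimple, non-simple groups first, since there the conclusion follows from the already known degree-one vanishing. Write such a $G$ as a direct product $G = G_1 \times \cdots \times G_k$ of simple factors with $k \geq 2$ and $\sum_i \mathrm{rk}(G_i) = r \geq 3$. I would establish a Künneth formula for continuous $L^p$-cohomology of a direct product, the coefficient module being $L^p(G) = L^p(G_1) \,\widehat{\otimes}\, \cdots \,\widehat{\otimes}\, L^p(G_k)$. In degree $2$ the contributions are of three shapes: $H_\mathrm{ct}^0 \otimes H_\mathrm{ct}^2$, $H_\mathrm{ct}^2 \otimes H_\mathrm{ct}^0$ and $H_\mathrm{ct}^1 \otimes H_\mathrm{ct}^1$ (after grouping the factors into two blocks). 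The regular representation of a non-compact group has no nonzero invariant $L^p$-vector, so $H_\mathrm{ct}^0(G_i, L^p(G_i)) = \{0\}$ and every term containing an $H^0$ vanishes. For the remaining $H_\mathrm{ct}^1 \otimes H_\mathrm{ct}^1$ term, order the factors so that $\mathrm{rk}(G_1) \geq \mathrm{rk}(G_2) \geq \cdots$; then $r \geq 3$ forces $\mathrm{rk}(G_1) \geq 2$, and the degree-one vanishing of Cornulier-Tessera (or of Bader-Furman-Gelander-Monod) gives $H_\mathrm{ct}^1(G_1, L^p(G_1)) = \{0\}$ for every $p > 1$. Hence $H_\mathrm{ct}^2(G, L^p(G)) = \{0\}$ with $p(G) = 1$.

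For the simple groups the argument is genuinely homological. Fix a maximal split torus $A$ of rank $r$, a minimal parabolic $P = MAN$ with $M$ compact, and the solvable group $S = A \ltimes N$, where $N$ is generated by the positive root subgroups. Following Bourdon-Rémy, I would set up a Hochschild-Serre spectral sequence for $L^p$-cohomology reducing the vanishing of $H_\mathrm{ct}^2(G, L^p(G))$ to a low-degree statement about $S$, and then apply the Hochschild-Serre spectral sequence of the extension $1 \to N \to S \to A \to 1$,
\[
E_2^{i,j} = H_\mathrm{ct}^i\big(A,\, H_\mathrm{ct}^j(N, L^p(S))\big) \;\Longrightarrow\; H_\mathrm{ct}^{i+j}(S, L^p(S)).
\]
The structural input is that $A$ acts on each coefficient module $H_\mathrm{ct}^j(N, L^p(S))$ preserving a decomposition into weight components, the weights being the characters of $A$ occurring in $\Lambda^j \mathfrak{n}^{*}$ twisted by the modular character; degree $2$ then receives contributions only from $E_2^{2,0}$, $E_2^{1,1}$ and $E_2^{0,2}$.

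The core of the proof is an $L^p$-Mautner phenomenon in the spirit of Cornulier-Tessera: if a weight $\chi$ occurring in $H_\mathrm{ct}^j(N, L^p(S))$ is nontrivial and is dominated by a contracting one-parameter direction inside $A$, then its component contributes nothing to $H_\mathrm{ct}^i(A, -)$, so the spectral sequence terms are carried by the zero-weight (uncontracted) part. I would encode this as a vanishing criterion stated purely in terms of the multiset of weights of $\Lambda^{\le 2}\mathfrak{n}^{*}$ and the realization of $S$ as an iterated extension of rank-one Heintze pieces, evaluating the residual zero-weight contributions with Pansu's computation of the $L^p$-cohomology of the rank-one spaces. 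This is what produces a finite threshold $p(G)$ in the Archimedean case and the value $p(G) = 1$ in the non-Archimedean case, where the relevant rank-one spaces are trees and Pansu's critical intervals degenerate.

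The main obstacle, and the source of the hypotheses in the statement, is verifying the resulting combinatorial criterion case by case over the classical root systems $A_r$, $B_r$, $C_r$, $D_r$. Concretely, killing the degree-$2$ terms requires, for every pair of roots spanning a rank-two subsystem ($A_1 \times A_1$, $A_2$ or $B_2$), an auxiliary root in the ambient system producing a torus element that simultaneously contracts the corresponding weight of $\Lambda^2 \mathfrak{n}^{*}$. This is exactly where $r \geq 4$ is needed: there must be enough independent root directions to dominate every such weight. The exceptional types and $D_4$ are excluded because their root systems fail to supply the required auxiliary roots for every pair — in $D_4$ the triality symmetry makes the three outer directions interchangeable and leaves certain weights uncovered. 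The rank-three group $\SL(4)$, of type $A_3$, is handled separately by hand because $A_3$ still admits the needed configurations despite its low rank. I expect the genuinely laborious part to be organizing this root-by-root bookkeeping uniformly and pinning down that the excluded list is precisely the set of true exceptions rather than artifacts of the method.
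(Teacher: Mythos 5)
Your proposal diverges from the paper in both halves, and each half has a genuine gap. For the semisimple, non-simple case, the Künneth route is not available: a Künneth formula for continuous group cohomology with Banach coefficients such as $L^p(G_1)\,\widehat{\otimes}\,L^p(G_2)$ would require, at minimum, Hausdorffness of the cohomology spaces involved (precisely Gromov's open question $(3)$) together with a completed-tensor-product machinery that does not exist in the literature, and the paper never goes near products. Worse, your bookkeeping step is simply false: $r \geq 3$ with $k \geq 2$ does \emph{not} force a simple factor of rank $\geq 2$ --- take $G = \SL(2,F)^3$, of rank $3$ with all factors of rank $1$ --- so the $H^1_\mathrm{ct}\otimes H^1_\mathrm{ct}$ term cannot be killed by BFGM or Cornulier--Tessera applied to a simple factor, and rank-one factors have nonzero $H^1_\mathrm{ct}(\cdot, L^p)$ for large $p$. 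The paper instead runs the \emph{same} maximal-parabolic reduction as in the simple case: it picks a simple root $\g$ so that a semisimple block of the Levi factor commutes with the Heintze group $H_\g$, whence $|||\pi_0(g)|||_{V_p} = 1$ and the modular function $\D_{R^\g}$ alone produces the two contracting elements needed for the vanishing criterion.

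For the simple case, your plan hinges on a weight decomposition of $H^j_\mathrm{ct}(N, L^p(S))$ under the torus $A$ with weights read off from $\Lambda^j \mathfrak{n}^*$. That is a van Est/Kostant heuristic valid for finite-dimensional coefficients; for Banach-valued continuous cohomology these modules are infinite-dimensional, no such decomposition is known, and $H^j_\mathrm{ct}(N, L^p(S))$ for the full unipotent radical of a \emph{minimal} parabolic is not computable by current techniques --- which is exactly why the paper uses a \emph{maximal} parabolic, so that the fiber is a single Heintze group $H_\g$ whose $L^p$-cohomology is Hausdorff and concentrated in degree $1$ (quasi-isometric to a tree non-Archimedeanly; Pansu/Bourdon/Tessera inputs for $p > \Cdim(\partial H_\g)$ in the real case), collapsing the spectral sequence to the single term $H^1_\mathrm{ct}(R^\g, L^p(R^\g, V_p))$. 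The decisive ingredient your sketch lacks is the quantitative operator-norm bound $|||\pi_0(g)|||_{V_p}^p \leq C\,||g^{-1}||_\g^{Q_\g}$, with $Q_\g$ the Hausdorff dimension of the Carnot--Carath\'eodory metric on $U_\g$; Mautner-type propagation alone cannot certify that any direction is contracting, because the $A^\g$-action on $V_p$ has large exponential growth and the whole battle is $\D_{R^\g}$ versus this bound. Finally, your explanation of the exclusions is not the actual mechanism: $D_4$ and the exceptional types fail not through triality or uncovered rank-two pairs, but because the numerical inequality $D_{\s_i}^{-1}\sum n_{\s_i}(\a)m(\a) > Q_\g$ fails there (for split type $D_r$ it first holds at $r = 5$, with only equality at $r = 4$), while $\SL(4)$ is not an ad hoc case: it satisfies strict inequality for $\s_1$ and equality for $\s_2$, the latter yielding a $\pi_p$-bounded (not contracting) element handled by the bounded variant of the Mautner lemma.
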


Combining our results with \cite[1.2]{lopez-top-degree}, we obtain a complete description of the vanishings of $L^p$-cohomology of $\SL(4, F)$, where $F$ is a non-Archimedean local field, for all $p>1$ and in all degrees. This gives a positive answer to Gromov's questions $(1)$ and $(2)$ for this group.

\begin{cor}
    Let $G = \SL(4,F)$, where $F$ is a non-Archimedean local field. We have for any $p>1$:
    \begin{equation*}
         H_\mathrm{ct}^k(G, L^p(G)) \neq \{ 0  \} \textit{ if and only if } k = 3.
    \end{equation*}
\end{cor}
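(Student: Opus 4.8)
The plan is to determine $H_\mathrm{ct}^k(G, L^p(G))$ degree by degree, exploiting that $G = \SL(4,F)$ has rank $r = 3$; the asserted equivalence then amounts to showing that $k = 3$ is the unique degree in which the cohomology fails to vanish, for \emph{every} $p > 1$. The five ingredients I would assemble are: triviality of the invariant vectors in degree $0$, the degree-$1$ vanishing for higher rank groups, Theorem \ref{Vanishing-in-degree-2-Intro} in degree $2$, the top-degree non-vanishing of \cite[1.2]{lopez-top-degree} in degree $3$, and a dimension bound in degrees $\geq 4$.

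For $k = 0$, I would note that $H_\mathrm{ct}^0(G, L^p(G))$ is the space of right-$G$-invariant vectors of $L^p(G)$, which is $\{0\}$ since $G$ is non-compact and unimodular and $L^p(G)$ contains no nonzero invariant function for $p < \infty$. For $k = 1$, I would invoke vanishing of the first continuous $L^p$-cohomology in higher rank for all $p > 1$: as $\SL(4,F) \supset \SL(3,F)$, Lafforgue's strong Banach property $(T)$ in the non-Archimedean case, valid on all Banach spaces of type $> 1$ \cite[0.3]{lafforgue-Fourier-rapide}, applies to the isometric regular representation on $L^p(G)$, which is uniformly convex and of type $\min(p,2) > 1$; hence $H_\mathrm{ct}^1(G, L^p(G)) = \{0\}$ (one could alternatively cite \cite{BFGM} or \cite{cornulier-tessera}). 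For $k = 2$, Theorem \ref{Vanishing-in-degree-2-Intro} applies directly to $\SL(4,F)$, and since $F$ is non-Archimedean the constant $p(G) = 1$, so the vanishing holds for all $p > 1$.

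It remains to handle $k = 3$ and $k \geq 4$. The non-vanishing $H_\mathrm{ct}^3(G, L^p(G)) \neq \{0\}$ for all $p > 1$ is precisely the top-degree statement \cite[1.2]{lopez-top-degree}, since $3 = r$ is the rank. For $k \geq 4$, I would use the comparison theorems to identify the continuous group $L^p$-cohomology of $G$ with the simplicial $\ell^p$-cohomology of the Bruhat--Tits building $X$ of $\SL(4,F)$, which is a contractible affine building of dimension equal to the rank $r = 3$; the cohomology of a complex vanishes above its dimension, yielding $H_\mathrm{ct}^k(G, L^p(G)) = \{0\}$ for $k > 3$. Collating the five ranges gives the equivalence.

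I expect the main obstacle to be the careful justification of the degree-$\geq 4$ vanishing: one must verify that the comparison between continuous group $L^p$-cohomology and the simplicial $\ell^p$-cohomology of the building is available in this range and for the coefficient module $L^p(G)$, so that the dimension bound of $X$ can legitimately be transported to the continuous cohomology. The degree-$0$ and degree-$1$ inputs are standard, while degrees $2$ and $3$ are exactly the two substantial results being combined here.
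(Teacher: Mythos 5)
Your proof is correct and follows essentially the same route as the paper, which obtains this corollary by combining Theorem \ref{Vanishing-in-degree-2-Intro} in degree $2$ (with $p(G)=1$ since $F$ is non-Archimedean) with the top-degree non-vanishing of \cite[1.2]{lopez-top-degree} in degree $3=r$, together with the standard vanishing in degree $0$ and in degree $1$ (where, $G$ being unimodular, the right regular representation is isometric, so \cite{BFGM} or \cite{cornulier-tessera} already suffice without invoking Lafforgue) and the vanishing above the rank. Your worry about degrees $k\geq 4$ is resolved exactly as you suggest: $G$ is quasi-isometric to its Bruhat--Tits building, a contractible, bounded-geometry, $3$-dimensional complex, and the comparison theorems between continuous group, asymptotic and simplicial $\ell^p$-cohomology (cf.\ \cite[Theorems 6.5 and 6.7]{bourdon-remy-non-vanishing}) transport the trivial dimension bound to $H^k_\mathrm{ct}(G,L^p(G))$.
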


For real admissible simple Lie groups (those for which the results in \cite{bourdon-remy-vanishings} apply) our result is complementary to that of \cite{bourdon-remy-vanishings}. In degree 2, their result gives vanishing for small values of $p$, say for $1 < p \leq q(G)$ for some constant $q(G)$. A priori, there is an interval $(q(G) , p(G)]$ for which none of our results show vanishing. The funny thing is that, for most admissible simple Lie groups, this interval disappears when the rank is large enough.

\begin{cor}\label{Uniform-Vanishing-for-Admissible-Groups} (see Corollary \ref{Uniform vanishing for admissible groups} in the text)
Let $G$ be an admissible real simple Lie group of rank $r \geq 8$ that is not of type $B_r$. Then we have for all $p > 1$
\begin{equation*}
    H_\mathrm{ct}^2(G, L^p(G)) = \{ 0  \}.
\end{equation*}
\end{cor}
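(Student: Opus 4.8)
The plan is to sandwich the degree-two $L^p$-cohomology between two complementary vanishing results. On one side, Theorem \ref{Vanishing in degree 2} provides an explicit constant $p(G) \ge 1$ with $H^2_{\mathrm{ct}}(G, L^p(G)) = \{0\}$ for every $p > p(G)$; on the other, the work of Bourdon and Rémy \cite{bourdon-remy-vanishings} provides a constant $q(G) > 1$ with the same vanishing for every $1 < p \le q(G)$, valid precisely because $G$ is admissible. Since admissibility is among the hypotheses, both inputs are available, and it suffices to prove that the two ranges overlap, i.e. that
\begin{equation*}
    p(G) \le q(G).
\end{equation*}
Once this inequality is established, the union of the two ranges exhausts $(1, \infty)$ and the conclusion $H^2_{\mathrm{ct}}(G, L^p(G)) = \{0\}$ holds for all $p > 1$.

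First I would record the two constants explicitly. For $q(G)$ I would unwind the Bourdon--Rémy argument in degree two: their spectral sequence reduces the computation to the $L^p$-cohomology of the rank-one symmetric spaces attached to the restricted roots of $G$, whose degree-two cohomology is governed by Pansu's computation for real hyperbolic space \cite{pansu08}. The resulting threshold is of the form $q(G) = \tfrac{N(G)-1}{2}$, where $N(G)$ is the dimension of the boundary of the largest such rank-one factor; crucially, for admissible $G$ this quantity grows with the rank $r$. For $p(G)$ I would read off the value produced by the combinatorial case-by-case analysis in the proof of Theorem \ref{Vanishing in degree 2}: it is the ratio extracted from the restricted root system that controls the Heintze subgroup surviving the Mautner reduction, and --- this is the key structural point --- it decreases as $r$ grows. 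Thus $q(G) - p(G)$ is an increasing function of $r$ for each fixed type.

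The heart of the corollary is then the numerical comparison $p(G) \le q(G)$, carried out type by type for the admissible classical families occurring in the statement, namely $A_r$, $C_r$ and $D_r$. I would express both sides as explicit functions of $r$ for each type, use the monotonicity of $q(G) - p(G)$ in $r$, and reduce the whole verification to the single base case $r = 8$ in each family. The main obstacle, and the reason for the two restrictions in the statement, lives exactly here. For small rank the two constants have not yet crossed, which forces $r \ge 8$; and for type $B_r$ the asymmetry between long and short restricted roots inflates $p(G)$ relative to $q(G)$, so the crossing never occurs and the type must be excluded. I would therefore present the $B_r$ computation separately to pinpoint where the inequality breaks down, confirming that both hypotheses of the corollary are forced by this method.
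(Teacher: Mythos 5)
Your strategy is exactly the paper's: combine Theorem \ref{Vanishing in degree 2} (vanishing for $p > p(G)$, where one can take $p(G) = Q_\gamma$ for the root $\gamma$ chosen in Table \ref{table: Root systems and the constant Di}) with Theorem \ref{Uniform vanishing for admissible groups} of Bourdon--R\'emy (vanishing for $p \le q(G) = \frac{d-1}{2} = \max\{Q_\sigma : \sigma \text{ good}\}/2$), and verify the overlap condition $p(G) \le q(G)$ type by type. So the outline is correct and matches the paper. However, two of your ``key structural'' claims are wrong as stated, even though the conclusion survives. First, $p(G)$ does \emph{not} decrease as $r$ grows: it equals $Q_\gamma = \sum_{\alpha \in \Phi_{n_\gamma>0}} n_\gamma(\alpha) m(\alpha)$, which grows \emph{linearly} in $r$ for every choice made in the paper (e.g.\ $Q_\gamma = rd^2$ for $\SL_{r+1}(D)$). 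The reason the two ranges eventually overlap is entirely on the other side: for types $A_r$, $C_r$ and $D_r$ there exists a good root $\sigma$ (all coefficients $n_\sigma(\alpha)=1$ on $\Phi_{n_\sigma>0}$) with $Q_\sigma$ growing \emph{quadratically} in $r$, so $q(G)-p(G)$ is quadratic-minus-linear. Your monotonicity-plus-base-case reduction still works once this is corrected, but as written the mechanism you flag as the heart of the argument is backwards.

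Second, your diagnosis of the $B_r$ exclusion is off. The failure is not that long/short root asymmetry ``inflates $p(G)$''; it is that in $B_r$ the root $\gamma = e_1 - e_2$ is the \emph{unique} good root, so $q(G) = Q_\gamma/2$ while $p(G) = Q_\gamma$, and the required inequality degenerates to $Q_\gamma \le Q_\gamma/2$, which can never hold --- i.e.\ $q(G)$ is deflated (only linear growth available), not $p(G)$ inflated. One further small caution: the uniform bound $r \ge 8$ in the statement is a clean sufficient threshold covering all the families; the sharp thresholds from the tables are $r \ge 7$ for $\SL$ and $\mathrm{Sp}_{2r}$ and $r \ge 8$ for the remaining families, so your ``single base case $r=8$'' check should really be run at the first rank where each family's inequality holds, with the explicit formulas for $Q_\gamma$ from Table \ref{Computation of the Hausdorff dimension and the LHS} and for $\max\{Q_\sigma\}$ from the tables of \cite{bourdon-remy-vanishings}.
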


We outline the proof of Theorem \ref{Vanishing-in-degree-2-Intro}. First, we use quasi-isometric invariance in order to identify $H_\mathrm{ct}^2(G, L^p(G))$ to $H_\mathrm{ct}^2(P, L^p(P))$, where $P$ is a maximal parabolic subgroup. This parabolic subgroup has a Levi decomposition $P = MSU$. The version of the Hochschild-Serre spectral sequence from \cite{bourdon-remy-vanishings} allows us to identify the space $H_\mathrm{ct}^2(P, L^p(P))$ (at least as an abstract vector space) to the first continuous cohomology group $H_\mathrm{ct}^1(M, L^p(M, V_p))$ of the Levi factor $M$ with coefficients in some Banach-valued $L^p$-space $L^p(M, V_p)$. The Banach space $V_p$ is in fact $H_\mathrm{ct}^1(SU, L^p(SU))$. The main technical problem comes from the fact that the continuous $M$-module $L^p(M, V_p)$ has relatively large exponential growth, forbidding us to (directly) invoke Lafforgue's strong property $(T)$ and obtain the desired vanishing. 

We amend this by passing to a cocompact, non-unimodular, solvable group $R$ and creating contractions thanks to its modular function $\D_R$. We then adapt techniques from \cite{cornulier-tessera} to show some non-isometric version of Mautner's phenomenon for the $R$-module $L^p(R, V_p)$. The upshot is that vanishing of $H_\mathrm{ct}^1(R, L^p(R, V_p))$ follows from the presence of enough contractions. Verifying this condition can be done easily in the semisimple, non-simple case (see Theorem \ref{Vanishing for semisimple non-simple  groups} in the text).

The non-trivial part consists in showing the existence of such contractions in the simple case. This is a battle between the exponential dilation of a maximal torus $A$ in $M$ acting on $V_p$ and the exponential contraction of $\D_R$ in certain directions. To show that the contraction of $\D_R$ wins this battle for the groups in the statement of Theorem \ref{Vanishing-in-degree-2-Intro}, we first control the operator norms of the action of A on $V_p$ by some term that can be written explicitly in combinatorial terms. This step requires large-scale geometric considerations, as our bound depends on the Hausdorff dimension of the Carnot-Carathéodory metric on $U$. Then, using the classification of semisimple algebraic groups over local fields, we reduce the existence problem of contracting elements to a combinatorial case-by-case study of root systems with multiplicities. The main point in the combinatorial part of the proof is that, for the infinite families of root systems ($A_r, B_r, C_r, BC_r$ and $D_r$) there exists always a choice of maximal parabolic subgroup such that our control of the exponential dilation grows linearly in the rank and the exponential contraction of the modular function grows quadratically in the rank (at least in well-chosen directions). This a priori asymptotic heuristic works quite fast: starting from $ r\geq 3$ for $A_r$, from $r\geq 4$ for $B_r, C_r$ and $BC_r$ and from $r \geq 5$ for $D_r$. Our current estimates do not seem to create contractions for exceptional groups.

We do not exclude the possibility that this result could be obtained using Lafforgue's Strong Property $(T)$. Indeed, Lafforgue extends his results from $M = \SL(3, F)$ to a higher rank semisimple group $G$ containing $M$ by restricting a representation of $G$ to $M$ and using that $M$ has strong property $(T)$. This does not optimize the constants in the exponential growth of the representation as they are the same as for $\SL(3,F)$. We expect that showing Lafforgue's strong property $(T)$ directly in $G$ improves the constants so that we can treat the representation appearing after applying the spectral sequence, at least starting from a certain rank. Nevertheless, our constants seem to be slightly better since our method already works when the Levi factor $M$ is $\SL(3,F)$ inside $G = \SL(4,F)$.


The article is organized as follows. Section 1 contains standard preliminaries concerning algebraic groups, continuous group cohomology and $L^p$-cohomology. We also recall the Hochschild-Serre spectral sequence for $L^p$-cohomology as given in \cite{bourdon-remy-vanishings}. Section 2 studies the first continuous cohomology group with coefficients in Banach-valued $L^p$-spaces. Using contraction arguments, we show a vanishing criterion for this space.
We directly apply this criterion to semisimple, non-simple groups of rank $\geq 3$. Sections 3 and 4 are the technical heart of this article. In Section 3 we study amenable hyperbolic groups with homotheties and obtain our concrete estimate for cohomology growth. Section 4 explains how to find contracting elements using root systems. We use the classification of simple algebraic groups over local fields, study in detail each group to search for contractions and sum up our results in tables. This proves Theorem \ref{Vanishing-in-degree-2-Intro}.
We also prove Corollary \ref{Uniform-Vanishing-for-Admissible-Groups} as a consequence of our combinatorial study.

\paragraph{Acknowledgements}

The present work is part of the author's PhD thesis.
I thank my advisors, Marc Bourdon and Bertrand Rémy, for their time, their constant support and regular communication. I also want to thank Mikael de la Salle for his attentive reading and for simplifying some arguments in Section 2, the current version of the article implements these simplifications.

\tableofcontents

\section{Continuous cohomology and algebraic groups}

This section establishes the setting of the article. We also fix notation for subsequent sections. It contains standard preliminaries of algebraic flavor, more precisely, algebraic groups and continuous cohomology. We also recall the spectral sequence for continuous group $L^p$-cohomology.

\subsection{Lie theoretic notions and Heintze groups}

These paragraphs collect the necessary Lie theoretic vocabulary and fix notations for subsequent sections. We follow standard references on algebraic groups, such as \cite{borel-algebraic-groups} or \cite[Chapter 0]{margulis-book}.

\paragraph{Roots and root spaces}

Let $G$ be a semisimple algebraic group over a local field $F$ (Archimedean or not) of split rank $r \geq 1$. Denote by $\ag$ its Lie algebra. Fix a maximal $F$-split torus $S$ in $G$ and denote by $X(S)$ the group of $F$-characters of $S$. Multiplication in $X(S)$ will be denoted additively. The \textit{restricted root system} $\Phi = \Phi_F(S, G)$ is the set of nonzero $F$-characters $\a \in X(S)$ such that the space
\begin{equation*}
    \ag_\a = \{ X \in \ag \, | \,\forall s \in S, \Ad(s) X = \a(s) X \}
\end{equation*}
is nonzero \cite[21.1]{borel-algebraic-groups}. An element $ \a \in \Phi$ is called a \textit{root relative to $F$} or a \textit{root}. The set $\Phi$ is a root system in the usual sense \cite[0.26]{margulis-book}. The integer $\dim \ag_\a$ will be called the \textit{multiplicity} of the root $\a$, and will be denoted $m(\a)$.

Recall that the root system $\Phi$ is said to be \textit{reduced} if for all $\a \in \Phi$, the element $2\a$ is not a root. The only family of non-reduced root systems consists of root systems of type $BC_n$ for $n\geq 2$. The space $\ag_\a$ is a commutative Lie subalgebra when $2\a \notin \Phi$ (this follows from $[\ag_\a, \ag_\b] \subseteq \ag_{\a + \b}$), but it may not be a Lie subalgebra when $\a, 2 \a \in \Phi$. We amend this by considering the space $\ag_{(\a)} : = \ag_\a \oplus \ag_{2\a}$, which is always a Lie subalgebra of $\ag$ for $\a \in \Phi$ \cite[21.7]{borel-algebraic-groups}. If we define $\Phi_{nd}$ to be the set of roots $\a \in \Phi$ such that $\a / 2$ is not a root, then the \textit{root space decomposition} of $\ag$ may be written:
\begin{equation*}
    \ag = \ag_0 \oplus \bigoplus_{\a \in \Phi_{nd}} \ag_{(\a)}.
\end{equation*}
where each summand is a Lie subalgebra \cite[21.7]{borel-algebraic-groups}.

Let $U_{(\a)}$ be the unique unipotent closed Zariski-connected Lie subgroup with Lie algebra $\ag_{(\a)}$ normalized by the centralizer $Z(S)$ of the torus $S$ \cite[21.9 (i)]{borel-algebraic-groups}. 

If $ \a \in \Phi$ and $2 \a \notin \Phi$, then $ U_\a : = U_{(\a)}$ is abelian and $F$-isomorphic (as an algebraic group) to $\ag_\a$ \cite[21.20]{borel-algebraic-groups}. If $\theta_\a :\ag_\a \to U_{\a}$ denotes such an isomorphism, then the action by conjugation of some $s \in S$ on $U_\a$ becomes the homothety of ratio $\a(s)$ on $\ag_\a$, that is: $s \theta_\a(X) s^{-1} = \theta_\a ( \a(s) X) $ for all $X \in \ag_\a$ \cite[3.17]{borel-tits-IHES}.

If $\a , 2\a \in \Phi$ then $U_{(\a)}$ is metabelian, its center is $U_{2\a}$ \cite[21.10 (2)]{borel-algebraic-groups} and is $F$-isomorphic (as a variety) to the product $U_{(\a)} / U_{2\a} \times U_{2\a}$ \cite[21.20 Proof of (i)]{borel-algebraic-groups}. The quotient $U_\a : = U_{(\a)} / U_{2\a}$ is $F$-isomorphic (as a variety) to $\ag_\a$. If $\theta_\a :\ag_\a \to U_{\a}$ denotes this isomorphism, then the action by conjugation of some $s \in S$ on $U_\a$ becomes again the homothety of ratio $\a(s)$ on $\ag_\a$ \cite[3.17]{borel-tits-IHES}. Notice that under these identifications, the action of $s \in S$ on $U_{(\a)}$, seen as the product $U_\a \times U_{2\a}$, is a homothety of ratio $\a(s)$ on $U_\a$ and a homothety of ratio $2\a(s)$ on $U_{2\a}$.

In the real case, the isomorphism $\theta_\a$ is just the exponential mapping.

\paragraph{Parabolic subgroups and Levi decomposition}

We fix a choice of simple roots $\S$ inside $\Phi$, denote by $\Phi^+$ the corresponding positive roots. For any $\a \in \Phi$ and $\s \in \S$, set $n_\s(\a) \in \ze$ so that 
\begin{equation*}
    \a = \sum_{\s \in \S} n_\s(\a) \s.
\end{equation*}
For $I \subseteq \S$, denote by $\Phi_I$ the set of roots in $\Phi$ which are linear combinations of simple roots in $I$ ($\Phi_I$ is a root system in its own right) and $\Phi_I^+ = \Phi_I \cap \Phi^+$. In what follows we will introduce many notations for subgroups of $G$ depending on a subset $I$ of $\S$. If $I$ is in subscript it means that the corresponding subgroup contains in some way the roots in $I$, if $I$ is in superscript it means that the corresponding subgroup avoids in some way the roots in $I$.

For $I \subseteq \S$, the \textit{standard parabolic subgroup} $P_I$ of type $I$ is the subgroup of $G$ with Lie algebra 
\begin{equation*}
    \ap_I = \ag_0 \oplus \bigoplus_{\a \in \Phi_I} \ag_\a \oplus \bigoplus_{\a \in \Phi^+ \setminus \Phi_I^+} \ag_\a
\end{equation*}
\cite[4.2]{borel-tits-IHES}. Let $S^I$ denote the $F$-split subtorus of $S$ of rank $(r - |I|)$ defined by  $ S^{I}= (\bigcap_{\g \in I} \Ker \, \g)^{0}$. 
The group $P_I$ admits a \textit{Levi decomposition} $P_I = Z_I \ltimes U^I$, where $Z_I = Z_G(S^{I})$ denotes the centralizer of the torus $S^{I}$ in $G$ and $U^I$ is the unipotent radical of $P_I$ \cite[4.2]{borel-tits-IHES}. We will briefly present some properties of both factors in this decomposition.

The group $Z_I = Z_G(S^{I})$ is called the \textit{Levi factor} of $P_I$ and has Lie algebra $\ag_0 \oplus \bigoplus_{\a \in \Phi_I} \ag_\a$. It is reductive \cite[2.15 d)]{borel-tits-IHES}, and so it may be decomposed as the almost direct product $M_I T^{I}$ where $M_I = [ Z_I, Z_I ]$ is a semisimple group of split rank $|I|$ and $T^{I}$ is the connected center of $Z_I$. The group $T^I$ is defined over $F$ \cite[2.15 a)]{borel-tits-IHES} and is the almost direct product of $S^I$ with a (compact) anisotropic subtorus defined over $F$ \cite[1.8]{borel-tits-IHES}. The decomposition $P_I = (M_I T^{I}) \ltimes U^{I}$ is sometimes called the \textit{Langlands decomposition} of $P_I$. 

\begin{rem}
The action by conjugation of $M_I$ on $U^{I}$ preserves the volume of $U^{I}$. Indeed, volume dilation of this action defines a character of $M_I$, but $M_I$ has no nontrivial characters because it is a semisimple group.
\end{rem}

On the other hand, the group $U^{I}$ has Lie algebra $ \au^{I} = \bigoplus_{\a \in \Phi^+ \setminus \Phi_I^+} \ag_\a$ and is $F$-isomorphic (as a variety) to the direct product of the corresponding root groups \cite[21.9 (ii)]{borel-algebraic-groups}:
\begin{equation*}
    \prod_{\a \in \Phi_{nd}^+ \setminus (\Phi_I)_{nd}^+} U_{(\a)} = \prod_{\a \in \Phi^+ \setminus \Phi_I^+} U_{\a}.
\end{equation*}



\paragraph{Heintze groups and Iwasawa decomposition}

We are mostly concerned with (proper) \textit{maximal} parabolic subgroups, that is, the case where $I = \S \setminus \{ \g\}$ for some $\g \in \S$. In this case, $ S_\g : = S^I \cong F^*$ and $\Phi_{n_\g > 0} : = \Phi^+ \setminus \Phi_{\S \setminus \{ \g\}}^+$ is exactly the set $\{ \a \in \Phi, n_\g(\a) > 0 \}$. Conjugation by some $s \in S_\g$ on $x \in U_\a$, where $x =\theta_\a(X)$ and $\theta_\a$ is our $S$-equivariant identification of $U_\a$ with $\ag_\a$, becomes:
\begin{equation*}
    s. x  : = s  x  s^{-1} = \theta_\a(\g(s)^{n_\g(\a) }X).
\end{equation*}
Thus a fixed $s \in S_\g$ either contracts or dilates all the root groups $U_\a$ for $\a \in \Phi_{n_\g > 0}$ (depending on the sign of $\log |\g(s)|$). In the terminology of \cite{caprace-cornulier-tessera-monod}, $S_\g$ acts on $U_\g : = U^{I}$ by confining automorphisms (the notation $U_\g$ is ambiguous as it may also refer to the root group $U_\g$ integrating $\ag_\g$, but in practice we will never use this notation in this sense).
\begin{defn}
    The group $H_\g := S_\g \ltimes U_\g$ is the \textit{Heintze group} associated to the simple root $\g$.
\end{defn}

By \cite[4.6]{caprace-cornulier-tessera-monod}, the solvable group $H_\g$ is an amenable, non-unimodular, Gromov-hyperbolic locally compact group.

On the other hand, the semisimple part $M^\g : = M_{I}$ of the group $P_I$ admits an \textit{Iwasawa decomposition} $K^\g A^\g N^\g$, where $K^\g$ is a maximal compact subgroup of $M^\g$, the group $A^\g$ is a maximal $F$-split torus in $M^\g$ (hence a group-theoretic supplementary of $S_\g$ in $S$) and $N^{\g}$ is isomorphic to the product $\prod_{\a \in \Phi_{I}^+} U_\a$ and has Lie algebra $\bigoplus_{\a \in \Phi_{I}^+} \ag_\a$ (see \cite[IX. 1.3]{helgason} for the real case and \cite[2.6.11]{macdonald-spherical-functions} for the non-Archimedean case).

\begin{defn}
We write $R^\g := A^\g N^\g$.
\end{defn}

The main object of study of the subsequent sections is the semi-direct product $R^\g \ltimes H_\g$. By this we mean that we will study the group $H_\g$ and the action by conjugation of the group $R^\g$ on $H_\g$. 
To sum up the relations between the different groups in this section:
\begin{equation*}
    G \simeq_\mathrm{qi} P_I  \simeq_\mathrm{qi} M^\g \ltimes H_\g \simeq_\mathrm{qi} R^\g \ltimes H_\g,
\end{equation*}
where the metrics on each group are just the metrics induced from $G$. Moreover, all of these quasi-isometries are cocompact inclusions.

\subsection{Continuous group cohomology}\label{Section:continuous cohomology}

We define continuous cohomology of a locally compact second countable group following \cite[IX]{borel-wallach}.

Let $G$ be a locally compact second countable group. Then $G$ is a countable union of compact sets (for instance, because $G$ carries a proper metric \cite{struble}). Let $(\rho, V)$ be a continuous representation of $G$ (we also use continuous $G$-module as terminology) i.e. a morphism $\rho : G \to B(V)$ such that the map $G \times V \to V$ defined by $(g,v) \mapsto \rho(g)v$ is continuous, where $V$ is some locally convex topological vector space and $B(V)$ denotes continuous invertible operators on $V$. Here $V$ will always be at least a Fréchet space.

For $k \in \na$, we define the space $C^k(G, V)$ of $k$-cochains as the set of continuous maps from $G^{k+1}$ to $V$. Since $G$ is a countable union of compact sets, the space $C^k(G, V)$ equipped with the topology of uniform convergence on compact subsets is a Fréchet space. 

We define the differential $d_k : C^k(G, V) \to C^{k+1}(G, V)$ of a $k$-cochain $c$ as:
\begin{equation*}
    (d_k c)(g_0, \ldots, g_{k+1}) = \sum_{i = 0}^{k+1} (-1)^{i}c(g_0, \ldots, g_{i-1}, g_{i+1}, \ldots, g_{k+1}).
\end{equation*}
It satisfies $d_{k+1} \circ d_k = 0$.

The space $C^k(G, V)$ can be viewed as a continuous $G$-module, by endowing it with the action: \begin{equation*}
    (g.c)(g_0, \ldots, g_k) = \rho(g)( c( g^{-1}g_0, \ldots, g^{-1}g_k)).
\end{equation*}

We consider the space $ C^k(G, V)^G$ of invariants in $ C^k(G, V)$ with respect to this action. Notice that this is just the set of continuous $G$-equivariant maps from $G^{k+1}$ to $V$, when endowing $G^{k+1}$ with the diagonal action by left translation on each factor and $V$ with the action given by $\rho$. The differential $d_k$ maps $ C^k(G, V)^G$ into $C^{k+1}(G, V)^G$. We call $\ker( d_k |_{ C^k(G, V)^G})$ the space of \textit{$k$-cocycles} and denote it by $Z^k(G,\rho)$, we call $\Im ( d_{k-1}|_{C^{k-1}(G, V)^G})$ the space of \textit{$k$-coboundaries} and denote it $B^k(G,\rho)$. 

In homological algebra, group cohomology with values in a representation is defined as the cohomology of the complex of invariants of any resolution of the given representation. In this article we will not use other resolutions apart from the one we already defined (even though we invoke a result by \cite{bourdon-remy-vanishings} obtained using other resolutions), so the following definition of continuous group cohomology is enough for our purposes.

\begin{defn}
The \textit{$k$-th continuous cohomology group} (resp. \textit{$k$-th reduced continuous cohomology group}) of $G$ with coefficients in $(\rho,V)$ is the topological vector space:
\begin{equation*}
    H^k_\mathrm{ct} (G, \rho) := Z^k(G,\rho) / B^k(G,\rho) \quad ( \textrm{resp. }  \overline{H}^k_\mathrm{ct} (G, \rho) := Z^k(G,\rho) / \overline{B^k(G,\rho)} ).
\end{equation*}
\end{defn}
The space $\overline{H}^k_\mathrm{ct} (G, \rho)$ is the biggest Hausdorff quotient of $H^k_\mathrm{ct} (G, \rho)$. In particular these two spaces coincide exactly when $H^k_\mathrm{ct} (G, \rho)$ is Hausdorff.

\paragraph{Non-homogeneous cochains}

For concrete applications and particularly in degree 1, it is sometimes useful to view elements of $C^k(G,V)^G$ not as maps from $G^{k+1}$ to $V$, but as maps from $G^k$ to $V$. 

For $k= 1$, this gives the classical geometric interpretation of the first continuous cohomology group. We can identify $Z^1(G,\rho)$ with the space of continuous maps $b : G \to V$ satisfying the cocycle relation $b(gh) = b(g) + \rho(g)b(h)$. This space can in turn be identified with the space of continuous affine actions of $G$ on $V$ with linear part $\rho$, via the map $b \mapsto A_b$ for $b \in Z^1(G,\rho)$, where $A_b(g): V \to V,  v \mapsto \rho(g) v + b(g)$ for $g \in G$. In a similar way, we may identify $B^1(G,\rho)$ with the space of maps of the form $b(g) = v - \rho(g) v$ for some $v \in V$. These maps correspond to continuous affine actions of $G$ on $V$ having a fixed point. If the group $G$ is compactly generated, $S$ is a compact generating set and $(V, || \cdot ||_V)$ is a Banach space, then $Z^1(G,\rho)$ is a Banach space with norm $||b|| =  \sup_{x \in S} ||b(x)||_V$ for $b \in Z^1(G,\rho)$.

\paragraph{Continuous group $L^p$-cohomology}

Let $G$ be a locally compact second countable group endowed with a left-invariant Haar measure $\mu_G$.
In this article we will be interested in the Banach space $V = L^p(G)$ of $p$-integrable functions with respect to the measure $\mu_G$ for $1 <p < \infty$. The representation $\rho_p$ of $V$ we are interested in is the \textit{right regular representation} of $G$, defined by right translation on the argument of an $L^p$-function: $\rho_p(g) f (x) = f (xg)$. 
This defines a continuous representation of $G$. Notice that it is an isometric representation if and only if the measure $\mu_G$ is also right-invariant, that is, if and only if $G$ is unimodular. In this case $\rho_p$ is also continuously conjugate to the similarly defined left regular representation $\l_p$ via the continuous linear map $L^p(G) \to L^p(G)$ sending $f$ to $\Check{f}(x) = f (x^{-1})$.

The representation $\rho_p$ will appear for several different groups, we will not need to specify the action as it will be enough to specify the corresponding vector space. In order to avoid (even more) cumbersome notation, $H^k_\mathrm{ct} (G, L^p(G))$ will just mean $H^k_\mathrm{ct} (G, \rho_p)$.

The most important feature of $L^p$-cohomology, in contrast to cohomology with respect to an arbitrary representation, is that it is a quasi-isometry invariant for any degree $k \in \na$ and $1 < p < \infty$. This is a phenomenon that has been shown for different types of $L^p$-cohomology by different people: see \cite[p. 219]{gromov} for a sketch of proof in the simplicial $\ell^p$-cohomology setting, \cite{bourdon-pajot} for a more detailed proof in the same context, \cite{pansu95} for a proof for de Rham $L^p$-cohomology. Here we are interested in quasi-isometric invariance of continuous group $L^p$-cohomology. It was first proven by Elek in \cite{elek-coarse} for finitely generated groups by comparing it with asymptotic $L^p$-cohomology. In \cite{sauer-schrodl}, the same idea is used to show that vanishing of an $\ell^2$-Betti number is a coarse equivalence invariant. In \cite{bourdon-remy-vanishings}, quasi-isometric invariance of continuous group $L^p$-cohomology is proven for general locally compact second countable groups.

\begin{thm}\cite[Theorem 1.1]{bourdon-remy-vanishings}
Let $G_1$ and $G_2$ be locally compact second countable groups, equipped with left-invariant proper metrics $d_1$ and $d_2$. Every quasi-isometry $F : (G_1, d_1) \to (G_2, d_2)$ induces canonically an isomorphism of graded topological vector spaces:
\begin{equation*}
    F^*: H^*_\mathrm{ct} (G_2, L^p(G_2)) \to  H^*_\mathrm{ct} (G_1, L^p(G_1))
\end{equation*}
for every $ 1 < p < \infty$. The same holds for reduced cohomology.
\end{thm}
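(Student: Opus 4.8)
The plan is to factor both sides through a third, manifestly coarse, object — the asymptotic $L^p$-cohomology $H^*_{AL^p}(X)$ of a metric measure space $(X,d)$ of bounded geometry, in the sense of Pansu and Elek — and to prove two statements: (i) a quasi-isometry induces an isomorphism on asymptotic $L^p$-cohomology, and (ii) for a locally compact second countable group $G$ with left-invariant proper metric $d$ and Haar measure $\mu_G$ there is a natural topological isomorphism $H^*_\mathrm{ct}(G, L^p(G)) \simeq H^*_{AL^p}(G,d)$. Granting these, the theorem is immediate: a quasi-isometry $F : (G_1,d_1) \to (G_2,d_2)$ yields $H^*_{AL^p}(G_2) \simeq H^*_{AL^p}(G_1)$ by (i), and (ii) identifies the two ends with the respective continuous $L^p$-cohomologies; naturality of all the maps produces the canonical $F^*$, and running the same argument on the closures of coboundaries gives the reduced statement.

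For (i), recall that a $k$-cochain in the asymptotic complex at scale $s$ is a measurable function on the $s$-neighborhood $\{(x_0,\ldots,x_k) : d(x_i,x_j) \le s\}$ of the diagonal in $X^{k+1}$ that is $p$-integrable for the product measure, equipped with the simplicial differential, and $H^*_{AL^p}(X)$ is obtained by passing to the limit over $s$. A quasi-isometry $F$ with coarse inverse $\bar F$ sends $s$-neighborhoods of the diagonal into $s'$-neighborhoods (with $s'$ depending on the quasi-isometry constants) and distorts $\mu$ by a factor bounded above and below on balls of fixed radius, using bounded geometry; hence the pullback $F^\#$ is a well-defined morphism of complexes compatible with the scale filtration. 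Since $F\bar F$ and $\bar F F$ are at bounded distance from the identity, the composites $F^\# \bar F^\#$ and $\bar F^\# F^\#$ are chain-homotopic to the identity by the standard cone/prism homotopy, which is manifestly bounded on each scale; this gives the isomorphism on $H^*_{AL^p}$.

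The substance is (ii), and it is where I expect the main obstacle. Unwinding the invariance condition $g.c = c$ for $\rho_p$, one finds that an equivariant cochain $c \in C^k(G, L^p(G))^G$ is completely encoded by the function $F(g_0,\ldots,g_k) := c(g_0,\ldots,g_k)(e)$ on $G^{k+1}$, through the identity $c(g_0,\ldots,g_k)(x) = F(xg_0,\ldots,xg_k)$; finiteness of the $L^p(G)$-norms becomes the condition $\int_G |F(xg_0,\ldots,xg_k)|^p\, d\mu_G(x) < \infty$, and the two differentials coincide. The genuine difficulty is that $C^k(G,L^p(G))^G$ consists of cochains defined on all of $G^{k+1}$, whereas the asymptotic complex only records the scale-limit of cochains supported near the diagonal; the comparison therefore amounts to showing that global equivariant $L^p$-cochains compute the same cohomology as the locally supported ones. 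I would establish this by constructing an explicit contracting homotopy — an Eilenberg-type averaging against a compactly supported probability density, so that it lowers the diagonal-support scale while preserving the integrability bounds — and by checking that it is continuous for the two Fréchet structures at play, namely the topology of uniform convergence on compacta on $C^k(G,V)$ on one side and the inverse/direct-limit topology of the asymptotic complex on the other, so that the resulting isomorphism is topological and not merely algebraic. Two points require particular care: the passage between measurable and continuous representatives of equivariant cochains, and, because $G$ need not be unimodular so that $\rho_p$ is not isometric, keeping track of the modular function $\Delta_G$ throughout the identification $c \leftrightarrow F$, since it enters the change of variables in the $L^p$-norms above.
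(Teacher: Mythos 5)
Your proposal follows essentially the same route as the proof the paper relies on: the paper cites \cite{bourdon-remy-vanishings} and explicitly notes (in the remark after the theorem) that the argument there, as in \cite{sauer-schrodl} and going back to Elek in the finitely generated case, proceeds by comparing continuous group $L^p$-cohomology with asymptotic $L^p$-cohomology, whose coarse invariance is due to Pansu. Your two-step factorization --- quasi-isometry invariance of the asymptotic complex via bounded chain homotopies, plus the identification of equivariant $L^p$-cochains with the asymptotic complex (correctly flagged as the substantive step, including the non-unimodular bookkeeping) --- is exactly that strategy.
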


\begin{rem}
1. The idea of both \cite{sauer-schrodl} and \cite{bourdon-remy-vanishings} consists in comparing continuous group $L^p$-cohomology to asymptotic $L^p$-cohomology. The latter is a coarse equivalence invariant \cite{pansu95} and hence, even if not stated explicitly, continuous group $L^p$-cohomology is also invariant under coarse equivalences.

2. Continuous cohomology with coefficients in the left regular representation $\l_{p,G}$ is not invariant under quasi-isometries when at least one of the two groups is not unimodular (when both are unimodular, it is same as considering the right regular representation). For instance, the groups $G = \SL(2, \re)$ and $B < \SL_2(\re)$ of upper triangular matrices are quasi-isometric. One has $ \overline{H}^1_\mathrm{ct} (G, \l_{p,G}) \neq \{ 0 \}$ for all $p > 1$ ($G$ is unimodular, so $\overline{H}^1_\mathrm{ct} (G, \l_{p,G}) =\overline{H}^1_\mathrm{ct} (G, \rho_{p,G})$ and $G$ is quasi-isometric to the real hyperbolic plane $\mathbb{H}^2_\re$, therefore using comparison theorems between continuous group $L^p$-cohomology, asymptotic $L^p$-cohomology and de Rham $L^p$-cohomology \cite[Theorems 6.5 and 6.7]{bourdon-remy-non-vanishing} we have $\overline{H}^1_\mathrm{ct} (G, \rho_{p,G}) = L^p H_\mathrm{dR}^1(\mathbb{H}^2_\re)$ and the latter is nonzero for all $p>1$ \cite[5.2]{pansu89}). On the other hand $ \overline{H}^1_\mathrm{ct} (B, \l_{2,B}) = \{ 0 \}$ \cite{delorme-1-cohomologie-reps-unitaires}.
\end{rem}

\subsection{Spectral sequence reduction}

Let $G$ be a semisimple group over a local field $F$ of split rank $r$.
The idea of this section is to use quasi-isometric invariance of continuous group $L^p$-cohomology \cite[Theorem 1.1]{bourdon-remy-vanishings} and the Hochschild-Serre spectral sequence. This reduces the computation of the second $L^p$-cohomology group of $G$ to the first continuous group cohomology of a semisimple factor of a well-chosen parabolic subgroup with values in a more exotic $L^p$-module. The following version of the spectral sequence is well-suited to compute $L^p$-cohomology:

\begin{thm}\cite[Corollary 5.4]{bourdon-remy-vanishings} \label{Lp-Hochschild-Serre}
Let $P$ be a locally compact second countable group. Suppose that $P = Q \ltimes S$ where $Q$ and $S$ are two closed subgroups of $P$, such that $C^*(S, L^p(S))$ is homotopically equivalent to a complex of Banach spaces and every cohomology space $H^k_\mathrm{ct}(S, L^p(S))$ is Hausdorff. Then there exists a spectral sequence $(E_r)$ abutting to $H^*_\mathrm{ct}(P, L^p(P))$ in which:
\begin{equation*}
    E_2^{k,l} = H^k_\mathrm{ct} (Q, L^p(Q, H^l_\mathrm{ct}(S, L^p(S)))).
\end{equation*}
\end{thm}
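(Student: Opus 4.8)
The plan is to realize the asserted spectral sequence as the Hochschild--Serre spectral sequence in continuous cohomology for the extension $1 \to S \to P \to Q \to 1$, and then to identify its $E_2$-page using the product structure of $L^p(P)$. I would begin with the double complex computing $H^*_\mathrm{ct}(P, L^p(P))$: writing cochains on $P$ in non-homogeneous form and separating the $Q$- and $S$-variables (equivalently, invoking the Grothendieck spectral sequence of the composite of functors $(-)^P = ((-)^S)^Q$ as in \cite[IX]{borel-wallach}). This produces a first-quadrant spectral sequence abutting to $H^*_\mathrm{ct}(P, L^p(P))$ whose second page is the iterated continuous cohomology
\[
    E_2^{k,l} = H^k_\mathrm{ct}\bigl(Q,\, H^l_\mathrm{ct}(S, L^p(P))\bigr),
\]
where the $Q$-module structure on $H^l_\mathrm{ct}(S, L^p(P))$ is induced by the conjugation action of $Q$ on $S$ together with the $P$-action on $L^p(P)$. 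The hypothesis that $C^*(S, L^p(S))$ is homotopy equivalent to a complex of Banach spaces is what guarantees that the inner cohomology behaves like a genuine coefficient module, so that the outer $Q$-cohomology is well defined.

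The heart of the argument is the computation of the inner cohomology $H^l_\mathrm{ct}(S, L^p(P))$. Using the semidirect product structure together with the product decomposition of the left Haar measure of $P$, I would identify $L^p(P) \cong L^p(Q, L^p(S))$ as topological vector spaces, and check that under the right regular representation this is an isomorphism of $S$-modules in which $S$ acts fiberwise by its own right regular representation on the factor $L^p(S)$, the $Q$-coordinate being a passive parameter (here one uses that $S$ is normal, so right multiplication by $S$ fixes the $Q$-coordinate). Consequently the cochain complex $C^*(S, L^p(P))$ is identified with $L^p(Q, C^*(S, L^p(S)))$, the Bochner space of $Q$ valued in the cochain complex of $S$.

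It then remains to commute the functor $L^p(Q, -)$ with cohomology, that is, to prove
\[
    H^l_\mathrm{ct}\bigl(S, L^p(Q, L^p(S))\bigr) \cong L^p\bigl(Q,\, H^l_\mathrm{ct}(S, L^p(S))\bigr).
\]
I expect this to be the main obstacle, and it is precisely here that both standing hypotheses enter. Replacing $C^*(S, L^p(S))$ by a homotopy equivalent complex of Banach spaces with bounded differentials, and using that each space $H^l_\mathrm{ct}(S, L^p(S))$ is Hausdorff so that the coboundaries are closed, one decomposes the complex into short exact sequences of Banach spaces relating at each level the cochains, the cocycles, the coboundaries and the cohomology. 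The key analytic input is that the Bochner functor $L^p(Q, -)$ is exact on short exact sequences of Banach spaces; this follows from the existence of a continuous Bartle--Graves section of any surjection of Banach spaces, which provides measurable bounded lifts of $L^p$-functions. Applying $L^p(Q, -)$ to these sequences and reassembling yields the claimed commutation, and in particular $H^l_\mathrm{ct}(S, L^p(P)) \cong L^p(Q, H^l_\mathrm{ct}(S, L^p(S)))$ as $Q$-modules (the right-hand side being a Banach space, hence automatically Hausdorff).

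Substituting this identification into the $E_2$-page above gives $E_2^{k,l} = H^k_\mathrm{ct}(Q, L^p(Q, H^l_\mathrm{ct}(S, L^p(S))))$, as asserted. The one point demanding care throughout is the compatibility of the $Q$-actions: one must check that the $Q$-module structure placed by the spectral sequence on $H^l_\mathrm{ct}(S, L^p(P))$ corresponds, under the above identification, to the diagonal $Q$-action on $L^p(Q, H^l_\mathrm{ct}(S, L^p(S)))$ --- namely the right regular representation on $L^p(Q)$ combined with the conjugation-induced action on the coefficient space $H^l_\mathrm{ct}(S, L^p(S))$.
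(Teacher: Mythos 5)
This theorem is not proved in the paper at all: it is quoted verbatim from \cite[Corollary 5.4]{bourdon-remy-vanishings}, so your attempt can only be compared with the strategy of that source. Your skeleton does match it: the Hochschild--Serre machinery of Borel--Wallach for the semidirect product, the identification $L^p(P) \cong L^p(Q, L^p(S))$ as $S$-modules (which is indeed clean here: writing $p = qs$, the product measure $\mu_Q \otimes \mu_S$ is a left Haar measure on $P$ and right translation by $S$ acts fiberwise), the exactness of $L^p(Q,-)$ on short exact sequences of Banach spaces via bounded continuous sections, and the final compatibility check on the $Q$-actions are all components of the actual proof.

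There is, however, one genuine gap. Your sentence ``the cochain complex $C^*(S, L^p(P))$ is identified with $L^p(Q, C^*(S, L^p(S)))$'' is false as stated. In degree $l$ the first space is $C(S^{l+1}, L^p(Q, L^p(S)))$ (continuous maps with the compact-open topology), while the second is $L^p(Q, C(S^{l+1}, L^p(S)))$. There is a natural continuous injection of the latter into the former (by dominated convergence), but it is far from surjective: a continuous family $\bar{s} \mapsto F(\bar{s}) \in L^p(Q, L^p(S))$ has evaluations $F(\bar{s})(q)$ defined only for almost every $q$, for each $\bar{s}$ separately, and one cannot in general choose versions making $\bar{s} \mapsto f(q)(\bar{s})$ continuous for almost every $q$. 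Concretely, already with $Q = S = \re$ and $F(s)(q) = \mathbbm{1}_{[0,1]}(q-s)\, v$ for a fixed $v$, the map $s \mapsto F(s)$ is continuous into $L^p(Q, L^p(S))$ by continuity of translations, but any preimage $f$ would require $s \mapsto \mathbbm{1}_{[q-1,q]}(s)$ to agree almost everywhere with a continuous function for almost every $q$, which is impossible. So the two complexes are genuinely different, and the key identity $H^l_\mathrm{ct}(S, L^p(P)) \cong L^p(Q, H^l_\mathrm{ct}(S, L^p(S)))$ does not follow from your two displayed steps.

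What must actually be proved is that the natural comparison map induces an isomorphism on cohomology, and this is precisely where the hypothesis that $C^*(S, L^p(S))$ is homotopy equivalent to a complex of \emph{Banach} spaces does its real work: it lets one replace the Fr\'echet rows of the double complex by Banach complexes with closed-range differentials, for which your Bartle--Graves interchange argument (which is fine in itself) becomes applicable; the hypothesis is not there merely to ``make the coefficient module behave''. A second, smaller point of order: the Borel--Wallach identification of the $E_2$-page requires $H^l_\mathrm{ct}(S, L^p(P))$ to be Hausdorff, so the inner computation, including the Hausdorffness you observe only at the end, must be completed \emph{before} the $E_2$-page may legitimately be written in the iterated form.
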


We keep the same notations as in the previous section. Let $\g \in \S$ be a simple root of $\Phi = \Phi(S, G)$. The group $G$ is quasi-isometric to the semi-direct product $R^\g \ltimes H_\g$. The spectral sequence will turn out to be useful on this semi-direct product because the $L^p$-cohomology of $H_\g$ is sufficiently well-understood.

\begin{cor}\label{Hk(G,Lp(G))=Hk-1(R,Lp(R,H1(AN,Lp(AN))))}
For any simple root $\g \in \S$, any $p > \max \{ \mathrm{Confdim}(\partial H_\g), 1 \}$ and any integer $k \geq 1$, we have linear isomorphisms:
\begin{equation*}
    H_\mathrm{ct}^k(G, L^p(G)) = H_\mathrm{ct}^{k-1}(R^\g, L^p(R^\g, H_\mathrm{ct}^1(H_\g, L^p(H_\g)))).
\end{equation*}
The $R^\g$-action of $g \in R^\g$ on $b : H_\g \to L^p(H_\g)$ is defined by:
\begin{equation*}
    (\pi_0(g) b) (h) (x) = b (g^{-1}h g)(g^{-1}x g).
\end{equation*}
The $R^\g$-action $\pi$ on $f \in L^p(R^\g, H_\mathrm{ct}^1(H_\g, L^p(H_\g)))$ is defined for $g,h \in R^\g$ by:
\begin{equation*}
    (\pi(g) f)(h) =\pi_0(g)(f(hg)).
\end{equation*}
\end{cor}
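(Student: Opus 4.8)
The plan is to combine quasi-isometric invariance of continuous group $L^p$-cohomology with the Hochschild-Serre spectral sequence of Theorem \ref{Lp-Hochschild-Serre}, exploiting the fact that the $L^p$-cohomology of the Heintze group $H_\g$ is concentrated in a single degree. Since $G \simeq_\mathrm{qi} R^\g \ltimes H_\g$ through a cocompact inclusion, the invariance theorem gives a graded isomorphism $H^*_\mathrm{ct}(G, L^p(G)) \simeq H^*_\mathrm{ct}(P, L^p(P))$ with $P := R^\g \ltimes H_\g$, so it suffices to compute the right-hand side. I would then apply Theorem \ref{Lp-Hochschild-Serre} to the decomposition $P = Q \ltimes S$ with $Q = R^\g$ and $S = H_\g$.

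Applying that theorem requires checking its two hypotheses for $S = H_\g$: that $C^*(H_\g, L^p(H_\g))$ is homotopically equivalent to a complex of Banach spaces, and that each $H^l_\mathrm{ct}(H_\g, L^p(H_\g))$ is Hausdorff. Both rest on the structure of $H_\g$ as an amenable, non-unimodular, Gromov-hyperbolic group and on the known description of the $L^p$-cohomology of such groups in terms of the geometry of their boundary at infinity. Granting these, the spectral sequence is available and abuts to $H^*_\mathrm{ct}(P, L^p(P))$ with $E_2^{k,l} = H^k_\mathrm{ct}(R^\g, L^p(R^\g, H^l_\mathrm{ct}(H_\g, L^p(H_\g))))$.

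The crucial input is that, for $p > \max\{\mathrm{Confdim}(\partial H_\g), 1\}$, one has $H^l_\mathrm{ct}(H_\g, L^p(H_\g)) = 0$ for every $l \neq 1$; this is where the conformal-dimension threshold enters. Consequently the $E_2$-page is supported on the single row $l = 1$, so for $r \geq 2$ every differential $d_r \colon E_r^{k,l} \to E_r^{k+r, l-r+1}$ has either source or target in a vanishing row and hence vanishes. The spectral sequence thus degenerates at $E_2$, the total-degree-$n$ abutment has only the contribution $E_\infty^{n-1,1} = E_2^{n-1,1}$, and so $H^k_\mathrm{ct}(P, L^p(P)) = H^{k-1}_\mathrm{ct}(R^\g, L^p(R^\g, H^1_\mathrm{ct}(H_\g, L^p(H_\g))))$. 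Combined with the first step this gives the displayed formula; because the identification is read off from the filtration of a degenerate spectral sequence, it is asserted only as a linear (not topological) isomorphism.

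It remains to identify the $R^\g$-module structure on the coefficients. As $H_\g$ is normal in $P$, the complementary factor $R^\g$ acts on $H_\g$ by conjugation, inducing an action on $1$-cocycles $b \colon H_\g \to L^p(H_\g)$; unwinding the conventions of Section \ref{Section:continuous cohomology} for the right regular representation (conjugation acting simultaneously on the source $H_\g$ and on the value space $L^p(H_\g)$) yields exactly the stated formula for $\pi_0$, and the coefficient space $L^p(R^\g, H^1_\mathrm{ct}(H_\g, L^p(H_\g)))$ then carries the action $\pi$ combining the regular representation on $R^\g$ with $\pi_0$ on the fibers. I expect the main obstacle to be the crucial input of the third paragraph, namely establishing the degree-$1$ concentration together with the Hausdorffness of $H^*_\mathrm{ct}(H_\g, L^p(H_\g))$: this relies on the fine large-scale geometry of Heintze groups and on the precise role of $\mathrm{Confdim}(\partial H_\g)$, whereas the collapse of the spectral sequence and the bookkeeping of the actions are then essentially formal.
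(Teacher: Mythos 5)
Your proposal follows essentially the same route as the paper: quasi-isometric invariance to pass from $G$ to $R^\g \ltimes H_\g$, then Theorem \ref{Lp-Hochschild-Serre} with $Q = R^\g$, $S = H_\g$, and collapse of the $E_2$-page because the $L^p$-cohomology of $H_\g$ is Hausdorff and concentrated in degree $1$ above the conformal-dimension threshold. The ``main obstacle'' you flag is exactly what the paper settles by citation: in the non-Archimedean case $H_\g$ is quasi-isometric to a tree (Cornulier--Tessera), and in the real case one combines non-vanishing in degree $1$ for $p > \mathrm{Confdim}(\partial H_\g)$ (Cornulier--Tessera), Hausdorffness (Tessera), and vanishing in degrees $\geq 2$ (Bourdon).
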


\begin{proof} Quasi-isometric invariance of continuous group $L^p$-cohomology \cite[1.1]{bourdon-remy-vanishings} gives topological isomorphisms:
\begin{equation*}
    H_\mathrm{ct}^k(G, L^p(G)) = H_\mathrm{ct}^k(R^\g \ltimes H_\g, L^p(R^\g \ltimes H_\g)).
\end{equation*}
We apply Theorem \ref{Lp-Hochschild-Serre} to the semi-direct product $R^\g \ltimes H_\g$. When $F$ is non-Archimedean, the group $H_\g$ is quasi-isometric to a tree \cite[4.6]{cornulier-tessera}, so its $L^p$-cohomology is Hausdorff and concentrated in degree 1 for all $p>1$. In the real case, if $p > \mathrm{Confdim}(\partial H_\g)$ then $H_\mathrm{ct}^{1}(H_\g, L^p(H_\g)) \neq  \{ 0 \}$ \cite[Theorem 1]{cornulier-tessera}, is Hausdorff \cite[11.9]{tessera-sobolev} and moreover $H_\mathrm{ct}^{k}(H_\g, L^p(H_\g)) =  \{ 0 \}$ for $k \geq 2$ \cite[Corollaire B]{bourdon-lp-degre-superieur}. Thus the Hochschild-Serre spectral sequence collapses in the $E_2$ page and gives the desired linear isomorphisms.
\end{proof}

\begin{rem}
This is the only part of the proof where we need $p$ to be large in the real case. The rest of the proof works uniformly for all $p>1$, both in the real and in the non-Archimedean case.
If one would like to study the second $L^p$-cohomology group of real simple Lie groups for smaller values of $p$, the spectral sequence will give us other isomorphisms. For instance, if $H_\g$ is the real hyperbolic space of dimension $d$, Bourdon and Rémy use \cite{pansu08} and obtain vanishing for $p \leq (d-1)/2$ \cite[1.4]{bourdon-remy-vanishings}, and for $ (d-1)/2 <p \leq d-1 $, we obtain 
\begin{equation*}
H_\mathrm{ct}^2(G, L^p(G)) = H_\mathrm{ct}^{0}(R^\g, L^p(R^\g, H_\mathrm{ct}^2(H_\g, L^p(H_\g)))) =  L^p(R^\g, L^pH_\mathrm{dR}^2(H_\g))^{R^\g}.    
\end{equation*}
Adapting the techniques we will introduce further on continuous $L^p$-cohomology to de Rham $L^p$-cohomology may also show that this space vanishes for many groups. We do not do it because in practice, most of the vanishings obtained in this manner are contained in Corollary \ref{Uniform-Vanishing-for-Admissible-Groups} (except for type $B_r$ and some low rank cases).
\end{rem}

\section{Contracting automorphisms on Banach-valued $L^p$-spaces}

In this section, we adapt techniques from \cite[Section 2]{cornulier-tessera} to show vanishing of the right hand side of Corollary \ref{Hk(G,Lp(G))=Hk-1(R,Lp(R,H1(AN,Lp(AN))))} when $k = 2$, under contraction hypotheses that will be verified in subsequent sections.

The setting for the section is the following (except for the Lie theoretic statements at the very end). Let $G$ be a locally compact second countable group endowed with a left-invariant Haar measure $\mu$ and $(\pi_0 , V)$ be a continuous $G$-module on some separable Banach space $V$. Let $\pi$ denote the action on measurable functions $f : G \to V$ defined by:
\begin{equation*}
    (\pi(g) f)(h) = \pi_0(g)(f(hg)),
\end{equation*}
for all $g, h \in G$. For $p > 1$, we consider the Banach space $L^p(G, V)$ of Bochner $p$-integrable functions, that is, the set of measurable functions $f : G \to V$ such that:
\begin{equation*}
    ||f||_p^p = \int_G ||f(g)||_V^p \, \mathrm{d}\mu(g) < \infty.
\end{equation*}
We may also denote by $\pi_p$ the restriction of $\pi$ to the space $L^p(G,V)$.

\subsection{Operator norms and the modular function}

Our definition of the modular function $\D_G$ of $G$ is given by the following formula: for any measurable set $E$ of $G$ and $g \in G$ we have $\mu(Eg) = \D_G(g) \mu(E)$, or alternatively, for any continuous compactly supported function $\phi$ on $G$ we have:
\begin{equation*}
    \int_G \phi(hg^{-1}) \mathrm{d} \mu(h) = \D_G(g) \int_G \phi(h) \mathrm{d}\mu(h).
\end{equation*}

If $(E, || \cdot ||_E)$ is any Banach space and $A : E \to E$ is some bounded linear operator, we define the operator norm of $A$ by:
\begin{equation*}
    ||| A|||_{E} = \sup_{v \in E, v \neq 0} \frac{|| A v||_E}{|| v||_E}.
\end{equation*}

We first obtain the following identity on the operator norms of $\pi$. It is central in our reasoning because it shows that even though the operator norms of $\pi_0$ can be really big, we can hope to compensate them using the modular function.

\begin{prop}\label{generalGrowthEstimate}
For all $g \in G$ we have:
\begin{equation*}
    |||\pi(g)|||_{L^p(G,V)} = \Delta_G(g)^{-1/p} |||\pi_0(g)|||_V.
\end{equation*}
\end{prop}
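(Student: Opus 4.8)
The plan is to compute $|||\pi(g)|||$ directly from the definition as a supremum of the ratio $\|\pi(g)f\|_p / \|f\|_p$ over nonzero $f \in L^p(G,V)$. First I would write out $\|\pi(g)f\|_p^p$ explicitly using the formula $(\pi(g)f)(h) = \pi_0(g)(f(hg))$:
\begin{equation*}
    \|\pi(g)f\|_p^p = \int_G \|\pi_0(g)(f(hg))\|_V^p \, \mathrm{d}\mu(h).
\end{equation*}
The two phenomena at play are visible here: the operator $\pi_0(g)$ acting pointwise in $V$ produces the factor that will give $|||\pi_0(g)|||_V$, while the right-translation of the argument $h \mapsto hg$ interacts with the left-invariant measure $\mu$ and produces the modular factor. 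My strategy is to prove the inequality $\leq$ and the reverse inequality $\geq$ separately.

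For the upper bound, I would bound $\|\pi_0(g)(f(hg))\|_V^p \leq |||\pi_0(g)|||_V^p \, \|f(hg)\|_V^p$ pointwise, pull the constant out of the integral, and then perform the change of variables $h \mapsto hg^{-1}$ (equivalently, substitute to replace $f(hg)$ by $f(h)$). By the definition of the modular function given in the excerpt, $\int_G \phi(hg^{-1})\,\mathrm{d}\mu(h) = \Delta_G(g)\int_G \phi(h)\,\mathrm{d}\mu(h)$; applying this with $\phi(h) = \|f(h)\|_V^p$ turns $\int_G \|f(hg)\|_V^p \,\mathrm{d}\mu(h)$ into $\Delta_G(g)^{-1}\|f\|_p^p$. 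This yields $\|\pi(g)f\|_p^p \leq |||\pi_0(g)|||_V^p \, \Delta_G(g)^{-1}\|f\|_p^p$, hence $|||\pi(g)|||_{L^p(G,V)} \leq \Delta_G(g)^{-1/p}|||\pi_0(g)|||_V$ after taking $p$-th roots and the supremum.

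For the reverse inequality I would show the bound is attained in the limit. The idea is to choose test functions $f$ that nearly saturate the pointwise operator-norm bound: pick a unit vector $v \in V$ with $\|\pi_0(g)v\|_V$ close to $|||\pi_0(g)|||_V$, and take $f$ of the product form $f(h) = \phi(h)\,v$ for a scalar function $\phi \in L^p(G)$, so that $\pi_0(g)$ acts on the constant direction $v$ and the change of variables again contributes exactly $\Delta_G(g)^{-1/p}$. This gives $\|\pi(g)f\|_p = \|\pi_0(g)v\|_V \,\Delta_G(g)^{-1/p}\|\phi\|_{L^p(G)}$, and since $v$ can be chosen to make $\|\pi_0(g)v\|_V$ arbitrarily close to $|||\pi_0(g)|||_V$, the supremum is at least $\Delta_G(g)^{-1/p}|||\pi_0(g)|||_V$. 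Combining the two bounds gives equality.

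The main obstacle I anticipate is entirely technical rather than conceptual: justifying the change of variables at the level of the Bochner integral and confirming that the modular-function identity, stated for continuous compactly supported $\phi$, extends to $\phi = \|f(\cdot)\|_V^p \in L^1(G)$ by a standard density argument. One should also verify that the product test functions $f = \phi\cdot v$ are genuinely in $L^p(G,V)$ (immediate, since $\|f\|_p = \|v\|_V\|\phi\|_{L^p(G)}$) and that measurability of $h \mapsto \pi_0(g)(f(hg))$ poses no issue given continuity of $\pi_0$. None of these steps is deep; the content of the proposition is precisely the clean separation of the $V$-side operator norm from the $\Delta_G(g)^{-1/p}$ contributed by pushing the right translation against the left Haar measure.
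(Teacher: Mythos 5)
Your proposal is correct and follows essentially the same route as the paper: the upper bound via the pointwise operator-norm estimate combined with the modular-function change of variables, and the lower bound via a nearly norm-maximizing vector $v \in V$ tensored with a scalar $L^p$ function (the paper simply takes $\phi = \mathbbm{1}_K$ for a compact set $K$ of measure $1$, a special case of your product test functions $f = \phi \cdot v$). The technical points you flag (extending the modular identity from $C_c(G)$ to $L^1$, measurability) are indeed routine and the paper passes over them silently.
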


\begin{proof}
The proof follows from the definition of the modular function:
\begin{align*}
    ||\pi(g)f||_{L^p(G,V)}^p & = \int_G ||\pi_0(g)( f(hg))||_V^p \, \mathrm{d}\mu(h) \\
    & = \Delta_G(g)^{-1} \int_G ||\pi_0(g)( f(h))||_V^p \, \mathrm{d}\mu(h) \\
    & \leq \Delta_G(g)^{-1} |||\pi_0(g)|||_V^p ||f||_{L^p(G,V)}^p
\end{align*}
For the reverse inequality, fix $\varepsilon > 0$ and let $v \in V$ be such that $||\pi_0(g) v||_V \geq (|||\pi_0(g)||| - \varepsilon) ||v||_V$.
Fix some compact $K \subset G$ such that $\mu_G(K) = 1$ and we may define $f_v \in L^p(G,V)$ by $f_v(x) = v$ for $x \in K$ and $f(x) = 0$ for $x \in G \setminus K$. We have $||f_v||_{L^p(G,V)} = ||v||_V$ and
\begin{align*}
    ||\pi(g)f_v||_{L^p(G,V)}^p & = \Delta_G(g)^{-1} \int_K ||\pi_0(g)v||_V^p \, \mathrm{d}\mu(h) \\
    & \geq \Delta_G(g)^{-1} (|||\pi_0(g)||| - \varepsilon)^p ||f_v||_{L^p(G,V)}^p.
\end{align*}
\end{proof}

\subsection{Contracting automorphisms and cohomology}

We introduce the main object of study of this section, namely, $\pi_p$-contracting elements. This subsection is devoted to showing some complementary results that are not necessary for the proof of Theorem \ref{Vanishing-in-degree-2-Intro} (though we will need Lemma \ref{Pi-bounded-invariance} for some cases) but that highlight the importance of contractions.

\begin{defn}
We say that $\xi \in G$ is \textit{$\pi_p$-contracting} if 
\begin{equation*}
    \lim_{n \to +\infty} ||| \pi(\xi^n)|||_{L^p(G, V)} = 0,
\end{equation*} 
and \textit{$\pi_p$-bounded} if $\xi$ generates a non-relatively compact semigroup and \begin{equation*}
    \sup_{n>0} ||| \pi(\xi^n)|||_{L^p(G, V)} < \infty.
\end{equation*}
\end{defn}

A non-compact group $G$ satisfies a linear Sobolev inequality on $L^p(G)$ if and only if $G$ is not simultaneously amenable and unimodular. 
The first result we show is a generalization of the fact that a non-unimodular group satisfies a linear Sobolev inequality on $L^p(G)$ \cite[11.9]{tessera-sobolev}, but this time for the representation $\pi_p$.

\begin{lem}\cite[11.10]{tessera-sobolev}
Suppose that there exists a $\pi_p$-contracting element $\xi \in G$. Then the space $H_\mathrm{ct}^1(G, \pi_p)$ is Hausdorff, that is:
\begin{equation*}
    H_\mathrm{ct}^1(G, \pi_p ) = \overline{H}_\mathrm{ct}^1(G, \pi_p ).
\end{equation*}
\end{lem}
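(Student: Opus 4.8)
The plan is to recast Hausdorffness as a closedness statement and then to construct, using the contracting element $\xi$, an explicit continuous left inverse to the coboundary map. Recall that $H_\mathrm{ct}^1(G, \pi_p)$ is Hausdorff precisely when $B^1(G,\pi_p)$ is closed in $Z^1(G,\pi_p)$ for the topology of uniform convergence on compact subsets. First I would record a cheap but decisive consequence of the hypothesis: the representation $\pi_p$ has no nonzero invariant vectors, since $\pi(\xi)v = v$ forces $\|v\|_p \leq |||\pi(\xi^n)|||_{L^p(G,V)} \|v\|_p \to 0$. Hence the coboundary map $\partial \colon L^p(G,V) \to Z^1(G,\pi_p)$, $v \mapsto (g \mapsto v - \pi(g)v)$, is injective, and it suffices to prove that its image $B^1(G,\pi_p)$ is closed.

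The heart of the argument is a reconstruction map. For an arbitrary cocycle $b \in Z^1(G,\pi_p)$, the cocycle relation gives by induction $b(\xi^n) = \sum_{k=0}^{n-1} \pi(\xi^k) b(\xi)$. The contraction hypothesis, combined with submultiplicativity of the operator norm, upgrades $|||\pi(\xi^n)|||_{L^p(G,V)} \to 0$ to geometric decay: choosing $m$ with $\theta := |||\pi(\xi^m)|||_{L^p(G,V)} < 1$ and writing $n = qm + r$ yields $|||\pi(\xi^n)|||_{L^p(G,V)} \leq C\theta^{n/m}$, so that $C_\xi := \sum_{k \geq 0} |||\pi(\xi^k)|||_{L^p(G,V)} < \infty$. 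The series therefore converges absolutely in $L^p(G,V)$, and I would set $T(b) := \lim_{n \to \infty} b(\xi^n) = \sum_{k \geq 0} \pi(\xi^k) b(\xi)$. The estimate $\|T(b)\|_p \leq C_\xi \|b(\xi)\|_p$ then shows $T$ is continuous, since $b \mapsto \|b(\xi)\|_p$ is controlled by the seminorm of uniform convergence on the compact set $\{\xi\}$.

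To conclude, I would check $T \circ \partial = \mathrm{id}_{L^p(G,V)}$: for a coboundary one has $b_v(\xi^n) = v - \pi(\xi^n) v \to v$, so $T(b_v) = v$. Thus every coboundary satisfies $b = \partial(T(b))$, while conversely $\partial(T(b)) \in B^1(G,\pi_p)$ for any $b$, whence
\begin{equation*}
    B^1(G,\pi_p) = \ker\bigl( \mathrm{id}_{Z^1} - \partial \circ T \bigr).
\end{equation*}
As $\partial$ and $T$ are continuous, this kernel is closed, which gives that $B^1(G,\pi_p)$ is closed and therefore $H_\mathrm{ct}^1(G, \pi_p) = \overline{H}_\mathrm{ct}^1(G, \pi_p)$.

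The only genuinely delicate point will be the convergence and continuity of $T$, that is, upgrading $|||\pi(\xi^n)|||_{L^p(G,V)} \to 0$ to summability; this is exactly where submultiplicativity of the operator norms enters, and where, if one wishes to make the decay rate explicit, Proposition \ref{generalGrowthEstimate} relating $|||\pi(\xi^n)|||_{L^p(G,V)}$ to $\Delta_G(\xi)$ and $|||\pi_0(\xi)|||_V$ can be invoked. Everything else is formal manipulation of the cocycle identity; in particular, no compact generating set for $G$ is required, since the reconstruction uses only iterates of the single element $\xi$.
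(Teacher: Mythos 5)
Your proof is correct, but it runs along a genuinely different track than the paper's, which is a two-line coercivity estimate: after replacing $\xi$ by a power so that $|||\pi(\xi)|||_{L^p(G,V)} < 1$ (legitimate, since powers of a $\pi_p$-contracting element are again $\pi_p$-contracting), the reverse triangle inequality gives, for any compact $Q \ni \xi$,
\begin{equation*}
    \sup_{g \in Q} \|f - \pi(g)f\|_p \; \geq \; \|f - \pi(\xi)f\|_p \; \geq \; \bigl(1 - |||\pi(\xi)|||_{L^p(G,V)}\bigr)\, \|f\|_p,
\end{equation*}
so the coboundary map $\partial$ is bounded below from the Banach space $L^p(G,V)$ into $Z^1(G,\pi_p)$, and its image is complete, hence closed. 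You instead construct the continuous left inverse $T(b) = \sum_{k \geq 0} \pi(\xi^k) b(\xi)$, with summability supplied by submultiplicativity rather than by normalizing a power of $\xi$, and exhibit $B^1(G,\pi_p)$ as the kernel of the continuous map $\mathrm{id}_{Z^1} - \partial \circ T$. The two mechanisms are equivalent at bottom --- your identity $T \circ \partial = \mathrm{id}$ is exactly the invertibility of $\mathrm{id} - \pi(\xi)$ via the Neumann series, whose bounded-below-ness is what the paper's inequality records --- but your version buys more: the limit $T(b) = \lim_n b(\xi^n)$ and the normalized cocycle $b - \partial(T(b))$, which vanishes at $\xi$, are precisely what the paper constructs separately in Proposition~\ref{invarianceByBCElements}, so your argument yields the Hausdorffness lemma and that proposition in one stroke, at the cost of being longer than the paper's proof. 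Two points you leave implicit and should record: continuity of $\partial \colon L^p(G,V) \to Z^1(G,\pi_p)$, needed for the kernel argument (it follows because $\sup_{g \in K} |||\pi(g)|||_{L^p(G,V)} < \infty$ on compact $K$, by strong continuity of $\pi_p$ and Banach--Steinhaus), and the observation that the injectivity of $\partial$, while true, plays no role in the closedness of $B^1(G,\pi_p)$.
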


\begin{proof}
Up to changing $\xi$ by some power, we may suppose that $||| \pi(\xi)|||_{L^p(G, V)} < 1$. Since $G$ is $\s$-compact, we may write it as an increasing countable union of compact subsets $(Q_k)_k$. For $k$ large enough we have that $\xi \in Q_k$ and hence for $f \in L^p(G,V)$:
\begin{equation*}
    ||f||_{p, Q_k} = \sup_{g \in Q_k} ||f - \pi(g) f ||_p \geq ||f - \pi(\xi) f ||_p \geq ||f||_p - ||\pi(\xi)f||_p \geq C ||f||_p
\end{equation*}
where $C = 1 - |||\pi(\xi)|||_{L^p(G, V)} > 0$. Therefore $B^1(G, \pi_p)$ is closed in $Z^1(G, \pi_p)$.
\end{proof}

\begin{lem}\label{Pi-bounded-invariance} \cite[2.2]{cornulier-tessera}
Suppose that there exists a $\pi_p$-bounded element $\xi \in G$. If $f \in L^p(G, V)$ is $\pi(\xi)$-invariant, then $f = 0$.
\end{lem}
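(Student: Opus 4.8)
The plan is to exploit the uniform operator bound together with the fact that a $\pi_p$-bounded element escapes to infinity, so that its high powers sweep the support of any compactly supported approximant out of every fixed compact set. First I would note that invariance propagates to all powers: if $\pi(\xi)f = f$ then $\pi(\xi^n)f = f$ for all $n > 0$, and I would set $C := \sup_{n>0}|||\pi(\xi^n)|||_{L^p(G,V)}$, which is finite by the $\pi_p$-bounded hypothesis.

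Next, fix $\varepsilon > 0$. Since $V$ is separable and $G$ is locally compact second countable, $C_c(G,V)$ is dense in $L^p(G,V)$, so I would pick $g \in C_c(G,V)$ with $||f - g||_p < \varepsilon$ and put $K = \mathrm{supp}\, g$. The reason for this approximation is that the support of the translated function is explicit: from $(\pi(\xi^n)g)(h) = \pi_0(\xi^n)(g(h\xi^n))$ one reads off $\mathrm{supp}\,(\pi(\xi^n)g) \subseteq K\xi^{-n}$. Because $\xi$ generates a non-relatively-compact semigroup, some subsequence satisfies $\xi^{n_k} \to \infty$; as inversion is a homeomorphism of $G$ preserving compact sets, also $\xi^{-n_k} \to \infty$, whence for every compact $Q \subseteq G$ the translates $K\xi^{-n_k}$ eventually avoid $Q$ (otherwise $\xi^{-n_k}$ would lie in the compact set $K^{-1}Q$). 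Combining these facts, for $k$ large the function $\pi(\xi^{n_k})g$ vanishes on $Q$, while $||f - \pi(\xi^{n_k})g||_p = ||\pi(\xi^{n_k})(f-g)||_p \leq C\varepsilon$; since $f$ agrees with $f - \pi(\xi^{n_k})g$ on $Q$, this gives $||f||_{L^p(Q)} \leq C\varepsilon$. Letting $Q$ exhaust $G$ yields $||f||_p \leq C\varepsilon$, and as $\varepsilon$ is arbitrary, $f = 0$.

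The step I expect to be the main obstacle is reconciling the two sides of the hypotheses, which are stated only for the positive powers $\xi^n$ (both the uniform bound defining $C$ and the non-relative-compactness of the semigroup), with the support bound $K\xi^{-n}$, which instead involves the negative powers $\xi^{-n}$. This is precisely what the properness of inversion resolves: escaping of $\xi^{n_k}$ forces escaping of $\xi^{-n_k}$, so at no point do I need to control $|||\pi(\xi^{-n})|||_{L^p(G,V)}$, which is not provided by the $\pi_p$-bounded hypothesis.
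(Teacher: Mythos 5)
Your proof is correct. Every step checks out: density of compactly supported functions in $L^p(G,V)$ holds since $V$ is separable and $G$ is locally compact second countable; the support computation $\mathrm{supp}\,(\pi(\xi^n)g) \subseteq K\xi^{-n}$ is right; and your resolution of the positive/negative power asymmetry is sound, since a subsequence $\xi^{n_k}$ escaping every compact set forces $\xi^{-n_k}$ to escape as well (otherwise $\xi^{n_k}$ would visit the compact set $Q^{-1}$ infinitely often), and the operator norm bound is indeed only ever invoked for positive powers, via $\|f - \pi(\xi^{n_k})g\|_p = \|\pi(\xi^{n_k})(f-g)\|_p \leq C\varepsilon$. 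Your route is, however, genuinely different in execution from the paper's. The paper never approximates $f$: it fixes a compact set $X$, uses invariance and an inline modular-function computation to show $\|f\mathbbm{1}_X\|_p^p \leq C\, \|f\mathbbm{1}_{X\xi^n}\|_p^p$, extracts (by properness) a subsequence making the translates $X\xi^{n_k}$ pairwise \emph{disjoint}, and concludes from $\sum_k \|f\mathbbm{1}_{X\xi^{n_k}}\|_p^p \leq \|f\|_p^p < \infty$ that the right-hand masses tend to zero — a disjointness-plus-summability argument transporting the local mass of $f$ itself to infinity. You instead transport the \emph{support of an approximant} to infinity, which buys you two simplifications: you need only that the translates eventually avoid each compact set (weaker than disjointness, so no inductive extraction), and you never touch the modular function, working purely with the abstract bound $\sup_n |||\pi(\xi^n)|||$. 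The cost is the (mild, but nontrivial in the Bochner setting) input that $C_c(G,V)$ is dense in $L^p(G,V)$, which the paper's self-contained measure-theoretic argument does not require. Both proofs rest on the same mechanism — bounded operator norms plus escape to infinity — so the difference is one of packaging rather than of underlying idea, but your version is arguably cleaner and generalizes verbatim to any strongly continuous representation with uniformly bounded powers along a sequence pushing supports off every compact set.
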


\begin{proof}
 Let $X$ be any compact subset of $G$ and denote by $||f||_{X,p}$ the $L^p$-norm of $f 1_X$. Since $\xi$ generates a non-relatively compact semigroup and the action of $G$ on itself is proper, we may take a subsequence $(n_k)_{k \in \na}$ so that the translates $(X\xi^{n_k})_{k>0}$ are disjoint. Since $\xi$ is $\pi$-bounded, let $C = \sup_{n>0} ||| \pi(\xi^n)|||_{L^p(G, V)} < \infty$. We have:
\begin{align*}
    ||f||_{X,p}^p&  = ||\pi(\xi^n) f||_{X,p}^p = \int_X ||\pi_0(\xi^n) (f(h \xi^n)) ||_{V}^p \, \mathrm{d} \mu(h)\\ 
    &= \Delta_G(\xi)^{-n} \int_{X\xi^n} ||\pi_0(\xi^n) (f(h) ) ||_{V}^p \, \mathrm{d} \mu(h) \\
    & \leq \Delta_G(\xi)^{-n} |||\pi_0(\xi^n)|||_V^p \int_{X\xi^n} ||f(h)||_{V}^p \, \mathrm{d} \mu(h) \\
    & = |||\pi(\xi^{n})|||_{L^p(G,V)}^p ||f||_{X\xi^n, p}^p \leq C ||f||_{X\xi^n, p}^p.
\end{align*}
But $\sum_{k>0} ||f||_{X\xi^{n_k}, p} \leq ||f||_{G,p} < \infty$, hence $||f||_{X\xi^{n_k}, p} \to 0$ when $k \to +\infty$. Therefore the previous inequality implies that $||f||_{X,p} = 0$ for any compact $X \subset G$, which gives $f = 0$ almost everywhere.
\end{proof}

Before working towards a criterion for vanishing in degree 1, we point out that the previous lemma gives an easy criterion for vanishing in degree 0.

\begin{cor}\label{Vanishing in degree 0 for pi_p}
    Suppose that there exists some $\pi_p$-bounded element $\xi \in G$. Then:
    \begin{equation*}
        H_\mathrm{ct}^{0}(G, \pi_p ) = L^p(G, V)^{G} = \{0 \}.
    \end{equation*}
\end{cor}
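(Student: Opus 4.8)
The plan is to reduce the whole statement to Lemma \ref{Pi-bounded-invariance} via the standard identification of degree-zero cohomology with invariant vectors. First I would record the first equality $H^0_\mathrm{ct}(G,\pi_p) = L^p(G,V)^G$. Since there are no $(-1)$-cochains, $B^0(G,\pi_p) = 0$, so $H^0_\mathrm{ct}(G,\pi_p) = Z^0(G,\pi_p)$. Unwinding the definitions, a $G$-invariant homogeneous $0$-cochain is a continuous map $c : G \to L^p(G,V)$ with $\pi_p(g)\,c(g^{-1}g_0) = c(g_0)$ for all $g, g_0 \in G$; setting $g_0 = g$ shows $c$ has the form $c(g) = \pi_p(g) f$ with $f := c(1)$. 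The cocycle condition $d_0 c = 0$ then reads $\pi_p(g) f = f$ for all $g$, so $Z^0(G,\pi_p)$ is exactly the space $L^p(G,V)^G$ of $\pi_p$-invariant vectors. This gives the first identity.

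For the second equality I would simply observe that any $f \in L^p(G,V)^G$ is, in particular, $\pi(\xi)$-invariant, since $\xi \in G$. By hypothesis $\xi$ is $\pi_p$-bounded, so Lemma \ref{Pi-bounded-invariance} applies verbatim to $f$ and yields $f = 0$. Hence $L^p(G,V)^G = \{0\}$, which completes the proof.

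There is no genuine obstacle here: the corollary is a one-line consequence of the preceding lemma together with the elementary description of $H^0$ as the module of invariants. The only point worth stating explicitly is that a globally $G$-invariant vector is automatically invariant under the single element $\xi$, which is precisely what licenses the application of Lemma \ref{Pi-bounded-invariance}.
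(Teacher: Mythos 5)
Your proposal is correct and follows essentially the same route as the paper: the paper's proof is exactly your second paragraph (any $f \in L^p(G,V)^G$ is in particular $\pi(\xi)$-invariant, so Lemma \ref{Pi-bounded-invariance} gives $f=0$). Your explicit unwinding of $H^0_\mathrm{ct}(G,\pi_p) = Z^0(G,\pi_p) = L^p(G,V)^G$ is a standard identification that the paper leaves implicit in the statement, and it is carried out correctly.
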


\begin{proof}
    Every $f \in L^p(G, V)^{G}$ is $\pi(\xi)$-invariant, hence $f = 0$ by Lemma \ref{Pi-bounded-invariance}. 
\end{proof}

\subsection{Mautner's phenomenon and vanishing criterion}

Our next goal will be to give a criterion for vanishing in degree 1 for the representation $\pi_p$ using $\pi_p$-contracting elements. Whenever such an element exists, the following proposition allows us, for any cohomology class, to choose a representative that vanishes at the given contracting element.

\begin{prop}\label{invarianceByBCElements} \cite[2.1]{cornulier-tessera}
Let $\xi \in G$ be $\pi_p$-contracting and $b \in Z^1(G, \pi_p)$. There exists an element $c \in Z^1(G, \pi_p)$ such that $b -c \in B^1(G, \pi_p)$ and $c(\xi) = 0$. 
\end{prop}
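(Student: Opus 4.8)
The plan is to adjust $b$ within its cohomology class by a well-chosen coboundary so that the resulting cocycle kills $\xi$. Recall that a coboundary has the form $b_v(g) = v - \pi(g)v$ for some $v \in L^p(G,V)$, and that for any such $v$ the map $c := b - b_v$ is again a continuous $1$-cocycle lying in the same class as $b$. Evaluating at $\xi$ gives $c(\xi) = b(\xi) - \bigl(v - \pi(\xi)v\bigr)$, so the whole statement reduces to finding $v \in L^p(G,V)$ solving the linear equation
\begin{equation*}
    (\Id - \pi(\xi))\, v = b(\xi),
\end{equation*}
where $b(\xi) \in L^p(G,V)$ since $b \in Z^1(G,\pi_p)$.

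The heart of the argument is to invert the operator $\Id - \pi(\xi)$ on $L^p(G,V)$, and this is exactly where the contraction hypothesis enters. Since $\pi$ is a homomorphism we have $\pi(\xi)^n = \pi(\xi^n)$, and operator norms are submultiplicative: $|||\pi(\xi^{m+n})||| \leq |||\pi(\xi^m)|||\,|||\pi(\xi^n)|||$. Because $\xi$ is $\pi_p$-contracting, there is some $m$ with $\theta := |||\pi(\xi^m)|||_{L^p(G,V)} < 1$; then $|||\pi(\xi^{km})||| \leq \theta^k$, so by Gelfand's formula the spectral radius of $\pi(\xi)$ is at most $\theta^{1/m} < 1$. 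Consequently $\sum_{n \geq 0} |||\pi(\xi^n)||| < \infty$ (the tail is dominated by a convergent geometric series), the Neumann series $\sum_{n \geq 0}\pi(\xi^n)$ converges in operator norm to a bounded inverse of $\Id - \pi(\xi)$, and I can set
\begin{equation*}
    v := \sum_{n = 0}^{\infty} \pi(\xi^n)\, b(\xi) \in L^p(G,V).
\end{equation*}

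With this $v$, I define $c(g) := b(g) - v + \pi(g)v$. It is a continuous $1$-cocycle (since $g \mapsto \pi(g)v$ is continuous, $\pi$ being a continuous representation, and since it differs from the cocycle $b$ by a coboundary), and $b - c = b_v \in B^1(G,\pi_p)$. Finally, the telescoping identity $(\Id - \pi(\xi))v = \sum_{n\geq 0}\pi(\xi^n)b(\xi) - \sum_{n\geq 0}\pi(\xi^{n+1})b(\xi) = b(\xi)$ gives $c(\xi) = b(\xi) - (\Id - \pi(\xi))v = 0$, as desired. The only genuinely nontrivial point is the passage from $|||\pi(\xi^n)||| \to 0$ to summability of the Neumann series through submultiplicativity; once the inverse of $\Id - \pi(\xi)$ is produced, everything else is formal bookkeeping with the cocycle and coboundary relations.
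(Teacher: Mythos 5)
Your proof is correct and essentially identical to the paper's: the paper shows that $b(\xi^n)=\sum_{i=0}^{n-1}\pi(\xi^i)b(\xi)$ is Cauchy (via contraction plus submultiplicativity of the operator norm) and corrects $b$ by the coboundary of its limit $f$, which is precisely your Neumann-series vector $v=\sum_{n\geq 0}\pi(\xi^n)b(\xi)$. Your only cosmetic difference is packaging the construction as inverting $\Id-\pi(\xi)$ and verifying $c(\xi)=0$ by telescoping, where the paper instead deduces it from $\|c(\xi^n)\|\to 0$ and the cocycle relation.
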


\begin{proof}
We first claim that the sequence $(b(\xi^n))_{n >0}$ is Cauchy in $L^p(G,V)$. To see this, first notice that this sequence is bounded, as the cocycle relation yields:
\begin{equation*}
    b(\xi^n) = \sum_{i=0}^{n-1} \pi(\xi^i) b(\xi)
\end{equation*}
and hence:
\begin{equation*}
    ||b(\xi^n)|| \leq \big(\sum_{i=0}^{n-1} |||\pi(\xi^i)||| \big) ||b(\xi)|| \leq C ||b(\xi)||
\end{equation*}
where $C = \sum_{i=0}^{\infty} |||\pi(\xi^i)||| < \infty$ converges as $\xi$ is $\pi_p$-contracting and the operator norm is submultiplicative. Now let $n \geq m \geq 0$. From the cocycle relation we have:
\begin{equation*}
    b(\xi^n) - b(\xi^m) = \pi(\xi^m) b(\xi^{n-m})
\end{equation*}
and hence:
\begin{equation*}
     ||b(\xi^n) - b(\xi^m)|| \leq ||| \pi(\xi^m)||| \,  ||b(\xi^{n-m})|| \leq C  ||| \pi(\xi^m)||| \, ||b(\xi)||.
\end{equation*}
Hence the sequence $(b(\xi^n))_{n >0}$ is Cauchy in $L^p(G,V)$ and converges to a limit function $f \in L^p(G,V)$. The formula $c(g) = b(g) - f + \pi(g) f$ for $g\in G$ defines a cocycle $c$ such that $b -c \in B^1(G, \pi_p)$. We now show that $c(\xi) = 0$. We first see that:
\begin{equation*}
    ||c(\xi^n)|| \leq ||b(\xi^n) - f || + ||\pi(\xi^n) f||
\end{equation*}
and hence $||c(\xi^n)|| \to 0$ as $n \to + \infty$. From the cocycle relation we have:
\begin{equation*}
    c(\xi^n) = c(\xi) + \pi(\xi) c(\xi^{n-1}).
\end{equation*}
Since $||c(\xi^n)|| \to 0$ and $||\pi(\xi) c(\xi^{n-1})|| \to 0$ we have that $c(\xi) = 0$.
\end{proof}

The next step is propagating the vanishing created by this proposition. This involves some non-isometric variant of Mautner's phenomenon (recall that the classical Mautner's lemma concerns unitary, hence isometric, representations \cite[II. 3.2]{margulis-book}). We formulate this variant in terms of the following dynamical interpretation of the Levi decomposition \cite[2.2]{prasad}.

\begin{defn}
Let $\xi \in G$. We define:
\begin{align*}
    U_\xi & = \{ h \in G  ,   \xi^{-n} h \xi^n  \xrightarrow{n \to  + \infty} 1_{G} \}, \\
    P_\xi & = \{ h \in G  , \textrm{ the sequence } (\xi^{-n} h \xi^n )_{n>0} \textrm{ is bounded} \}, \\
     Z_\xi & = \{ h \in G  , \xi^{-1} h \xi = h \}.
\end{align*}
\end{defn}

The sets $P_\xi$, $U_\xi$ and $Z_\xi$ are subgroups of $G$. The subgroup $P_\xi$ contains both $U_\xi$ and $Z_\xi$ and these satisfy $U_\xi \cap Z_\xi = \{ 1_G\}$. When $G$ is a semisimple group and $\xi$ an element of a maximal split torus of $G$, we have that $P_\xi$ is a parabolic subgroup of $G$ and $U_\xi$ is its unipotent radical \cite[2.4 (i)]{prasad}, hence the Levi decomposition of $P_\xi$ may be written as $P_\xi= Z_\xi \ltimes U_\xi$.

\begin{prop}(Mautner's phenomenon)\label{Mautner-analogue}
Let $\xi \in G$ and let $b \in Z^1(G, \pi_p)$ be such that $b(\xi) = 0$. \\
1. (Classical Mautner's lemma) If $\xi$ is $\pi_p$-bounded, then $b(h) = 0$ for all $h \in U_\xi$. \\
2. (Contracting version) If $\xi$ is $\pi_p$-contracting, then $b(h) = 0$ for all $h \in P_\xi$. \\
3. (Commuting version) If $\xi$ is $\pi_p$-bounded, then $b(h) = 0$ for all $h \in Z_\xi$.
\end{prop}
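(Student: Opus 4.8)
The plan is to reduce all three statements to a single transfer identity expressing $b(h)$ in terms of the cocycle evaluated at the conjugates $\xi^{-n}h\xi^n$. First I would record the two consequences of the hypothesis $b(\xi)=0$ that make the boundary terms disappear: iterating the cocycle relation gives $b(\xi^n)=\sum_{i=0}^{n-1}\pi(\xi^i)b(\xi)=0$ for all $n\geq 0$, and then $0=b(1_G)=b(\xi^{-n})+\pi(\xi^{-n})b(\xi^n)$ forces $b(\xi^{-n})=0$ as well.

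With these vanishings, expanding $b(\xi^{-n}h\xi^n)$ by the cocycle relation collapses to
\begin{equation*}
b(\xi^{-n}h\xi^n)=b(\xi^{-n})+\pi(\xi^{-n})b(h)+\pi(\xi^{-n}h)b(\xi^n)=\pi(\xi^{-n})b(h),
\end{equation*}
and applying $\pi(\xi^n)$ yields the key identity
\begin{equation*}
b(h)=\pi(\xi^n)\,b(\xi^{-n}h\xi^n)\qquad\text{for every }n\geq 0.
\end{equation*}
Each case is then obtained by letting $n\to+\infty$ and controlling one of the two factors on the right.

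For part 2 (contracting case), if $h\in P_\xi$ the conjugates $\xi^{-n}h\xi^n$ stay bounded, hence relatively compact since $G$ carries a proper metric; continuity of $b$ makes $(b(\xi^{-n}h\xi^n))_n$ bounded in $L^p(G,V)$, so $||b(h)||_p\leq |||\pi(\xi^n)|||_{L^p(G,V)}\,\sup_n||b(\xi^{-n}h\xi^n)||_p\to 0$ because $\xi$ is $\pi_p$-contracting. For part 1 (classical case), if $h\in U_\xi$ then $\xi^{-n}h\xi^n\to 1_G$, so $b(\xi^{-n}h\xi^n)\to b(1_G)=0$ by continuity, and the uniform bound $C=\sup_n|||\pi(\xi^n)|||_{L^p(G,V)}<\infty$ coming from $\pi_p$-boundedness gives $||b(h)||_p\leq C\,||b(\xi^{-n}h\xi^n)||_p\to 0$. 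For part 3 (commuting case), if $h\in Z_\xi$ then $\xi^{-n}h\xi^n=h$, so the identity degenerates to $b(h)=\pi(\xi^n)b(h)$; taking $n=1$ shows that $b(h)\in L^p(G,V)$ is $\pi(\xi)$-invariant, and Lemma \ref{Pi-bounded-invariance} then forces $b(h)=0$.

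The argument is largely mechanical once the transfer identity is established; the only points requiring care are the vanishing of $b$ on all integer powers of $\xi$ (needed to kill the boundary terms) and, in part 2, the passage from boundedness of the conjugates to relative compactness so that continuity of $b$ can be applied. The subtlest case is part 3, where the conjugation dynamics provide no decay whatsoever: there one genuinely needs Lemma \ref{Pi-bounded-invariance}, and in particular the hypothesis that $\xi$ generates a non-relatively-compact semigroup rather than merely having uniformly bounded operator norms.
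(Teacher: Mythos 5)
Your proof is correct and follows essentially the same route as the paper: both establish the transfer identity $b(h)=\pi(\xi^n)b(\xi^{-n}h\xi^n)$ from the cocycle relation and $b(\xi)=0$, then conclude case 1 via $\xi^{-n}h\xi^n\to 1_G$ with uniformly bounded operator norms, case 2 via boundedness of the conjugates with vanishing operator norms, and case 3 via $\pi(\xi)$-invariance of $b(h)$ and Lemma \ref{Pi-bounded-invariance}. Your only additions---making explicit the vanishing $b(\xi^{\pm n})=0$ that kills the boundary terms, and the relative-compactness remark in case 2---are details the paper leaves implicit, not a different argument.
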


\begin{proof}
Take $h \in G$. The cocycle relation implies:
\begin{equation*}
    b(\xi^{-1} h  \xi) = b(\xi^{-1} h ) = \pi(\xi^{-1}) b(h). 
\end{equation*}
Hence for all $n>0$ we obtain:
\begin{equation*}
    ||b(h)||_p \leq ||| \pi(\xi^n)|||_{L^p(G, V)} ||b(\xi^{-n} h \xi^n) ||_p.
\end{equation*}
1. Suppose that $h \in U_{\xi}$. Since $\xi$ is $\pi_p$-bounded, there exists $C>0$ such that $||b(h)||_p \leq C ||b(\xi^{-n} h \xi^n) ||_p$. Since $\xi^{-n} h \xi^n  \xrightarrow{n \to  + \infty} 1_{G}$, we have $||b(\xi^{-n} h \xi^n) ||_p  \xrightarrow{n \to  + \infty} 0$. Thus $b(h) = 0$.

2. Suppose that $h \in P_{\xi}$. The sequence $(\xi^{-n} h \xi^n)_n$ is bounded, so by continuity of $g \mapsto b(g) $ the term $||b(\xi^{-n} h \xi^n) ||_p$ is bounded. Since $\xi$ is $\pi_p$-contracting,  we have $\lim_{n \to \infty} ||| \pi(\xi^n)|||_{L^p(G, V)} = 0$ and hence $b(h)=0$.

3. Suppose that $h \in Z_{\xi}$. Hence the function $b(h)\in L^p(G, V)$ is $\pi(\xi)$-invariant. Since $\xi$ is $\pi_p$-bounded, Lemma \ref{Pi-bounded-invariance} gives $b(h) = 0$.
\end{proof}

\begin{rem}
    1. In what follows, we will use the contracting version most of the time. For groups of lower rank, we may not always dispose of enough contracting elements. In this case the commuting version can be very practical.

    2. For semisimple groups, we have $P_\xi = Z_\xi \ltimes U_\xi$. Hence in this case point 2 holds even when $\xi$ is only a $\pi_p$-bounded element, thanks to points 1 and 3.
\end{rem}

We come back to the setting of Corollary \ref{Hk(G,Lp(G))=Hk-1(R,Lp(R,H1(AN,Lp(AN))))}: let $G$ be a semisimple group, $\g$ a simple root of its restricted root system, $P_{\S \setminus \{ \g \} } = M^\g \ltimes H_\g$ the Levi decomposition of its corresponding maximal parabolic subgroup, $M^\g = K^\g A^\g N^\g $ be the Iwasawa decomposition of $M^\g $ and let $R^\g = A^\g \ltimes N^\g$. Recall: 
\begin{equation*}
    V_p : = H_\mathrm{ct}^1(H_\g, L^p(H_\g)).
\end{equation*} 
We apply the results of this section to the group $R^\g$ acting on $V_p$ via $\pi_0$ and on $L^p(R^\g, V_p)$ via $\pi$, where $\pi_0$ and $\pi$ are given by Corollary \ref{Hk(G,Lp(G))=Hk-1(R,Lp(R,H1(AN,Lp(AN))))}.

\begin{thm}\label{VanishingCriterion}
Suppose that there exists some $\pi_p$-contracting $\xi \in A^\g$ and that for each simple root $\s \in \S \setminus \{ \g \}$ there exists some $\pi_p$-bounded $\xi_\s \in A^\g$ such that $U_{\xi_\s} \cup Z_{\xi_\s}$ contains the root group $U_\s$. Then:
 \begin{equation*}
     H_\mathrm{ct}^{1}(R^\g, L^p(R^\g, V_p)) = \{0 \}.
 \end{equation*}
\end{thm}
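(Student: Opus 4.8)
The plan is to prove that $H^1_\mathrm{ct}(R^\g, L^p(R^\g, V_p)) = \{0\}$ by showing that every cocycle $b \in Z^1(R^\g, \pi_p)$ is in fact a coboundary, and then invoking the Hausdorff-ness guaranteed by the existence of a $\pi_p$-contracting element. First I would fix an arbitrary cocycle $b \in Z^1(R^\g, \pi_p)$ and apply Proposition \ref{invarianceByBCElements} to the $\pi_p$-contracting element $\xi \in A^\g$: this replaces $b$ by a cohomologous cocycle $c$ with $c(\xi) = 0$, so without loss of generality I may assume $b(\xi) = 0$. The strategy is then to propagate this single vanishing across all of $R^\g$ using the Mautner phenomenon (Proposition \ref{Mautner-analogue}), and conclude that $b$ vanishes identically, hence is a coboundary (the zero one).

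The core of the argument is the propagation step. Since $\xi$ is $\pi_p$-contracting, part 2 of Proposition \ref{Mautner-analogue} immediately gives $b(h) = 0$ for all $h \in P_\xi$, where $P_\xi \supseteq Z_\xi \cup U_\xi$. The group $R^\g = A^\g N^\g$ is generated by the torus $A^\g$ together with the root groups $U_\s$ for $\s$ ranging over the simple roots in $\S \setminus \{\g\}$ (and more generally over $\Phi_I^+$, but the simple root groups generate $N^\g$). The hypothesis tells me that each such $U_\s$ is contained in $U_{\xi_\s} \cup Z_{\xi_\s}$ for some $\pi_p$-bounded $\xi_\s \in A^\g$. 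For $h \in U_\s$, I would split into the two cases: if $h \in U_{\xi_\s}$, I apply part 1 of Proposition \ref{Mautner-analogue}; if $h \in Z_{\xi_\s}$, I apply part 3. Both parts require $b(\xi_\s) = 0$, which holds because $\xi_\s \in A^\g$ and $A^\g$ is abelian, so $\xi_\s \in Z_\xi$, whence $b(\xi_\s) = 0$ by the already-established vanishing on $P_\xi \supseteq Z_\xi$. Thus $b$ vanishes on every $U_\s$, and together with $b|_{A^\g} = 0$ (again because $A^\g \subseteq Z_\xi$) this shows $b$ vanishes on a generating set of $R^\g$.

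To finish I would use that $b$ is a cocycle vanishing on a generating set to conclude $b \equiv 0$ on the subgroup generated by that set, namely all of $R^\g$. Concretely, the cocycle relation $b(gh) = b(g) + \pi(g) b(h)$ shows that the set $\{g \in R^\g : b(g) = 0\}$ is a subgroup: it is closed under products since $b(g) = b(h) = 0$ forces $b(gh) = 0$, and it contains inverses since $b(g^{-1}) = -\pi(g^{-1}) b(g)$. As this subgroup contains $A^\g$ and all the $U_\s$ which generate $R^\g$, it is all of $R^\g$, so $b \equiv 0$. Therefore $Z^1(R^\g, \pi_p) = B^1(R^\g, \pi_p) = \{0\}$ and the cohomology vanishes.

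The main obstacle I anticipate is the bookkeeping in the propagation step: verifying carefully that the root groups $U_\s$ for $\s \in \S \setminus \{\g\}$ genuinely generate $N^\g$ (so that together with $A^\g$ they generate $R^\g$), and confirming that the hypotheses $b(\xi_\s) = 0$ are available before applying parts 1 and 3 of the Mautner phenomenon — this is where the abelianity of $A^\g$ and the inclusion $A^\g \subseteq Z_\xi$ do the crucial work. One subtlety worth checking is whether the hypothesis as stated (covering only simple root groups $U_\s$) suffices, or whether one needs to first propagate to the non-simple positive root groups in $N^\g$; I expect the subgroup-generation argument above resolves this cleanly, since once $b$ vanishes on the simple root groups and $A^\g$, it vanishes on everything they generate.
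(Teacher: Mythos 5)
Your proposal is correct and follows essentially the same route as the paper's proof: normalize $b(\xi)=0$ via Proposition \ref{invarianceByBCElements}, use that $A^\g$ is abelian so $A^\g \subseteq P_\xi$ and Proposition \ref{Mautner-analogue} (part 2, then parts 1 and 3 with the $\xi_\s$) to propagate the vanishing to $A^\g$ and the simple root groups $U_\s$, which generate $R^\g$. The only cosmetic slip is the closing claim $Z^1(R^\g,\pi_p)=B^1(R^\g,\pi_p)=\{0\}$: the argument shows every cocycle is cohomologous to zero, i.e.\ $Z^1 = B^1$ (not that both are zero), which is exactly what vanishing of $H^1_\mathrm{ct}$ requires.
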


\begin{proof}
Let $b \in Z^1(R^\g, \pi_p)$.
By Proposition \ref{invarianceByBCElements}, the cocycle $b$ can be chosen (without changing its cohomology class) so that $b(\xi) = 0$. Since the group $A^\g$ is abelian, the group $P_{\xi}$ contains $A^\g$ and thus by Proposition \ref{Mautner-analogue} point 2, we have $b(h) = 0$ for $h \in  A^\g$. This implies that $b(\xi_\s) = 0$ for all $\s \in \S \setminus \{ \g \}$. By Proposition \ref{Mautner-analogue} points 1 and 3, we have that $b(h) = 0$ for all $ h$ the sets $(U_{\xi_\s} \cup Z_{\xi_\s}) \cap R^\g$, which contain the root subgroups $U_{\s}$ for all $\s \in \S \setminus \{ \g \}$. The groups $U_{\s}$ for $\s \in \S \setminus \{ \g \}$ generate the group $N^\g$ \cite[21.9 (ii)]{borel-algebraic-groups}, therefore $b(h) = 0$ for all $h \in N^\g$. Thus $b = 0$. We conclude that $H_\mathrm{ct}^{1}(R^\g, L^p(R^\g, V_p)) = \{0 \}$ for every $p>1$.
\end{proof}

\subsection{Vanishing for semisimple, non-simple groups of rank $\geq 3$}

In this section we apply our vanishing criterion to semisimple, non-simple groups. The main point is that commutation makes some operator norms of $\pi_0$ to be equal to 1, hence we may reason directly as in \cite{cornulier-tessera}.

\begin{thm}\label{Vanishing for semisimple non-simple  groups}
    Let $G$ be a semisimple, non-simple group of rank $r \geq 3$ over a local field $F$. Then there exists a simple root $\g \in \S$ such that for all $p > \max \{  \Cdim(\partial H_\g), 1 \}$ we have:
    \begin{equation*}
        H_\mathrm{ct}^{2}(G, L^p(G)) = \{0 \}.
    \end{equation*}
\end{thm}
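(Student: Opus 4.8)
The plan is to reduce to the vanishing criterion of Theorem \ref{VanishingCriterion} by exploiting the product structure of a non-simple semisimple group. Write $G$ as an almost direct product $G = G_1 G_2$ of two semisimple factors (regrouping factors if necessary), with restricted root system $\Phi = \Phi_1 \sqcup \Phi_2$, where $\Phi_i$ is the root system of $G_i$ and the two factors commute. The key structural consequence of commutation is that if $\a \in \Phi_1$ and $\b \in \Phi_2$ then $[\ag_\a, \ag_\b] = 0$, so the root group $U_\b$ is \emph{centralized} by any torus element living only in the $G_1$-direction. This is exactly what will make certain operator norms of $\pi_0$ equal to $1$ and let us produce $\pi_p$-bounded elements with large fixed-point sets, just as in \cite{cornulier-tessera}.

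First I would choose the simple root $\g$. Since $r \geq 3$ and $G$ is non-simple, at least one factor, say $G_1$, has rank $\geq 2$, or else both factors have rank $\geq 1$ with the total being $\geq 3$; in any case I pick $\g$ to be a simple root belonging to the \emph{smaller-rank} factor, $\g \in \S_2 \subseteq \Phi_2$, chosen so that $\S \setminus \{\g\}$ still contains all simple roots of $G_1$ together with the remaining simple roots of $G_2$. Then $M^\g$ contains the whole factor $G_1$ as a commuting subgroup, and the Iwasawa piece $A^\g$ contains a maximal split torus direction for $G_1$. With this choice, I apply Corollary \ref{Hk(G,Lp(G))=Hk-1(R,Lp(R,H1(AN,Lp(AN))))} to identify $H_\mathrm{ct}^2(G, L^p(G))$ with $H_\mathrm{ct}^1(R^\g, L^p(R^\g, V_p))$ for $p > \max\{\Cdim(\partial H_\g), 1\}$, and aim to show the latter vanishes via Theorem \ref{VanishingCriterion}.

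The two hypotheses of Theorem \ref{VanishingCriterion} are verified as follows. For each simple root $\s \in \S \setminus \{\g\}$ I must exhibit a $\pi_p$-bounded $\xi_\s \in A^\g$ with $U_\s \subseteq U_{\xi_\s} \cup Z_{\xi_\s}$. If $\s$ lies in the factor $G_1$, I take $\xi_\s$ in the split torus of $G_2$ (commuting with $G_1$): then $\xi_\s$ centralizes $U_\s$, so $U_\s \subseteq Z_{\xi_\s}$, and because $\xi_\s$ commutes with everything in the $H_\g$-direction coming from $G_1$, the relevant operator norm $|||\pi_0(\xi_\s^n)|||$ stays equal to $1$ by Proposition \ref{generalGrowthEstimate} combined with $\D_{R^\g}(\xi_\s) = 1$ in that direction, making $\xi_\s$ genuinely $\pi_p$-bounded. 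If $\s$ lies in the same factor as $\g$, I instead choose $\xi_\s$ in $A^\g$ so that $\g(\xi_\s)$ contracts while $\s(\xi_\s)$ is arranged to place $U_\s$ in $U_{\xi_\s}$, using that within a single higher-rank factor there is enough freedom in the torus. Finally, the existence of a globally $\pi_p$-contracting $\xi \in A^\g$ follows by taking a torus element that contracts $H_\g$ in every positive-$n_\g$ direction; here the commuting structure guarantees the modular contraction of $\D_{R^\g}$ dominates the dilation of $\pi_0$, since the dilating directions coming from the commuting factor $G_1$ contribute trivially (operator norm $1$) to $|||\pi_0|||$.

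The main obstacle I anticipate is the honest verification that the candidate elements are $\pi_p$-\emph{bounded} rather than merely fixing $U_\s$ set-theoretically: boundedness is a statement about $|||\pi(\xi_\s^n)|||_{L^p(R^\g, V_p)}$, which by Proposition \ref{generalGrowthEstimate} equals $\D_{R^\g}(\xi_\s)^{-n/p} |||\pi_0(\xi_\s^n)|||_{V_p}$, and one must check both that $\D_{R^\g}(\xi_\s) = 1$ and that $|||\pi_0(\xi_\s^n)|||_{V_p}$ stays bounded. The commutation of the two factors is exactly the mechanism that forces both quantities to behave well — it makes the $G_2$-direction torus act on the $G_1$-part of $V_p = H_\mathrm{ct}^1(H_\g, L^p(H_\g))$ isometrically — but making this precise requires tracking how $A^\g$ acts on $V_p$ through the Heintze group $H_\g$, and confirming that the contracting element $\xi$ really achieves $|||\pi(\xi^n)||| \to 0$. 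This last point is the genuinely delicate step and is precisely where the non-simple case is easier than the simple case treated later: here we never need the quantitative combinatorial estimates of Sections 3 and 4, because commutation hands us operator norms equal to $1$ for free.
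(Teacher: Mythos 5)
You have correctly identified the paper's mechanism — commutation between factors forces operator norms of $\pi_0$ to equal $1$, so contraction comes from the modular function alone and none of the quantitative estimates of Sections 3--4 are needed — but your implementation contains a factor mix-up that breaks the argument as written. With your choice $\g \in \S_2$, the Heintze group $H_\g = S_\g \ltimes U_\g$ lies in $G_2$, not $G_1$: every root $\a$ with $n_\g(\a) > 0$ belongs to $\Phi_2$, so $\mathrm{Lie}(U_\g) \subseteq \ag_2$. Consequently your sentence ``$\xi_\s$ commutes with everything in the $H_\g$-direction coming from $G_1$'' is based on a false premise: an element $\xi_\s$ of the split torus of $G_2$ acts on $H_\g$ by a genuinely nontrivial conjugation, dilating some root spaces of $U_\g$, and there is no reason for $|||\pi_0(\xi_\s^n)|||_{V_p}$ to stay equal to $1$, nor for $\D_{R^\g}(\xi_\s)$ to equal $1$. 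Controlling $|||\pi_0|||_{V_p}$ for torus elements acting nontrivially on $H_\g$ is precisely the hard quantitative problem (Theorem \ref{CohomologyGrowthThm} and the case-by-case study), which you explicitly claim to avoid. The same gap appears in your case ``$\s$ in the same factor as $\g$'': arranging $U_\s \subseteq U_{\xi_\s}$ by ``freedom in the torus'' says nothing about $\pi_p$-boundedness of $\xi_\s$, and in fact a coweight dual to $\s$ that contracts the modular function pushes $U_\s$ \emph{out} of $P_{\xi_\s}$, so a single such element cannot simultaneously be bounded and cover $U_\s$.

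The paper's proof avoids all of this by taking the auxiliary elements in the torus of the part of the Levi that commutes with $H_\g$, which by the rank hypothesis can always be arranged to have rank $\geq 2$. Concretely: if some factor has rank $1$, choose $\g$ there, so that \emph{all} of $M^\g$ commutes with $H_\g$ and Proposition \ref{generalGrowthEstimate} gives $|||\pi(g)||| = \D_{R^\g}(g)^{-1/p}$ on the nose; otherwise all factors have rank $\geq 2$, and one chooses $\g$ in $G_1$ and two coweights $\xi_1, \xi_2$ in the torus of $G_2$ dual to distinct simple roots $\s_1, \s_2 \in \S_2$ with $|\s_i(\xi_i)| < 1$, making both $\pi_p$-contracting for every $p > 1$. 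Coverage of all root groups is then automatic: $U_\s$ for $\s$ in $\g$'s own factor (or any third factor) centralizes the $\xi_i$, hence lies in $Z_{\xi_i}$, while $U_{\s_1} \subseteq P_{\xi_2}$ and $U_{\s_2} \subseteq P_{\xi_1}$ — so your problematic case never arises. In your labeling the fix is simply to swap: keep $\g \in \S_2$ but take the $\xi$'s as coweights in the $G_1$-torus (rank $\geq 2$) dual to two simple roots of $\S_1$; these commute with $H_\g \subseteq G_2$, so boundedness and contraction follow from $\D_{R^\g}$ alone, and every $U_\s$ with $\s \in \S \setminus \{\g\}$ lands in $Z_{\xi_1} \cup Z_{\xi_2} \cup P_{\xi_1} \cup P_{\xi_2}$. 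As submitted, however, the verification of the hypotheses of Theorem \ref{VanishingCriterion} does not go through, so this is a genuine gap rather than a variant proof.
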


\begin{proof}

    Let $G_1, \ldots,  G_k$ be the simple factors of the group $G$. We split the proof in two cases. First suppose that there exists one factor $G_i$ of rank 1, associated to a simple root $\g \in \S$. The maximal parabolic subgroup $P_\g$ decomposes as $ P_\g = M^\g \ltimes H_\g$ and in this case $M^\g$ and $H_\g$ live in different factors, hence $M^\g$ and $H_\g$ commute, so $ P_\g = M^\g \times H_\g$. In particular, the action by conjugation by some element $g \in R^{\g}$ on $H_\g$ is trivial, and hence Proposition \ref{generalGrowthEstimate} gives:
    \begin{equation*}
        |||\pi(g)|||_{L^p(R^\g, V_p)} = \Delta_{R^\g}(g)^{-1}.
    \end{equation*}
    Since $r \geq 3$, the rank of $M^\g$, is at least 2 which means that the split torus $A^{\g}$ is of dimension at least 2. It is thus enough to consider two coweights $\xi_1, \xi_2 \in A^{\g}$ associated to two distinct simple roots $\s_1, \s_2 \in \S \setminus \{ \g\}$ satisfying $ \s_i(\xi_i) < 1$. In this way $\Delta_{R^\g}(\xi_i)^{-1} < 1$ and hence $\xi_1, \xi_2$ are $\pi_p$-contracting for all $p> 1$. The subgroup $P_{\xi_1}$ contains all the root groups $U_\s$ for $\s \in \S \setminus \{\g, \s_1\}$ and $P_{\xi_2}$ contains all the root groups $U_\s$ for $\s \in \S \setminus \{\g, \s_2\}$. Hence the conditions of Theorem \ref{VanishingCriterion} are satisfied and we showed:
    \begin{equation*}
         H_\mathrm{ct}^{1}(R^\g, L^p(R^\g, V_p)) = \{0 \}
    \end{equation*}
    for all $p>1$. Then the spectral sequence in Corollary \ref{Hk(G,Lp(G))=Hk-1(R,Lp(R,H1(AN,Lp(AN))))} gives the desired vanishing for $p > \Cdim(\partial H_\g)$.

    Suppose now that all factors have rank $\geq 2$. We may pick any simple root $\g$ in the root system of $G_1$. The parabolic subgroup $P_\g$ decomposes as $P_\g = M^{\g} \ltimes H_\g$, where the Levi factor $M^{\g}$ decomposes again in simple factors $M_1, \ldots, M_k$, with $M_i \subseteq G_i$ for all $i$. The main point is that the factor $M_2$ has rank $\geq 2$ and commutes with $H_\g$. Our hypothesis implies that there are at least two simple roots $\s_1, \s_2$ in the root system of $G_2$. Hence we may consider again coweights associated to these two simple roots  and reason as before to conclude that $H_\mathrm{ct}^{1}(R^\g, L^p(R^\g, V_p)) = \{0 \}$ for all $p> 1$. Again the spectral sequence gives the desired vanishing for $p > \Cdim(\partial H_\g)$. This second part of the proof is independent of the choice of root $\g$ and hence we can choose $\g$ in order to minimize $\Cdim(\partial H_\g)$.
\end{proof}

\section{Growth estimates and Heintze groups}

In Theorem \ref{VanishingCriterion} we proved that under the presence of enough contractions, we can show vanishing of cohomology. One would like to have a criterion to guarantee the existence of contracting elements for the $R^\g$-module $L^p(R^\g, H_\mathrm{ct}^1(H_\g, L^p(H_\g)))$ appearing in Corollary \ref{Hk(G,Lp(G))=Hk-1(R,Lp(R,H1(AN,Lp(AN))))}. Thanks to Proposition \ref{generalGrowthEstimate}, the only mysterious quantity that remains to study is the operator norm of the action $\pi_0$ of $R^\g$ on the space $H_\mathrm{ct}^1(H_\g, L^p(H_\g))$. In this section we find an upper bound of this norm that can be computed explicitly in combinatorial terms.

\subsection{Amenable hyperbolic groups with homotheties}

To find an upper bound on the operator norms of the action $\pi_0$ of $R^\g$ on the space $H_\mathrm{ct}^1(H_\g, L^p(H_\g))$, we will first obtain some preliminary results in the more general setting of amenable hyperbolic groups and contracting automorphisms developed in \cite{caprace-cornulier-tessera-monod}, with the supplementary condition that we will contract using a homothety.

Let $U$ be a locally compact second countable group. By \cite{struble}, the group $U$ admits a proper left-invariant metric $d$ metrizing its topology. Suppose moreover that $(U,d)$ has a (non-trivial) \textit{homothety} $\a \in \mathrm{Aut}(U)$, that is, a group automorphism such that for some $\lambda > 1$ we have:
\begin{equation*}
    d(\a(x), \a(y)) = \lambda^{-1} d(x,y).
\end{equation*}
By \cite[6.5]{caprace-cornulier-tessera-monod}, the group $U$ is nilpotent and unimodular. Denote by $\Omega$ the compact unit ball of $(U,d)$ centered at $e_U$ and write $|| x || = d_U(x,e)$. Notice that $\bigcap_{k \geq 0} \a^k(\Omega) = \{ e_{U} \}$ and $\bigcup_{k \leq 0} \a^k(\Omega) = U$.

The set $S = \{\a^{\pm 1}\} \times \Omega $ generates the group $H = \langle \a \rangle \ltimes U$. The group $H$ is amenable and Gromov-hyperbolic when endowed with the word metric $|\cdot |_S$ with respect to $S$. Denote by $B_S(n)$ the closed ball of radius $n$ centered at $e_H$.  

The following lemmata come from \cite{caprace-cornulier-tessera-monod} and allow us to estimate distortion of $(U,d)$ inside $(H, | \cdot | _S)$. Notice in particular that only the upper bound needs to have a homothety.

\begin{lem}\cite[Lemma 4.7]{caprace-cornulier-tessera-monod} For $m \geq 0$, we have:
\begin{equation*}
    \a^{-m}(\Omega) \subseteq B_S(2m +1) \cap U.
\end{equation*}
\end{lem}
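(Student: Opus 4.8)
The plan is to produce, for each $x \in \a^{-m}(\Omega)$, an explicit word of length at most $2m+1$ in the generating set $S$, and to read off membership in $B_S(2m+1) \cap U$ directly from it.

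First I would unwind the definition of $\a^{-m}(\Omega)$: an element $x \in U$ lies in $\a^{-m}(\Omega)$ precisely when $\omega := \a^m(x)$ belongs to $\Omega$, and this single element $\omega$ is then a generator since $\Omega \subseteq S$. The key step is the conjugation relation inside the semidirect product $H = \langle \a \rangle \ltimes U$: for every $u \in U$ one has $\a^{-m} u\, \a^{m} = \a^{-m}(u)$, where the right-hand side denotes the automorphism $\a^{-m}$ applied to $u$ (this follows by iterating $\a u \a^{-1} = \a(u)$). Taking $u = \omega$ yields the identity
\begin{equation*}
    x = \a^{-m}(\omega) = \a^{-m}\,\omega\,\a^{m}
\end{equation*}
in $H$. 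This writes $x$ as a product of $m$ letters $\a^{-1}$, a single letter $\omega \in \Omega$, and $m$ letters $\a$, all of which belong to $S$; hence $|x|_S \leq m + 1 + m = 2m+1$, that is, $x \in B_S(2m+1)$. Since $x \in U$ by construction, the inclusion $\a^{-m}(\Omega) \subseteq B_S(2m+1) \cap U$ follows.

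I do not expect a genuine obstacle here: the argument is a one-line word computation, and the only delicate point is keeping the semidirect-product convention ($\a u \a^{-1} = \a(u)$) consistent, which fixes the placement of the exponents but not the length count $2m+1$. It is worth noting that the word we build is constructed purely algebraically and does not itself invoke the homothety property; the contraction ratio $\l$ enters only when one identifies $\a^{-m}(\Omega)$ with the $d$-ball of radius $\l^m$, via $\| \a^m(x) \| = \l^{-m}\| x \|$, thereby reinterpreting the inclusion as the upper bound $|x|_S \lesssim \log_\l \| x \|$ on the distortion of $(U,d)$ inside $(H, |\cdot|_S)$.
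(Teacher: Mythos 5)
Your proof is correct and coincides with the intended one: the paper states this lemma without proof, quoting \cite[Lemma 4.7]{caprace-cornulier-tessera-monod}, whose argument is precisely your conjugation identity $x = \alpha^{-m}\,\omega\,\alpha^{m}$ in $H$ with $\omega = \alpha^{m}(x) \in \Omega$, yielding $|x|_S \leq 2m+1$. The only cosmetic point is the reading of the generating set $S = \{\alpha^{\pm 1}\} \times \Omega$: interpreting it so that $\Omega$ and $\alpha^{\pm 1}$ are available as letters (as you do, and as the $m=0$ case of the statement forces, using $e_U \in \Omega$) gives exactly your count $m+1+m = 2m+1$.
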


\begin{lem}\label{geodesics in Heinzte group} \cite[Lemma 4.8]{caprace-cornulier-tessera-monod}
There exists $C > 0$ such that for all $x \in U$, there exist $i, j \in \na$ with $j \leq C$ and $x_1, \ldots, x_j \in \Omega$ such that $x = \a^{-i} x_1 \ldots x_j  \a^i$ and $2i \leq |x |_S + C$.
\end{lem}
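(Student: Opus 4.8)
The plan is to read the decomposition directly off a geodesic word for $x$ in $H$; the exponent bound $2i \le |x|_S + C$ and the membership $x_m \in \Omega$ will come out essentially for free, and the only genuine difficulty will be controlling the number of factors $j$.

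First I would fix a geodesic word $w = s_1 \cdots s_L$ representing $x$, where $L = |x|_S$ and each $s_k$ lies in $S$, so that either $s_k \in \{\a^{\pm 1}\}$ or $s_k = \omega_k \in \Omega$. I record the \emph{height profile} $h_0 = 0, h_1, \ldots, h_L = 0$, where $h_k$ is the total $\a$-exponent of the prefix $s_1 \cdots s_k$ (it goes up by $1$ on an $\a$, down by $1$ on an $\a^{-1}$, and is unchanged on an $\Omega$-letter), the final value being $0$ because $x \in U$. Collecting every $\a^{\pm 1}$ on the right by means of the semidirect-product relation $\a u \a^{-1} = \a(u)$, one rewrites $x$ as the ordered product of its $\Omega$-letters, each conjugated to the height at which it occurs:
\[
x = \prod_{m=1}^{M} \a^{g_m}(\omega_m),
\]
where $\omega_1, \ldots, \omega_M \in \Omega$ are the $\Omega$-letters of $w$ and $g_m$ the corresponding height. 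I set $i := -\min_{0 \le k \le L} h_k \ge 0$.

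The exponent estimate is then immediate. Since the height walk starts and ends at $0$ and attains the value $-i$, it must use at least $i$ down-steps and, in returning to $0$, at least as many up-steps, so at least $2i$ of the letters are of type $\a^{\pm 1}$; hence $2i \le L = |x|_S$, which is stronger than the required $2i \le |x|_S + C$. Conjugating the whole product up by $\a^i$ and using that $g_m \ge -i$, so $i+g_m \ge 0$, together with the contraction $\|\a(y)\| = \lambda^{-1}\|y\|$, each factor
\[
x_m := \a^{\,i+g_m}(\omega_m) \quad\text{satisfies}\quad \|x_m\| \le \lambda^{-(i+g_m)}\|\omega_m\| \le 1,
\]
so $x_m \in \Omega$ and $x = \a^{-i}\,x_1 \cdots x_M\,\a^i$. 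This already produces the asserted form; it remains only to replace the $M$ factors by at most $C$ of them.

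The heart of the matter, and the step I expect to be the main obstacle, is that $M$ is a priori only bounded by $L$: the crude estimate $\|\a^i(x)\| \le \sum_m \lambda^{-(i+g_m)}$ grows with $L$ because it does not prevent many $\Omega$-letters from piling up near the bottom height $-i$, and increasing $i$ to crush them would cost a $\log$-term in the exponent and violate $2i \le |x|_S + C$. The plan is to invoke the Gromov-hyperbolicity of $H$ to show that a geodesic word performs only boundedly many horizontal moves at each level near its deepest point; concretely, that the number $n_g$ of $\Omega$-letters at a fixed height $g$ is bounded by a universal constant, since a geodesic between two points of $U$ cannot linger at one level without yielding a shortcut. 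Summing the geometric series,
\[
\|\a^i(x)\| \le \sum_{g} n_g\,\lambda^{-(i+g)} \le \Big(\sup_g n_g\Big)\sum_{t \ge 0}\lambda^{-t} =: R,
\]
so $\a^i(x) = x_1 \cdots x_M$ lies in a ball $B_U(e,R)$ of fixed radius. Finally, as $\Omega$ is a compact identity neighbourhood and $d$ is coarsely geodesic, $B_U(e,R)$ is contained in a bounded product $\Omega^{C}$; rewriting $\a^i(x)$ as a product of at most $C$ elements of $\Omega$ and conjugating back by $\a^{-i}$ gives $x = \a^{-i} x_1' \cdots x_j' \a^i$ with $j \le C$. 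The delicate input throughout is the level-wise bound $n_g \le \mathrm{const}$, for which the preceding lemma and the thin-triangle property of $H$ (forcing the geodesic to descend to depth $\approx \tfrac12 |x|_S$ and to spend only $O(1)$ horizontal steps there) are the essential tools.
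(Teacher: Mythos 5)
Your first half is sound: the height profile, the choice $i=-\min_k h_k$, the count $2i\le|x|_S$, and the observation that $x_m=\alpha^{\,i+g_m}(\omega_m)\in\Omega$ because $i+g_m\ge 0$ and $\alpha(\Omega)\subseteq\Omega$ are all correct. But the step you yourself single out as the crux --- the level-wise bound $n_g\le\mathrm{const}$ for geodesic words --- is never proved: ``thin triangles force $O(1)$ horizontal steps at each level'' is a heuristic, and making it rigorous is essentially the content of the lemma. Beware also of a circularity trap: you cannot import the upper distortion bound of Proposition \ref{bounds for distortion} to control how long a geodesic lingers, since in the paper that upper bound is \emph{deduced from} this lemma; only Lemma \ref{geodesics in Heinzte group}'s predecessor (Lemma 4.7, i.e.\ $\alpha^{-n}(\Omega)\subseteq B_S(2n+1)$) is available. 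Fortunately a cruder \emph{global} count closes the gap using only that lemma, with no hyperbolicity at all: in your notation a geodesic word for $x$ (with $\|x\|>1$, the other case being trivial) has $D$ down-steps, $D$ up-steps and $M$ horizontal letters, so $|x|_S=2D+M$; subadditivity of $\|\cdot\|$ and $\|\alpha^{g}(\omega)\|=\lambda^{-g}\|\omega\|\le\lambda^{D}$ give $\|x\|\le M\lambda^{D}$; and $x\in\alpha^{-n}(\Omega)$ for $n=\lceil\log_\lambda\|x\|\rceil$, so Lemma 4.7 gives $|x|_S\le 2n+1\le 2\log_\lambda\|x\|+3$. Combining,
\begin{equation*}
2D+M \;\le\; 2\log_\lambda\bigl(M\lambda^{D}\bigr)+3 \;=\; 2D+2\log_\lambda M+3,
\end{equation*}
whence $M\le 2\log_\lambda M+3$ and $M$ is bounded by a constant depending only on $\lambda$. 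You may then simply take $j=M$ and $i\le D$, and the lemma follows with no geometric series and no merging step.

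Your final step is also genuinely wrong as stated in half of the paper's setting: in the non-Archimedean case $d$ is an ultrametric on a totally disconnected group, in no way coarsely geodesic, and $\Omega$ is a compact open \emph{subgroup}, so $\Omega^{C}=\Omega$ while $B_U(e,R)\supsetneq\Omega$ for $R>1$; the inclusion $B_U(e,R)\subseteq\Omega^{C}$ fails. The correct repair uses the homothety itself: $B_U(e,R)=\alpha^{-k}\bigl(B_U(e,\lambda^{-k}R)\bigr)\subseteq\alpha^{-k_R}(\Omega)$ once $\lambda^{-k_R}R\le 1$, so the defect is absorbed into the conjugating exponent, replacing $i$ by $i+k_R$ at the cost of the additive constant $2k_R$ in $2i\le|x|_S+C$ (this would even yield $j=1$). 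Note finally that the paper itself offers no proof --- it quotes \cite[Lemma 4.8]{caprace-cornulier-tessera-monod} --- so your attempt should be measured against that reference; with the two repairs above your scheme does become a complete and rather elementary proof.
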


\begin{prop}\label{bounds for distortion}
There exists $C > 1$ such that for all $x \in U$:
\begin{equation*}
 C^{-1} \l^{\frac{1}{2} |x |_S } \leq  || x || \leq C\l^{\frac{1}{2} |x |_S }.
\end{equation*}
\end{prop}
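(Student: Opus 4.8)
The plan is to extract both inequalities directly from the two lemmas that precede the statement, translating the combinatorial information about word length in $H$ into metric information about $(U,d)$ via the homothety $\a$. The key fact I will use repeatedly is that $\a$ scales the metric by $\l^{-1}$, so that for any $y \in U$ and any integer $i$ we have $d(\a^{-i}(y), e_U) = \l^{i} d(y, e_U)$, i.e. $\|\a^{-i}(y)\| = \l^{i}\|y\|$; applying $\a^{-i}$ \emph{expands} by $\l^{i}$.

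First I would prove the upper bound using Lemma \ref{geodesics in Heinzte group}. Given $x \in U$, write $x = \a^{-i} x_1 \cdots x_j \a^{i}$ with $x_1, \ldots, x_j \in \Omega$, $j \leq C$ and $2i \leq |x|_S + C$. Setting $y = x_1 \cdots x_j \in U$, we have $\|y\| \leq \sum_{k=1}^{j}\|x_k\| \leq j \leq C$ by the triangle inequality and the fact that each $x_k \in \Omega$ has $\|x_k\| \leq 1$. Since $x = \a^{-i}(y)$, I get $\|x\| = \l^{i}\|y\| \leq C\,\l^{i} \leq C\,\l^{(|x|_S + C)/2} = \bigl(C\,\l^{C/2}\bigr)\,\l^{\frac{1}{2}|x|_S}$, which is the desired upper bound after enlarging the constant to $C\,\l^{C/2}$.

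For the lower bound I would argue contrapositively using the first lemma, $\a^{-m}(\Omega) \subseteq B_S(2m+1)\cap U$. Fix $x \in U$ and let $m \geq 0$ be the unique integer with $\l^{m} \leq \|x\| < \l^{m+1}$ (such $m$ exists once $\|x\| \geq 1$; the finitely many $x$ with small norm are handled by absorbing them into the constant, since both sides stay bounded away from $0$ and $\infty$ on the compact shell). Then $\a^{m}(x)$ satisfies $\|\a^{m}(x)\| = \l^{-m}\|x\| < \l$, so after one more application of $\a$ we land inside $\Omega$; more directly, $\|\a^{m+1}(x)\| = \l^{-m-1}\|x\| < 1$, hence $\a^{m+1}(x) \in \Omega$, which gives $x = \a^{-(m+1)}\bigl(\a^{m+1}(x)\bigr) \in \a^{-(m+1)}(\Omega) \subseteq B_S(2m+3)\cap U$. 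Therefore $|x|_S \leq 2m+3$, so $\l^{\frac{1}{2}|x|_S} \leq \l^{m + 3/2} = \l^{3/2}\,\l^{m} \leq \l^{3/2}\,\|x\|$, yielding $\|x\| \geq \l^{-3/2}\,\l^{\frac{1}{2}|x|_S}$, the lower bound after again adjusting the constant.

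I do not expect a genuine obstacle here: both bounds are bookkeeping around the scaling identity $\|\a^{-i}(y)\| = \l^{i}\|y\|$ and the two quoted lemmas. The only points requiring mild care are (i) uniformizing the constant $C$ so that a single $C$ works for both inequalities and for all $x$ simultaneously, including the bounded region $\|x\| < 1$ where the floor/ceiling choice of $m$ breaks down, and (ii) keeping the directions of the inequalities straight, since $\a$ contracts the metric while $\a^{-1}$ expands it, and it is the expanding direction that converts a word-length bound into a metric bound. Once these are pinned down, taking the maximum of the two constants obtained above completes the proof.
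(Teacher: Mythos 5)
Your argument is correct and is essentially the paper's own proof: the upper bound comes from Lemma \ref{geodesics in Heinzte group} together with the scaling identity $||\a^{-i}(y)|| = \l^{i}||y||$, and the lower bound from the inclusion $\a^{-m}(\Omega) \subseteq B_S(2m+1) \cap U$, with only cosmetic differences (you pick $m$ by $\l^m \leq ||x|| < \l^{m+1}$ where the paper picks the minimal $m$ with $x \in \a^{-m}(\Omega)$, and your constants differ by bounded factors). One small caveat: your parenthetical fix for the region $||x|| < 1$ does not actually work for the lower bound --- for $x \neq e_U$ tending to $e_U$ one has $||x|| \to 0$ while $\l^{\frac{1}{2}|x|_S} \geq \l^{1/2}$, so no constant absorbs this region --- but the paper's proof tacitly restricts to $m \geq 0$ and has exactly the same blind spot, which is harmless in the only place the proposition is applied.
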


\begin{proof}
Let $m$ be such that $x \in \a^{-m}(\Omega) \setminus \a^{-m +1}(\Omega)$. Hence $|x|_S \leq 2m + 1$ and:
\begin{equation*}
    || x || \geq \l^{m-1} \geq \l^{\frac{1}{2} (|x |_S - 3) }.
\end{equation*}
 Write $x = \a^{-i} x_1 \ldots x_j  \a^i$ as a word in the alphabet $S$ given by Lemma \ref{geodesics in Heinzte group}. Then as an element of $U$ we have $x = \a^{-i}(x_1 \ldots x_j )$ and so:
 \begin{equation*}
     || x || = \l^i || x_1 \ldots x_j || \lesssim \l^i \lesssim \l^{\frac{1}{2} |x |_S }.
 \end{equation*}
\end{proof}

\begin{lem}\label{modular_function_of_homothety}
Let $Q$ denote the Hausdorff dimension of the metric space $(U,d)$. \\
$1.$ The Haar measure on $U$ is equivalent to the $Q$-Hausdorff measure of $(U,d)$. \\
$2.$ Let $\D_H$ denote the modular function of $H$. Then we have: $\D_H(\a) = \l^Q$.
\end{lem}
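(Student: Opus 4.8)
The plan is to prove both statements at once, by first showing that the Haar measure $\mu$ on $U$ is Ahlfors regular with an exponent dictated by the homothety $\a$, and then identifying that exponent with the Hausdorff dimension $Q$. Since $\a$ is a group automorphism it fixes $e_U$, and since it is a $\l^{-1}$-homothety it carries balls to balls: $\a(B(e_U,r)) = B(e_U,\l^{-1}r)$ for every $r>0$. Because $\a$ is an automorphism, $\mu\circ\a$ is again a left Haar measure, so there is a constant $\mathrm{mod}(\a)>0$ with $\mu(\a(E)) = \mathrm{mod}(\a)\,\mu(E)$. Comparing $\a(\Omega) = B(e_U,\l^{-1}) \subsetneq \Omega$, where the intermediate annulus has nonempty interior and hence positive measure, shows $\mathrm{mod}(\a) < 1$, so I may define $Q' > 0$ by $\mathrm{mod}(\a) = \l^{-Q'}$.

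First I would establish Ahlfors $Q'$-regularity of $\mu$. From $\a(B(e_U,r)) = B(e_U,\l^{-1}r)$ one gets $\mu(B(e_U,\l^{-1}r)) = \l^{-Q'}\mu(B(e_U,r))$, and iterating yields $\mu(B(e_U,\l^{-k})) = \l^{-kQ'}\mu(\Omega)$ at every scale. Sandwiching an arbitrary radius $s$ between consecutive powers $\l^{-k-1} < s \le \l^{-k}$ and using monotonicity of $\mu$ gives $\mu(B(e_U,s)) \asymp s^{Q'}$; left-invariance of $\mu$ then transports this to balls centred anywhere, so $\mu$ is Ahlfors $Q'$-regular. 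By the mass distribution principle, an Ahlfors $Q'$-regular measure is comparable to the $Q'$-dimensional Hausdorff measure $\mathcal{H}^{Q'}$ and the Hausdorff dimension of the space equals $Q'$; hence $Q' = Q$ and $\mathcal{H}^Q \asymp \mu$. Finally, $\mathcal{H}^Q$ is itself left-invariant, since the left translations of $(U,d)$ are isometries and isometries preserve Hausdorff measure in every dimension; being a nonzero, locally finite, left-invariant Radon measure it must be a scalar multiple of $\mu$ by uniqueness of Haar measure. This proves statement $1$ and records the key identity $\mathrm{mod}(\a) = \l^{-Q}$.

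For statement $2$, I would compute the modular function of $H = \langle \a \rangle \ltimes U$ directly from a left Haar measure. Since $\langle\a\rangle \cong \ze$ is discrete and $U$ is unimodular, one checks that $\mathrm{d}\mu_H(\a^k,u) = \l^{kQ}\,\mathrm{d}k\,\mathrm{d}\mu(u)$, the product of counting measure $\mathrm{d}k$ on $\ze$ with $\mu$ weighted by $\l^{kQ}$, is left-invariant: the only nontrivial point is that conjugation by $\a^m$ scales $\mu$ by $\mathrm{mod}(\a^m)=\l^{-mQ}$, which the weight exactly cancels. Using the paper's convention $\mu_H(E\a) = \D_H(\a)\mu_H(E)$, equivalently $\int f(x\a^{-1})\,\mathrm{d}\mu_H(x) = \D_H(\a)\int f\,\mathrm{d}\mu_H$, I note that $(\a^k,u)\a^{-1} = (\a^{k-1},u)$, so the substitution $k\mapsto k-1$ multiplies the weight by $\l^Q$; hence $\D_H(\a) = \l^Q$. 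Equivalently $\D_H(\a) = \mathrm{mod}(\a)^{-1}$, which together with $\mathrm{mod}(\a)=\l^{-Q}$ from statement $1$ gives the result.

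The step I expect to be the main obstacle is the nondegeneracy at the critical exponent, that is, ensuring $0 < \mathcal{H}^Q(\Omega) < \infty$ so that $\mathcal{H}^Q$ is genuinely a Haar measure. This is exactly what the homothety buys us: without the self-similarity $\a(B(e_U,r))=B(e_U,\l^{-1}r)$, the Hausdorff measure at the Hausdorff dimension could be $0$ or $\infty$, whereas here the homogeneity forces Ahlfors regularity and thereby matches the volume-scaling exponent $Q'$ defined by $\mathrm{mod}(\a)$ with the Hausdorff dimension $Q$. Everything else is uniqueness of Haar measure and a routine change of variables in the semidirect product.
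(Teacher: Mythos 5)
Your proof is correct and follows essentially the same route as the paper: both arguments extract the scaling factor of Haar measure under $\a$ from uniqueness of Haar measure, deduce Ahlfors regularity of $\mu$ by iterating $\a$ on balls and sandwiching radii, identify the regularity exponent with the Hausdorff dimension $Q$, and obtain $\D_H(\a)=\l^Q$ as the inverse module of $\a$. The only differences are cosmetic: you verify the semidirect-product Haar measure $\l^{kQ}\,\mathrm{d}k\,\mathrm{d}\mu(u)$ by hand where the paper cites the standard formula $\mu(\a^{-1}(E))=\D_H(\a)\mu(E)$ for unimodular factors, and you add the (correct but unneeded) strengthening that $\mathcal{H}^Q$ is exactly proportional to $\mu$.
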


\begin{proof}
$1.$ Fix some left-invariant Haar measure $\mu$ on $U$. Then the measures $\a_*^k \mu$ on $U$ are also left-invariant for $k \in \ze$. Such measures are unique up to positive scalar, so there exists some $s \in \re_+^*$ such that $\a_*^k \mu = s^k \mu$ for all $ k\in \ze$.
We now show that $\mu$ is an Ahlfors $Q$-regular measure. For this, let $r > 0$ and $k \in \ze$ so that $\l^k \leq r < \l^{k+1}$. We have:
\begin{equation*}
    \mu(B(\l^k)) \leq \mu(B(r)) < \mu(B(\l^{k+1})).
\end{equation*}
Since $\a$ is a homothety, we have $B(\l^k) = \a^{-k}\Omega$ and thus 
\begin{equation*}
    \mu(B(\l^k)) = \a_*^k\mu(\Omega) = s^k \mu(\Omega).
\end{equation*}
We have $\mu(\Omega) < \infty$ since $\mu$ is a Radon measure (and $\Omega$ is compact, because $d$ is a proper metric) and $\mu(\Omega) > 0$ because $\Omega$ contains an open set as $d$ metrizes the topology of $U$. From this we deduce that there exists $C>0$ such that:
\begin{equation*}
    C^{-1} r^{\frac{\log s}{\log \l}} \leq \mu(B(r)) \leq C r^{\frac{\log s}{\log \l}}.
\end{equation*}
This implies that $\mu$ is Ahlfors $(\frac{\log s}{\log \l})$-regular and that the Hausdorff dimension $Q$ of the metric space $(U,d)$ must satisfy  $Q = \frac{\log s}{\log \l}$, that is $s = \l^Q$.

$2.$ Since the groups $U$ and $\langle \a \rangle \,\cong \, \ze$ are unimodular, the modular function of the semi-direct product $H = \langle \a \rangle \ltimes U $ is computed by the following formula. For any measurable set $E$ in $U$ we have:
\begin{equation*}
    \mu(\a^{-1}(E)) = \Delta_H(\a) \mu(E).
\end{equation*}
For $E = \Omega$ we have $0 < \mu(\Omega) < \infty$ and: 
\begin{equation*}
\Delta_H(\a) \mu(\Omega) = \mu(\a^{-1}(\Omega)) = \a_*\mu(\Omega) = \l^Q \mu(\Omega),
\end{equation*}
since in the proof of 1. we showed that $s = \l^Q$. Hence $\Delta_H(\a) = \l^Q$.
\end{proof}

\subsection{Growth estimates for Heintze groups}

We can now obtain an upper bound on the operator norms of the action $\pi_0$ of $R^\g$ on the space $H_\mathrm{ct}^1(H_\g, L^p(H_\g))$. In fact we will only do it for the restricted action of the torus $A^\g$, but this will be enough for our purposes. We come back to the usual Lie theoretic setting and use the usual notation. Our first task is to show that we can apply the results from the previous subsection. Fix $s \in S_\g$ such that $\l : = |\g(s^{-1})| > 1$ for the rest of the section.

\begin{prop}
    There exists a proper, left-invariant metric $d_\g$ on $U_\g$ so that conjugation by $s$ is a homothety of ratio $\l^{-1}$ on the metric space $(U_\g, d_\g)$.
\end{prop}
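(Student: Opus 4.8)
The plan is to build the metric $d_\g$ on the unipotent radical $U_\g$ by pushing a suitable metric down from its Lie algebra $\au^\g = \bigoplus_{\a \in \Phi_{n_\g > 0}} \ag_{(\a)}$, arranging that conjugation by $s$ dilates every root space by a controlled factor. Recall from the Lie-theoretic preliminaries that conjugation by $s \in S_\g$ acts on the root group $U_\a$ (identified via $\theta_\a$ with $\ag_\a$) as the homothety of ratio $\g(s)^{n_\g(\a)}$, and on $U_{2\a}$ as the homothety of ratio $\g(s)^{2 n_\g(\a)}$. Since $\l = |\g(s^{-1})| > 1$, conjugation by $s$ \emph{contracts} each root space by the factor $|\g(s)|^{n_\g(\a)} = \l^{-n_\g(\a)}$, with exponent $n_\g(\a) \geq 1$ for every $\a \in \Phi_{n_\g>0}$. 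The first step is therefore to record that $\Ad(s)$ is a contracting automorphism of $\au^\g$ whose eigenvalue-moduli are all of the form $\l^{-k}$ with $k \geq 1$ a positive integer.

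\emph{Building the metric.} The natural construction is a graded (Carnot–Carathéodory-type or simply a suitably homogeneous) left-invariant metric adapted to the filtration of $U_\g$ by the powers $n_\g(\a)$. First I would fix a norm on each graded piece and declare the dilation $\delta_t$ acting by $t^{n_\g(\a)}$ on the layer indexed by $\a$; because $[\ag_{(\a)}, \ag_{(\b)}] \subseteq \ag_{(\a+\b)}$ and $n_\g(\a+\b) = n_\g(\a) + n_\g(\b)$, these $\delta_t$ are genuine Lie-algebra automorphisms, hence (via $\exp$ in the real case, or the variety isomorphism $U_\g \cong \prod_\a U_\a$ in general) one-parameter group automorphisms of $U_\g$. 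The second step is to produce a left-invariant proper metric $d_\g$ for which each $\delta_t$ is a homothety of ratio $t$; this is exactly the construction of a homogeneous (Carnot–Carathéodory) metric on a graded nilpotent group, and properness follows since $d_\g$ metrizes the (locally compact, $\sigma$-compact) topology of $U_\g$. The third step is to identify $\delta_\l$, up to a bounded correction, with conjugation by $s$: the automorphism $\Ad(s)$ contracts the $\a$-layer by $\l^{-n_\g(\a)}$, which is precisely the action of $\delta_{\l^{-1}}$ on that layer, so $\mathrm{conj}_s = \delta_{\l^{-1}}$ acts as a homothety of ratio $\l^{-1}$.

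\emph{Main obstacle.} The genuinely delicate point is the non-reduced case, where $\a$ and $2\a$ both lie in $\Phi$ and $U_{(\a)}$ is only metabelian. There the grading must be taken over the integers $n_\g$ rather than over the roots themselves, and one must check that a single homogeneous metric can simultaneously see the layer $U_\a$ (dilated by $\l^{-n_\g(\a)}$) and its center $U_{2\a}$ (dilated by $\l^{-2n_\g(\a)}$) as a \emph{single} homothety of ratio $\l^{-1}$. This is compatible precisely because the dilation exponent on $U_{2\a}$ is twice that on $U_\a$, matching the doubling of $n_\g$, so the grading by $n_\g(\cdot)$ is preserved and the two factors rescale coherently under $\delta_t$. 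I expect the bulk of the work to be the standard but slightly technical verification that a homogeneous quasi-norm on a graded nilpotent group can be upgraded to an honest left-invariant geodesic (or at least proper, homothety-compatible) metric; this is where one invokes the existence theory for Carnot–Carathéodory metrics, and it is exactly the place the paper's later Hausdorff-dimension computation (via Lemma \ref{modular_function_of_homothety}) will draw upon. The remaining verifications — properness, left-invariance, and the homothety identity $d_\g(s x s^{-1}, s y s^{-1}) = \l^{-1} d_\g(x,y)$ — are then immediate from the construction.
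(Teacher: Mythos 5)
Your Archimedean construction is essentially the paper's (grade $\mathrm{Lie}(U_\g)$ by the integer $n_\g(\a)$, take a Carnot--Carath\'eodory metric, identify conjugation by $s$ with the dilation $\delta_{\l^{-1}}$), but it has two genuine gaps. The first is the non-Archimedean case, which the proposition covers: you say the construction works ``in general'' via the variety isomorphism $U_\g \cong \prod_\a U_\a$, but the Carnot--Carath\'eodory existence theory you invoke is a sub-Riemannian phenomenon over $\re$ and simply does not exist over a non-Archimedean field, where $U_\g$ is totally disconnected and there are no horizontal paths along which to measure length. The paper treats this case by a separate, elementary construction: take the compact-open \emph{subgroup} $\Omega = \prod_{\a \in \Phi_{n_\g>0}} \theta_\a(\O)$, where $\O$ is the ring of integers, set $v(x) = \sup\{k \in \ze : x \in s^k.\Omega\}$ and $||x||_\g = \l^{-v(x)}$; because $\Omega$ is a subgroup one gets $v(xy) \geq \min\{v(x),v(y)\}$, so $d_\g(x,y) = ||x^{-1}y||_\g$ is an honest ultrametric, left-invariant and proper, on which conjugation by $s$ scales by exactly $\l^{-1}$. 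The paper even remarks that over $\re$ this same recipe fails ($\Omega$ is no longer a subgroup, so one only gets a quasi-metric), which is precisely why the two cases require different arguments and cannot be merged as your proposal attempts.

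The second gap is in the real case itself: for the Carnot--Carath\'eodory distance to be finite (i.e., a metric at all), the first layer $\ag_1 = \bigoplus_{n_\g(\a)=1}\ag_\a$ must \emph{generate} the Lie algebra of $U_\g$. This is a nontrivial Lie-theoretic fact, not an automatic consequence of the grading, and the paper's proof explicitly verifies it by citing \cite{yamaguchi} (Lemma 3.8 there); your proposal never checks it, and your parenthetical fallback of ``a suitably homogeneous metric'' on a merely graded nilpotent group would require a different existence theorem (\`a la Hebisch--Sikora) that you do not supply. Two smaller remarks: conjugation by $s$ acts on $\ag_\a$ by the scalar $\g(s)^{n_\g(\a)}$, not $\l^{-n_\g(\a)}$, so it equals $\delta_{\l^{-1}}$ only up to the unit part of $\g(s)$ (a sign over $\re$, a unit complex scalar over $\co$); one should choose the norm on $\ag_1$ invariant under that unit so the unit part acts isometrically. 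And the metabelian $BC$-type case you single out as the main obstacle is in fact a non-issue once the grading is taken by $n_\g$, exactly as you conclude --- the real delicate points are the two above.
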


\begin{proof}
In the real case, $U_\g$ is a Carnot group for the gradation on its Lie algebra $\mathrm{Lie}(U_\g) = \bigoplus_{\a \in \Phi^+, n_\g(\a) > 0} \ag_\a$ given by
\begin{equation*}
    \ag_k := \bigoplus_{\a \in \Phi^+, n_\g(\a)= k} \ag_\a
\end{equation*}
for $k>0$. This means that $\mathrm{Lie}(U_\g)$ is generated by the subspace $\ag_1$. This is shown for instance in \cite{yamaguchi}, see there Lemma 3.8 and the discussion at the end of Section 3.4. Since $U_\g$ is a Carnot group, it carries a \textit{Carnot-Carathéodory metric} \cite[p. 3]{pansu89-carnot}, which satisfies the conditions of the Proposition \cite[p. 4]{pansu89-carnot}. 

Suppose now that the local field $F$ is non-Archimedean, consider its ring of integers $\mathcal{O}$ and let $\Omega$ be the compact-open subgroup $\prod_{\a \in \Phi_{n_\g > 0}} \theta_\a(\mathcal{O})$. 
Notice that $\bigcap_{k \geq 0} s^k.\Omega = \{ e_{U_\g} \}$ and $\bigcup_{k \leq 0} s^k.\Omega = U_\g$.
These two conditions allow us to define a valuation $v(x) = \sup \{ k \in \ze, x \in s^k.\Omega \}$ for $x \in U_\g$. Since $\Omega$ is a subgroup, it satisfies the inequality $v(xy) \geq \min \{ v(x), v(y) \}$ for all $x, y \in U_\g$. We may define the norm 
\begin{equation*}
    ||x||_\g := \l ^{- v(x)},
\end{equation*}
on $U_\g$ and the distance $d_\g (x, y) := ||x^{-1} y ||_\g$ for all $x,y \in U_\g$. The distance $d_\g$ on $U_\g$ satisfies the ultrametric inequality, is left-invariant and satisfies:
\begin{equation*}
    d_\g(s.x, s.y) = \l^{-1} d_\g (x, y)
\end{equation*}
for all $x, y  \in U_\g$.
\end{proof}

\begin{rem}
1. The construction in the non-Archimedean case also works for a group $U$ with compact neutral component $U^0$ and with a compacting automorphism $\a$, meaning that there exists some compact subset $\Omega$ such that for any compact subset $K \subseteq U$, there exists $k_0$ such that for all $k \geq k_0$, we have $\a^k(K) \subseteq \Omega$. By \cite[4.5.ii]{cornulier-tessera}, we may choose $\Omega$ to be a compact-open subgroup such that $\a^k(\Omega) \subseteq \Omega$ for all $k > 0$. If $ L = \bigcap_{k \geq 0}\a ^k(\Omega)$, then our proof defines a $U$-invariant distance on $U/L$ that satisfies the ultrametric inequality and for which the automorphism induced by $\a$ on $U/L$ is a homothety.

2. We can also consider this construction when $U_\g$ is a real Lie group by replacing $\Omega$ with a product of compact intervals containing $0$. A minor problem arises from the fact that in this case $\Omega$ is not a subgroup, so the corresponding valuation will only satisfy $v(xy) \geq \min \{ v(x), v(y) \} - C$ for some constant $C> 0$. Thus the formula $||x||_\g = e^{- v(x)}$ defines only a left-invariant quasi-metric $d_\g$. This can be salvaged by considering a power $d_{\g}^{a}$ of $d_\g$ for $0<a<1$ sufficiently small. The main problem is that we do not have a good control of the Hausdorff dimension of this metric (it can be really big), as opposed to the Carnot-Carathéodory metric, which has minimal dimension in the conformal gauge of $\partial H_\g$.
\end{rem}

Suppose again that $F$ is a (real or non-Archimedean) local field. We will consider a function that is Lipschitz-equivalent to the metric $d_\g$ and easier for computations. To construct it, we first identify $S_\g$-equivariantly each factor $U_\a$ to $\ag_\a$ endowed with some vector space norm $| \cdot |_\a$ (in the non-Archimedean case, we choose these norms to be compatible with the absolute value $| \cdot |$ on $F$ such that $\Hdim(F, | \cdot |) = 1$ in the sense that $|x v|_\a = |x| |v|_\a$ for all $x \in F$ and $v \in \ag_\a$). We decompose an element $x \in U_\g$ in coordinates $x = (x_\a)_{\a \in \Phi_{n_\g > 0}}$, where $x_\a \in U_\a$, and define:
\begin{equation*}
    N_\g(x) :=  \max\{| x_\a|^{1 / n_\g(\a)}, \a \in \Phi_{n_\g > 0} \}.
\end{equation*}
In this way $N_\g(s.x) =  \max\{|\a(s) x_\a|^{1 / n_\g(\a)}, \a \in \Phi_{n_\g > 0} \} = \l^{-1} N_\g(x)$ for all $x \in U_\g$. Indeed, as $s \in S_\g = \bigcap_{\s \in \S \setminus \{ \g \} } \ker \s$, we have $|\a(s)| = |\g(s)|^{n_\g(\a)}$ and $|\a(s)|^{1 / n_\g(\a)} = |\g(s)|$ for all $\a \in \Phi_{n_\g > 0}$. As the function $N_\g$ is positive on $U_\g \setminus \{ 1 \}$ and $s$ acts on it as a homothety of ratio $\l^{-1}$, the left-invariant function $N_\g(x, y) = N_\g(x^{-1}y)$ is Lipschitz-equivalent to the metric $d_\g$. 

Using the function $N_\g$ we may compute the Hausdorff dimension $Q_\g$ of the metric space $(U_\g, d_\g)$. Indeed, we have:
\begin{equation*}
    Q_\g = \sum_{\a \in \Phi_{n_\g > 0}} \Hdim(\ag_\a , | \cdot |_\a^{1/n_\g(\a)}) = \sum_{\a \in \Phi_{n_\g > 0}} n_\g(\a) m(\a).
\end{equation*}

For $g \in A^\g$ define:
 \begin{equation*}
     ||g||_\g := \max_{\a \in \Phi_{n_\g > 0}}(|\a(g)|^{1/n_\g(\a)} ).
 \end{equation*}

\begin{lem}\label{torus_acting_on_U}
There exists $C> 0$ such that for all $g \in A^\g$ and all $x, y \in U_\g$ we have:
    \begin{equation*}
    d_\g (g.x, g.y) \leq C ||g||_\g d_\g(x,y)
\end{equation*}
\end{lem}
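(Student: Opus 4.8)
The plan is to transfer the inequality to the comparison function $N_\g$ and then exploit that conjugation by $g$ is a group automorphism acting diagonally in root coordinates. Since the left-invariant function $N_\g(x,y) = N_\g(x^{-1}y)$ is Lipschitz-equivalent to $d_\g$, there is a constant $C_0 \geq 1$ with $C_0^{-1} d_\g(x,y) \leq N_\g(x^{-1}y) \leq C_0\, d_\g(x,y)$ for all $x,y \in U_\g$. Hence it suffices to establish $N_\g\big((g.x)^{-1}(g.y)\big) \leq ||g||_\g\, N_\g(x^{-1}y)$, and the lemma then follows with $C = C_0^2$.

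Next I would use that conjugation by $g \in A^\g$ is an automorphism of $U_\g$, so that $(g.x)^{-1}(g.y) = g.(x^{-1}y)$. Writing $z = x^{-1}y$, the problem reduces to proving the left-invariant inequality $N_\g(g.z) \leq ||g||_\g\, N_\g(z)$ for all $z \in U_\g$. The key computation proceeds in coordinates: decompose $z = \prod_{\a \in \Phi_{n_\g > 0}} \theta_\a(z_\a)$ according to the variety isomorphism $U_\g \cong \prod_\a U_\a$. Since $g$ normalizes each root group $U_\a$ and conjugation is a homomorphism, $g.z = \prod_\a \big(g\,\theta_\a(z_\a)\,g^{-1}\big) = \prod_\a \theta_\a(\a(g) z_\a)$, so by uniqueness of the decomposition the coordinates of $g.z$ are $(\a(g) z_\a)_\a$. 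Therefore
\begin{align*}
N_\g(g.z) &= \max_{\a \in \Phi_{n_\g > 0}} |\a(g)\, z_\a|^{1/n_\g(\a)} = \max_{\a} |\a(g)|^{1/n_\g(\a)}\,|z_\a|^{1/n_\g(\a)} \\
&\leq ||g||_\g \, \max_{\a} |z_\a|^{1/n_\g(\a)} = ||g||_\g\, N_\g(z),
\end{align*}
where the inequality bounds each factor $|\a(g)|^{1/n_\g(\a)}$ by $||g||_\g = \max_\b |\b(g)|^{1/n_\g(\b)}$. Notice this yields the clean constant $1$ for $N_\g$, with all the loss in the final constant coming from the Lipschitz comparison with $d_\g$.

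The only point requiring care — and the closest thing to an obstacle — is the claim that conjugation by $g$ acts diagonally on the coordinates $(z_\a)_\a$, equivalently that the variety decomposition $U_\g \cong \prod_\a U_\a$ is $S$-equivariant. This rests entirely on the facts recalled earlier: $S$ normalizes each $U_\a$, and the homothety formula $g\,\theta_\a(X)\,g^{-1} = \theta_\a(\a(g) X)$ holds factor by factor. Consequently no cross-terms appear when conjugating a product of root-group elements, even though $U_\g$ is non-abelian; the nilpotent group law affects how $z$ is written as a product but not the coordinatewise behaviour of the torus action. Everything else is elementary, and the argument is uniform in the real and non-Archimedean cases.
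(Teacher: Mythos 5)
Your proposal is correct and follows essentially the same route as the paper: reduce to the comparison function $N_\g$ via the Lipschitz equivalence $(1/c)\, d_\g \leq N_\g \leq c\, d_\g$, establish $N_\g(g.z) \leq ||g||_\g\, N_\g(z)$ from the coordinatewise homothety action of the torus, and conclude with constant $c^2$. The paper compresses the key inequality into ``directly from the definitions of $N_\g$ and $||g||_\g$''; you usefully spell out why that step is legitimate, namely the $S$-equivariance of the variety decomposition $U_\g \cong \prod_{\a \in \Phi_{n_\g > 0}} U_\a$ and the identity $(g.x)^{-1}(g.y) = g.(x^{-1}y)$, both of which indeed rest on the facts recalled earlier in the paper.
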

 
\begin{proof}
Directly from the definitions of $N_\g$ and $||g||_\g$ it follows that for all $g \in A^{\g}$ and $x \in U_\g$ we have $N_\g(g . x) \leq ||g||_\g N_\g(x)$. Since $N_\g$ and $d_\g$ are Lipschitz-equivalent, there exists $c > 0$ such that $(1/c) d_\g \leq N_\g \leq c d_\g$ and hence
\begin{equation*}
    d_\g (g.x, g.y) \leq c N_\g(g.x, g.y) \leq c ||g||_\g N_\g(x, y) \leq c^2 ||g||_\g d_\g(x,y).
\end{equation*}
\end{proof}  

The main result of the section is the following:

\begin{thm}\label{CohomologyGrowthThm}
There exists a constant $C = C(p, \g) > 0$ such that for all $g \in A^\g$:
\begin{equation*}
    |||\pi_0(g)|||_{V_p}^p \leq C ||g^{-1}||_\g^{Q_\g}.
\end{equation*}
\end{thm}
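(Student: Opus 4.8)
The plan is to bound the operator norm of $\pi_0(g)$ on the Banach space $Z^1(H_\g, L^p(H_\g))$ equipped with $\|b\| = \sup_{h\in K_0}\|b(h)\|_p$ for a fixed compact generating set $K_0$ of $H_\g$, and then pass to the quotient: since $V_p$ is the quotient of $Z^1(H_\g, L^p(H_\g))$ by the closed subspace of coboundaries and $\pi_0(g)$ preserves coboundaries, one has $|||\pi_0(g)|||_{V_p}\leq |||\pi_0(g)|||_{Z^1}$. First I would make the action explicit. Writing $\phi_g(x)=g^{-1}xg$ for the conjugation automorphism of $H_\g$, the formula of Corollary \ref{Hk(G,Lp(G))=Hk-1(R,Lp(R,H1(AN,Lp(AN))))} reads $(\pi_0(g)b)(h)=b(\phi_g(h))\circ\phi_g$. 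Performing the substitution $y=\phi_g(x)$ inside the integral defining $\|(\pi_0(g)b)(h)\|_p$ produces the Jacobian of $\phi_g$ on $U_\g$, namely $\prod_{\a\in\Phi_{n_\g>0}}|\a(g)|^{m(\a)}$. The decisive point is that this Jacobian equals $1$: indeed $g\in A^\g\subseteq M^\g$, and the semisimple group $M^\g$ has no nontrivial character, so it acts on $U_\g$ preserving Haar measure (this is exactly the Remark following the Levi decomposition). Hence $\|(\pi_0(g)b)(h)\|_p=\|b(\phi_g(h))\|_p$ and therefore $\|\pi_0(g)b\|=\sup_{h\in K_0}\|b(\phi_g(h))\|_p$, so the whole problem reduces to estimating $\|b(h')\|_p$ for $h'$ in the compact set $\phi_g(K_0)$. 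Note that the same volume-preservation forces $\sum_\a m(\a)\log|\a(g)|=0$, whence $\|g^{-1}\|_\g\geq 1$ for every $g\in A^\g$.

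The second ingredient is the large-scale geometry of Section 3.1. By Lemma \ref{torus_acting_on_U} applied to $g^{-1}$, every $h'=\phi_g(h)$ with $h\in K_0$ satisfies $d_\g(h',e)\lesssim\|g^{-1}\|_\g$, so by Proposition \ref{bounds for distortion} its word length is at most $\tfrac{2}{\log\l}\log\|g^{-1}\|_\g+O(1)$, where $s$ is the fixed homothety with $\l=|\g(s^{-1})|$. By Lemma \ref{geodesics in Heinzte group} I can write $h'=s^{-i}x_1\cdots x_j s^{i}$ with $x_1,\dots,x_j\in\Omega$, with $j$ bounded and $i\lesssim\tfrac{1}{\log\l}\log\|g^{-1}\|_\g$. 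I would then expand $b(h')$ by the cocycle relation along this geodesic word and bound each term using $|||\rho_p(w)|||_{L^p(H_\g)}=\D_{H_\g}(w)^{-1/p}$ (the computation in the proof of Proposition \ref{generalGrowthEstimate}, with $\rho_p$ the right regular representation of $H_\g$). Since $\D_{H_\g}$ is trivial on $U_\g$ and equals $\l^{Q_\g}$ on $s$ by Lemma \ref{modular_function_of_homothety} together with $Q_\g=\sum_\a n_\g(\a)m(\a)$, the weights $\D_{H_\g}(\mathrm{prefix})^{-1/p}$ grow geometrically as one climbs to the apex $s^{-i}$, where they attain $\l^{iQ_\g/p}\lesssim\|g^{-1}\|_\g^{Q_\g/p}$, and then decrease symmetrically. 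The two geometric tails converge, so the sum is dominated by the single apex term, giving $\|b(h')\|_p\lesssim\|b\|\,\|g^{-1}\|_\g^{Q_\g/p}$. Combined with the first paragraph this yields $|||\pi_0(g)|||_{Z^1}^p\lesssim\|g^{-1}\|_\g^{Q_\g}$, which is the claim.

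I expect two points to require the most care. The first is the vanishing of the change-of-variables Jacobian: without it the bound would carry an extra factor $\prod_\a|\a(g)|^{m(\a)}$, which is not controlled by $\|g^{-1}\|_\g^{Q_\g}$, and the estimate would genuinely break down; so the semisimplicity of $M^\g$ is used in an essential way. The second, and the technical heart, is the bookkeeping of the modular function along the geodesic word: one must verify that the weights $\D_{H_\g}(\mathrm{prefix})^{-1/p}$ really peak at the apex of height $i\sim\log_\l\|g^{-1}\|_\g$ and that the ascending and descending geometric series converge, so that the total is comparable to the apex term $\|g^{-1}\|_\g^{Q_\g/p}$ rather than to $i\cdot\|g^{-1}\|_\g^{Q_\g/p}$. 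One should also check that the distortion estimates of Section 3.1, formulated for $\langle s\rangle\ltimes U_\g$, transfer to the word metric on $H_\g=S_\g\ltimes U_\g$ that defines $\|\cdot\|$ on $Z^1$; this is routine since $\langle s\rangle\ltimes U_\g$ is cocompact in $H_\g$.
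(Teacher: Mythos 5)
Your proposal is correct and takes essentially the same route as the paper's proof: reduce to the Banach space $Z^1(H_\g, L^p(H_\g))$ with sup-norm over a compact generating set and pass to the quotient $V_p$, use volume-preservation of conjugation by $A^\g$ (semisimplicity of $M^\g$) to reduce to estimating $\|b(g^{-1}xg)\|_p$, expand along the geodesic word of Lemma \ref{geodesics in Heinzte group} with modular-function weights computed via Lemma \ref{modular_function_of_homothety}, dominate the resulting sum by the apex term $\l^{nQ_\g/p}$, and finally bound $\l^n$ by $\|g^{-1}\|_\g$ via Proposition \ref{bounds for distortion} and Lemma \ref{torus_acting_on_U}. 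The only cosmetic deviation is that $d_\g$ is a metric on $U_\g$ rather than on $H_\g$, so the paper writes $x = ys$ with $y \in \Omega$ and applies the distortion estimate to the $U_\g$-component $g^{-1}yg$, appending the final letter $s$ separately in the word $s^{-n}x_1\cdots x_j s^{n+1}$ --- exactly the routine repair your last paragraph anticipates.
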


\begin{rem}
This inequality is optimal (up to the constant $C>1$) in the case of $G = \SL(r+1, \re)$ with simple root $\g = e_1 - e_2$. Indeed, we have $M_{\S \setminus \{\g\}} =  \SL(r, \re)$ acting on $U_\g = \re^r$ by the natural linear action and $Q_\g = r$. The space $V_p$ may be identified (as a continuous $M_{\S \setminus \{\g\}}$-module) with the Besov space $B_{p,p}^{r/p}(\re^r)$, that is, the Banach space of measurable functions $u : \re^r \to \re$ such that 
\begin{equation*}
    ||u||_{B_p}^p = \int_{\re^r \times \re^r} \frac{|u(x) - u(x')|^p}{d(x, x')^{2r}} \mathrm{d}x \mathrm{d}x' < \infty,
\end{equation*}
modulo constant functions \cite[5.2]{pansu89}.
For $g = \mathrm{diag}(2^{1-r}, 2, \ldots, 2) \in M_{\S \setminus \{\g\}}$, we construct the function $u \in B_{p,p}^{r/p}(\re^r)$ such that $u(x) = 1$ on the cube $Q_1 = [1,2]^r$ and $u(x) = 0$ on the cube $Q_2 = [-2, -1] \times [1,2]^{r-1}$ disjoint from $Q_1$ and restricting the previous integral to $Q_1 \times Q_2$ we may see that $||\pi_0(g^k) u ||^p_{B_p} \gtrsim ||g^{-k}||^r ||u||^p_{B_p}$.
\end{rem}

After combining the inequality in Theorem \ref{CohomologyGrowthThm} with Proposition \ref{generalGrowthEstimate}, we obtain the following corollary.

\begin{cor}\label{CohomologyGrowthCor}
There exists a constant $C = C(p, \g) > 0$ such that for all $g \in A^\g$:
\begin{equation*}
    |||\pi_p(g)|||_{V_p}^p \leq C \Delta_{R^\g}(g)^{-1} ||g^{-1}||_\g^{Q_\g}.
\end{equation*}
In particular: \\
any $g \in A^\g$ such that $\Delta_{R^\g}(g)^{-1} ||g^{-1}||_\g^{Q_\g} < 1$ is $\pi_p$-contracting for every $p>1$, \\
any $g \in A^\g$ such that $\Delta_{R^\g}(g)^{-1} ||g^{-1}||_\g^{Q_\g} \leq 1$ is $\pi_p$-bounded for every $p>1$.
\end{cor}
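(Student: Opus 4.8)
The plan is to obtain the displayed inequality as a direct combination of the two preceding results, applied to the group $R^\g$ with coefficient module $V_p$. Proposition \ref{generalGrowthEstimate} computes the operator norm of the induced action $\pi$ on $L^p(R^\g, V_p)$ exactly, in terms of the operator norm of $\pi_0$ on $V_p$ and the modular function, while Theorem \ref{CohomologyGrowthThm} bounds the latter by the combinatorial quantity $||g^{-1}||_\g^{Q_\g}$. Stitching the two together immediately yields the estimate, so the only content of the first statement is this bookkeeping.

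Concretely, I would first apply Proposition \ref{generalGrowthEstimate} with $G = R^\g$ and $V = V_p$, raising its identity to the $p$-th power, to get
\begin{equation*}
    |||\pi(g)|||_{L^p(R^\g, V_p)}^p = \Delta_{R^\g}(g)^{-1} \, |||\pi_0(g)|||_{V_p}^p
\end{equation*}
for every $g \in A^\g \subseteq R^\g$. Substituting the bound $|||\pi_0(g)|||_{V_p}^p \leq C ||g^{-1}||_\g^{Q_\g}$ from Theorem \ref{CohomologyGrowthThm} then gives exactly $|||\pi(g)|||_{L^p(R^\g, V_p)}^p \leq C \Delta_{R^\g}(g)^{-1} ||g^{-1}||_\g^{Q_\g}$ with the same constant $C = C(p,\g)$.

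For the two consequences I would iterate this bound along the powers of $g$. The point is that both factors are multiplicative in the exponent: since the modular function is a continuous homomorphism into $\re_+^*$ we have $\Delta_{R^\g}(g^n) = \Delta_{R^\g}(g)^n$, and since each $\a$ is a character and $t \mapsto t^n$ is increasing on $[0,\infty)$ we get $||g^{-n}||_\g = \max_{\a}|\a(g)|^{-n/n_\g(\a)} = ||g^{-1}||_\g^n$. Applying the main inequality to $g^n$ therefore gives
\begin{equation*}
    |||\pi(g^n)|||_{L^p(R^\g, V_p)}^p \leq C \left( \Delta_{R^\g}(g)^{-1} ||g^{-1}||_\g^{Q_\g} \right)^n.
\end{equation*}
If the bracketed quantity is $<1$, the right-hand side tends to $0$, so $|||\pi(g^n)||| \to 0$ and $g$ is $\pi_p$-contracting; if it is $\leq 1$, the right-hand side stays bounded by $C$, whence $\sup_{n>0}|||\pi(g^n)||| \leq C^{1/p} < \infty$, giving $\pi_p$-boundedness.

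The one genuinely delicate point, and the step I would treat most carefully, is the non-normative clause in the definition of $\pi_p$-bounded requiring that $g$ generate a non-relatively compact semigroup; this forces me to exclude the trivial element $e$, for which the bracket equals $1$ but the semigroup is compact. For any nontrivial $g \in A^\g$ the condition is automatic: some character has $|\a(g)| \neq 1$, so the orbit $(g^n)_{n>0}$ leaves every compact subset of the split torus $A^\g$, hence of $R^\g$, and the semigroup is non-relatively compact. I would state this explicitly so that the boundedness conclusion is not vacuous.
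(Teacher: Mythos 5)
Your proof is correct in its main line and is exactly the paper's (implicit) argument: the paper offers no separate proof, stating only that the corollary follows by combining Proposition \ref{generalGrowthEstimate} with Theorem \ref{CohomologyGrowthThm}, and your bookkeeping plus the iteration along powers of $g$ (using multiplicativity of $\Delta_{R^\g}$ and of $||\cdot||_\g$ on torus elements, rather than submultiplicativity of the operator norm, which would spoil the boundedness case when $C>1$) is the right way to extract the two consequences.

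One caveat on your final paragraph: the claim that every nontrivial $g \in A^\g$ automatically generates a non-relatively compact semigroup is false over a non-Archimedean field. There the split torus $A^\g \cong (F^*)^k$ contains a maximal compact subgroup (coordinates of absolute value $1$), and any nontrivial element $g$ of it satisfies $|\a(g)| = 1$ for every root $\a$, so the sequence $(g^n)_{n>0}$ stays in a compact set; such a $g$ has bracket exactly $1$ but is not $\pi_p$-bounded in the sense of the definition. Your argument via characters is only valid when some $|\a(g)| \neq 1$ (which does force $(g^n)$ to leave every compact set, since $\a$ is a continuous homomorphism), and in the real case with $A^\g$ taken connected, where triviality of all $|\a(g)|$ forces $g = e$. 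The harmless fix is to state the boundedness clause for those $g \in A^\g$ with $|\a(g)| \neq 1$ for at least one root $\a$; this costs nothing downstream, since the elements actually fed into Theorem \ref{VanishingCriterion} via Proposition \ref{Combinatorial-Inequality} are coweights with $|\s(g)| > 1$ for a chosen simple root $\s$.
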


In order to prove Theorem \ref{CohomologyGrowthThm} we apply results from the previous section to the group $U = U_\g$ with its left-invariant metric $d = d_\g$ and the automorphism $\a$ is conjugation by some non-trivial $s \in S_\g$ such that $ \l^{-1} := |\g(s) | < 1$. The main idea of the proof is to use the cocycle relation at each step of a word as in Lemma \ref{geodesics in Heinzte group} and compute the contribution of each step using Lemma \ref{modular_function_of_homothety}.

\begin{proof}[Proof of theorem \ref{CohomologyGrowthThm}]
Here we replace $H_\g$ with its cocompact subgroup $ \langle s \rangle \ltimes U_\g$. Let $\rho_{H_\g}$ be the right regular representation of $H_\g$ on $L^p(H_\g)$. As $H_\g$ is compactly generated, with compact generating set $S = \{s^{\pm 1}\} \times \Omega$, we may consider $Z^1(H_\g, L^p(H_\g))$ as a Banach space with norm $|| b ||_{S, p} = \sup_{x \in S} ||b(x) ||_p$. Let $b \in Z^1(H_\g, L^p(H_\g))$. Since the action by conjugation of $R^\g$ on $H_\g$ preserves the volume, we have:
\begin{equation*}
    ||\pi_0(g) b ||_{S,p} = \sup_{x \in S} ||b(g^{-1} x g) ||_p.
\end{equation*}

Fix some $x \in S$. We suppose that $x \in \{ s \} \times \Omega$, the situation being similar for $x \in \{ s^{-1}\} \times \Omega$. Write $x = y s$, with $y \in \Omega$. Lemma \ref{geodesics in Heinzte group} says that there exist $C>0$ and $n, j \in \na$ with $j \leq C$ and $x_1, \ldots, x_j \in \Omega$ such that 
\begin{equation*}
    g^{-1}yg = s^{-n} x_1 \ldots x_j  s^{n},
\end{equation*}
and $2n \leq |g^{-1}yg |_S + C $.

Let $s_i \in S$ be the $i$-th term (in the alphabet $S$), from left to right, of the word 
\begin{equation*}
    g^{-1}xg =  s^{-n} x_1 \ldots x_j  s^{n+1}.
\end{equation*} 
More precisely, 
\begin{equation*}
s_i = 
\begin{cases}
  s^{-1} & \text{ for } 1 \leq i\leq n,\\ 
 x_{i-n} & \text{ for } n+1 \leq i \leq n+j, \\
 s & \text{ for } n+j+1 \leq i \leq 2n + j +1.
\end{cases}
\end{equation*}

\noindent Let $h_i = s_1 \ldots s_i$ for all $i>0$ and $h_0 = 1$. Iterating the cocycle relation for $b$ yields:
\begin{equation*}
    b(g^{-1}xg) = \sum_{i= 1}^{2n+j+1} \rho_{H_\g}(h_{i-1})b(s_i).
\end{equation*}
From here we obtain:
\begin{equation*}
      ||b(g^{-1}xg)||_p \leq \sum_{i= 1}^{2n+j+1} ||\rho_{H_\g}(h_{i-1})b(s_i)||_p = \sum_{i= 1}^{2n+j+1} \D_{H_\g}(h_{i-1})^{-1/p}||b(s_i)||_p.
\end{equation*}
Using Lemma \ref{modular_function_of_homothety}, we have that:
\begin{equation*}
\D_{H_\g}(s_{i})^{-1} = 
\begin{cases}
  \l^{Q_\g} & \text{ for } 1 \leq i\leq n,\\ 
 1 & \text{ for } n+1 \leq i \leq n+j, \\
 \l^{-Q_\g} & \text{ for } n+j+1 \leq i \leq 2n + j +1.
\end{cases}
\end{equation*}

Thus:
\begin{equation*}
\D_{H_\g}(h_{i})^{-1} = 
\begin{cases}
  \l^{i Q_\g} & \text{ for } 1 \leq i\leq n,\\ 
 1 & \text{ for } n+1 \leq i \leq n+j, \\
 \l^{(2n + j + 1 - i)Q_\g} & \text{ for } n+j+1 \leq i \leq 2n + j +1.
\end{cases}
\end{equation*}

Hence:
\begin{align*}
    ||b(g^{-1}xg)||_p & \leq \sum_{k=0}^n \l^{ k Q_\g/p} ||b||_{S,p} + C \l^{ n Q_\g/p} ||b||_{S,p} + \sum_{k=0}^n \l^{ k Q_\g/p} ||b||_{S,p}  \\
    & \leq C_1  \l^{ n Q_\g/p} ||b||_{S,p}.
\end{align*}
where $C_1 =2  \frac{1}{1- \l^{-Q_\g/p}} + C$. Using $2n \leq |g^{-1}yg |_S + C $ and Proposition \ref{bounds for distortion} we obtain
\begin{equation*}
    \l^n \leq \l^{\frac{1}{2} |g^{-1}yg |_S + C } \leq C_2 || g^{-1}y g  ||_\g.
\end{equation*}

Since $g \in A^{\g}$, Lemma \ref{torus_acting_on_U} says that $|| g^{-1}y g  ||_\g \leq C_3 ||g^{-1}||_\g  ||y||_\g$. We have that $y \in \Omega$, so $||y||_\g \leq 1$ and $\l^n \leq C_4 ||g^{-1}||_\g$.
From this we may conclude that:
\begin{equation*}
    ||\pi_0(g) b ||_{S,p} = \sup_{x \in S} ||b(g^{-1} x g) ||_p \leq C_5 || g^{-1}  ||_\g^{Q_\g /p} ||b||_{S,p}.
\end{equation*}
Hence $|||\pi_0(g)|||_{Z^1(H_\g, L^p(H_\g))} \leq C_5  || g^{-1}  ||_\g^{Q_\g /p}$ and the operator induced on the quotient Banach space $V_p = H_\mathrm{ct}^1(H_\g, L^p(H_\g))$ has smaller operator norm:
\begin{equation*}
    |||\pi_0(g)|||_{V_p} \leq C_5  || g^{-1}  ||_\g^{Q_\g /p}.
\end{equation*}

\end{proof}

\section{Existence of contracting elements for simple groups}

In this section we want to show existence of $\pi_p$-contracting elements as in our vanishing criterion (Theorem \ref{VanishingCriterion}) via our estimate (Corollary \ref{CohomologyGrowthCor}) for simple groups appearing in the statement of Theorem \ref{Vanishing-in-degree-2-Intro}. We first translate the condition of Theorem \ref{VanishingCriterion} in combinatorial terms as an inequality depending only on a root system and the multiplicities of the roots. Then, using the classification of simple algebraic groups over local fields in its more classical form (in the sense that we give a full list of the groups in the classification), we obtain Theorem \ref{Vanishing-in-degree-2-Intro} thanks to a case-by-case verification of the inequality. We sum up these results in tables. Taking into account multiplicities explains why we cannot stand only by the root system and why we go back to classical presentations.

We conclude this section by obtaining Corollary \ref{Uniform-Vanishing-for-Admissible-Groups} as a byproduct of this combinatorial study and by comparing our results with those from \cite{bourdon-remy-vanishings}.

\subsection{Combinatorial reformulation}

Corollary \ref{CohomologyGrowthCor} says that to guarantee the existence of $\pi_p$-contracting (resp. $\pi_p$-bounded) elements, it is enough to compute the term $\D_{R^\g}(g)^{-1} ||g^{-1}||_\g^{Q_\g}$ for $g \in A^\g$ and see that this term is $< 1$ (resp. $\leq 1$). In practice, $\pi_p$-bounded elements that are not $\pi_p$-contracting appear only in some exceptional low rank cases.

Our next goal is to write the expression $\D_{R^\g}(g)^{-1} ||g^{-1}||_\g^{Q_\g}$ in terms of root systems. We may begin with $\D_{R^\g}(g)$. For $g \in A^{\g}$, the modular function of $R^\g$ can be expressed as:
\begin{equation*}
    \D_{R^\g}(g)^{-1} = \prod_{\a \in (\Phi_{\S \setminus \{ \g \}})^+} |\a(g)|^{m(\a)}.
\end{equation*}
Choose some simple root $\s \in \S, \s \neq \g$. Let $g \in A^{\g} \cap \bigcap_{\t \in \S\setminus \{ \g, \s \}} \ker \t$ such that $|\s(g)| > 1 $. This choice is made so that the subgroup $P_g$ of $R^\g$ is big. Indeed, $g$ commutes with $U_\t$ for all $\t \in \S\setminus \{ \g, \s \}$, therefore $P_g \supset U_\t$ for all $\t \in \S\setminus \{ \g, \s \}$. The modular function of $g$ may be expressed as follows:
\begin{equation*}
    \D_{R^\g}(g)^{-1} = \prod_{\a \in \Phi_{n_\s > 0} \cap (\Phi_{\S \setminus \{ \g \}})^+} |\s(g)|^{n_\s(\a) m(\a)}.
\end{equation*}
On the other hand, since $g \in \bigcap_{\t \in \S\setminus \{ \g, \s \}} \ker \t$, we have that:
\begin{equation*}
    |\a(g)| = |\g(g)|^{n_\g(\a)} |\s(g)|^{n_\s(\a)}
\end{equation*}
for all $\a \in \Phi^+$, so that:
\begin{equation*}
    ||g||_\g = |\g(g)| \max_{\a \in \Phi_{n_\g > 0}}\{|\s(g)|^{n_\s(\a)/n_\g(\a)}\}.
\end{equation*}
Thus the condition $\D_{R^\g}(g) ||g||_\g^{Q_\g} < 1$ is equivalent to:
\begin{equation*}
    \sum_{\a \in \Phi_{n_\s > 0} \cap (\Phi_{\S \setminus \{ \g \}})^+} n_\s(\a) m(\a) >  Q_\g  (\max_{\a \in \Phi_{n_\g > 0}}\{ \frac{n_\s(\a)}{n_\g(\a)} \} + \frac{\log |\g(g)|}{\log |\s(g)|}).
\end{equation*}
Since $g \in A^{\g}$, the terms $\log |\t(g)|$ for $\t \in \S$ satisfy a non trivial linear relation. Moreover, $g \in \bigcap_{\t \in \S\setminus \{ \g, \s \}} \ker \t$, so that $\log |\g(g)|$ depends linearly on $\log |\s(g)|$ and thus $\frac{\log |\g(g)|}{\log |\s(g)|} =: C_\s$ is a constant that depends only on the root system $\Phi$ and not on the choice of $g$. 
We proved the following criterion:
\begin{prop}\label{Combinatorial-Inequality}
Let $\s_1 , \s_2 \in \S \setminus \{ \g \}$ be two distinct simple roots satisfying: 
\begin{equation*}
        \sum_{\a \in \Phi_{n_{\s_i} > 0} \cap (\Phi_{\S \setminus \{ \g \}})^+} n_{\s_i}(\a) m(\a) \, \geq \, Q_\g \cdot (\max_{\a \in \Phi_{n_\g > 0}}\{ \frac{n_{\s_i}(\a)}{n_\g(\a)} \} + C_{\s_i})
\end{equation*} 
for $i =1,2$, with at least one of the two satisfying strict inequality. Then hypotheses of Theorem \ref{VanishingCriterion} are satisfied.
\end{prop}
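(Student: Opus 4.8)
The plan is to read off the contracting and bounded elements required by Theorem~\ref{VanishingCriterion} directly from the two inequalities, and then to verify the covering condition on root groups by inspecting the conjugation action of the relevant torus elements. The point is that essentially all the analytic content has already been packaged into Corollary~\ref{CohomologyGrowthCor} and the computation preceding the statement, so what remains is a short combinatorial translation.

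First I would introduce, for $i=1,2$, the element $g_i \in A^\gamma$ already singled out above: the one with $\tau(g_i)=1$ for all $\tau \in \Sigma\setminus\{\gamma,\sigma_i\}$ and $|\sigma_i(g_i)|>1$. Its existence is routine torus theory, since $A^\gamma$ is a maximal split torus of $M^\gamma$ of rank $r-1$ whose simple roots are $\Sigma\setminus\{\gamma\}$, so the $r-2$ conditions $\tau(g_i)=1$ cut out a one-parameter subtorus on which $\sigma_i$ is non-constant. The computation done above identifies the inequality for $\sigma_i$ with $\Delta_{R^\gamma}(g_i)\,||g_i||_\gamma^{Q_\gamma}\le 1$, strict in the strict case. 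Since $\Delta_{R^\gamma}(g_i)=\Delta_{R^\gamma}(g_i^{-1})^{-1}$ and $||g_i||_\gamma=||(g_i^{-1})^{-1}||_\gamma$, this is precisely the contraction (resp. boundedness) criterion of Corollary~\ref{CohomologyGrowthCor} applied to $\xi_i:=g_i^{-1}$. After relabelling so that $\sigma_1$ carries the strict inequality, I obtain a $\pi_p$-contracting element $\xi:=g_1^{-1}\in A^\gamma$ and a $\pi_p$-bounded element $g_2^{-1}\in A^\gamma$.

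Next I would check the covering condition. The idea I expect to be decisive is to propagate vanishing through the centraliser sets $Z_{(\cdot)}$ rather than the contraction sets $U_{(\cdot)}$: for $\eta\in A^\gamma\subseteq S$ and $x=\theta_\sigma(X)\in U_\sigma$ one has $\eta^{-1}x\eta=\theta_\sigma(\sigma(\eta)^{-1}X)$, so $U_\sigma\subseteq Z_\eta$ whenever $\sigma(\eta)=1$. Because $g_1$ is trivial on every simple root other than $\gamma$ and $\sigma_1$, the single element $\xi=g_1^{-1}$ satisfies $\sigma(\xi)=1$ for all $\sigma\in\Sigma\setminus\{\gamma,\sigma_1\}$; being contracting it is in particular $\pi_p$-bounded (its powers generate a non-relatively compact semigroup since $|\sigma_1(\xi)|\neq 1$), so it covers every such $U_\sigma$ at once. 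For the leftover root $\sigma_1$, the bounded element $g_2^{-1}$ satisfies $\sigma_1(g_2^{-1})=1$, because $\sigma_1\neq\gamma,\sigma_2$, whence $U_{\sigma_1}\subseteq Z_{g_2^{-1}}$. Together these exhaust $\Sigma\setminus\{\gamma\}$, and both hypotheses of Theorem~\ref{VanishingCriterion} are met.

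The only genuinely delicate points are bookkeeping rather than substance, and they are the ones I would watch most carefully: getting the orientation of the criterion right, so that one passes to the inverses $g_i^{-1}$, and realising the covering through the centralisers $Z_\xi$. The latter is exactly what allows a single contracting element $\xi=g_1^{-1}$ to handle all of $\Sigma\setminus\{\gamma,\sigma_1\}$ simultaneously, and it also explains structurally why the statement needs two roots: the contracting element controls every root \emph{except} its own $\sigma_1$, so a second bounded element, trivial on $\sigma_1$, is required to recover it.
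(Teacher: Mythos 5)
Your proof is correct and follows essentially the same route as the paper, whose argument is exactly the computation preceding the statement: one applies the criterion $\Delta_{R^\gamma}(g_i)\,||g_i||_\gamma^{Q_\gamma}\le 1$ (strict for one $i$) from Corollary \ref{CohomologyGrowthCor} to the elements $g_i \in A^\gamma \cap \bigcap_{\tau \neq \gamma, \sigma_i}\ker\tau$ with $|\sigma_i(g_i)|>1$, and covers each root group $U_\sigma$, $\sigma \in \Sigma\setminus\{\gamma\}$, via the centralizer of the element trivial on $\sigma$, just as you do. Your explicit bookkeeping of the passage to the inverses $g_i^{-1}$, and your observation that the contracting element itself serves as the bounded element for every simple root other than $\sigma_1$, merely spell out what the paper leaves implicit.
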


\begin{rem}
    The left-hand side and the constant $Q_\g$ depend on the root system $\Phi$ and the multiplicities of the roots. On the other hand, the second factor of the right-hand side depends only on $\Phi$.
\end{rem}

\subsection{Vanishing in degree 2 for classical simple groups}
We start a case-by-case study of simple algebraic groups over a local field $F$ to find those that satisfy the conditions of Proposition \ref{Combinatorial-Inequality}. We restrict ourselves to the so-called classical groups, leaving the exceptional groups aside.

\paragraph{Classification of classical simple groups over local fields} 
Let $F$ be a local field. We recall some parts of the classification of (absolutely) simple algebraic groups over $F$ up to isogeny (see \cite{tits-boulder} for the classification in the more general algebraic group setting, \cite{helgason} for the real case and \cite{tits-corvallis} for the non-Archimedean case). We do not need the full power of the classification via Satake diagrams as we only use its phrasing in classical terms. This means that we look at the associated ordinary Dynkin diagrams and take into account multiplicities. The classification result is that, apart for groups of exceptional type, the only sources of families of simple algebraic groups over $F$ are the groups $\mathrm{SL}_n$ and, in the terminology of \cite[23.8]{borel-algebraic-groups}, groups preserving $(\epsilon , \s)$-hermitian forms. The latter split into symmetric, skew-symmetric, hermitian and skew-hermitian forms and allow to deal with them in a uniform setting. This classification result applies both to the real and the non-Archimedean setting, though the restrictions on the parameters of the forms are different in each case (in most cases, the non-Archimedean case is more restrictive, except for $\mathrm{SL}_n$ and the skew-hermitian case).

We now list the families of all (isogeny classes of) classical absolutely simple algebraic groups over $F$ of split rank $r$. Our list is based on \cite[4.4]{tits-corvallis}, though that list only concerns the non-Archimedean case. For the real groups we mention, we compare each situation with the list in \cite[X, p.532-534]{helgason}.

- The special linear group $\mathrm{SL}_{r+1} (D)$ over $D$, where $D$ is a $d^2$-dimensional central division $F$-algebra. In the real case, $d = 1 $ or $d = 2$, corresponding to $\mathrm{SL}_{r+1}(\re)$ and $\mathrm{SL}_{r+1}(\mathbb{H})$. In the non-Archimedean case there is no restriction on $d$.

- The special unitary group $\mathrm{SU}(h)$ of a hermitian form $h$ in $n$ variables and Witt index $r$ over a quadratic extension of $F$. In the real case there is no restriction on $n$ and $r$, giving the groups $\mathrm{SU}(r, n-r)$. In the non-Archimedean case, $2r \leq n \leq 2r +2$.

- The special orthogonal group $\mathrm{SO}(q)$ of a quadratic form $q$ in $n$ variables and Witt index $r$. In the real case there is no restriction on $n$ and $r$,  giving the groups $\mathrm{SO}_{r, n-r}(\re)$. In the non-Archimedean case, $2r \leq n \leq 2r +4$.

- The symplectic group $\mathrm{Sp}_{2r}(F)$.

- The special unitary group $\mathrm{SU}(\Tilde{h})$ of a quaternion hermitian form $\Tilde{h}$ in $n$ variables and Witt index $r$. In the real case there is no restriction on $n$ and $r$, giving the groups $\mathrm{Sp}_{2r, 2(n-r)}(\re)$. In the non-Archimedean case, the Witt index is always maximal, that is, $n= 2r$ or $ 2r +1$. 

- The special orthogonal group $\mathrm{SO}(\Tilde{q})$ of a quaternion skew-hermitian form $\Tilde{q}$ in $n$ variables and Witt index $r$. In the real case we have $n= 2r$ or $2r+1$, giving the groups $\mathrm{SO}^*(2n)$. In the non-Archimedean case, $2r \leq n \leq 2r +3$.

\paragraph{Statement of the vanishing theorem}
The result we will prove is the following:

\begin{thm}\label{Vanishing in degree 2}
Let $G$ be $\SL(4, D)$ or one of the simple groups listed above with split rank $r \geq 4$. If $G$ is not of type $D_4$, there exists a simple root $\g$ such that $H^2_{\mathrm{ct}}(G, L^p(G)) = \{0\}$ for all $p>\max\{1, \Cdim(\partial H_\g) \}$.
\end{thm}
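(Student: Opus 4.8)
The plan is to reduce the statement to the purely combinatorial inequality of Proposition \ref{Combinatorial-Inequality} and then verify it family by family using the classification recalled above. Indeed, by Corollary \ref{Hk(G,Lp(G))=Hk-1(R,Lp(R,H1(AN,Lp(AN))))} it suffices, for $p > \max\{1, \Cdim(\partial H_\g)\}$, to show $H^1_{\mathrm{ct}}(R^\g, L^p(R^\g, V_p)) = \{0\}$ for \emph{some} simple root $\g$; by Theorem \ref{VanishingCriterion} this holds once we exhibit enough $\pi_p$-contracting and $\pi_p$-bounded elements in $A^\g$; and by Proposition \ref{Combinatorial-Inequality} (itself a consequence of the growth estimate of Corollary \ref{CohomologyGrowthCor}) it is enough to find a simple root $\g$ together with two distinct simple roots $\s_1, \s_2 \in \S \setminus \{\g\}$ satisfying
\[
\sum_{\a \in \Phi_{n_{\s_i} > 0} \cap (\Phi_{\S \setminus \{\g\}})^+} n_{\s_i}(\a)\, m(\a) \;\geq\; Q_\g \cdot \Big( \max_{\a \in \Phi_{n_\g > 0}} \tfrac{n_{\s_i}(\a)}{n_\g(\a)} + C_{\s_i} \Big),
\]
for $i = 1, 2$, with at least one inequality strict. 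Thus the theorem becomes a bookkeeping exercise on root systems with multiplicities.

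In every infinite family I would take $\g$ to be an extreme (end) node of the Dynkin diagram, so that $n_\g(\a) \in \{1,2\}$ for $\a \in \Phi_{n_\g > 0}$ and $Q_\g = \sum_{\a \in \Phi_{n_\g > 0}} n_\g(\a) m(\a)$ grows only \emph{linearly} in $r$. I would take $\s_1, \s_2$ to be interior nodes, for which the left-hand side — essentially the contribution of $\s_i$ to the modular function $\D_{R^\g}$ — counts roots straddling the node $\s_i$ and therefore grows \emph{quadratically} in $r$. The constant $C_{\s_i} = \log|\g(g)|/\log|\s_i(g)|$ is read off from the single linear relation among $\{\log|\tau(g)|\}_{\tau \in \S}$ imposed by $g \in A^\g \cap \bigcap_{\tau \neq \g, \s_i} \ker \tau$; for an end node $\g$ it turns out to be negative, which helps. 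The quadratic-beats-linear heuristic then forces the inequality once $r$ is large, and the role of the case analysis is to pin down exactly how large.

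The mechanism is cleanest for $A_r$. Taking $G = \SL(r+1, D)$, all multiplicities equal $d^2$, and $\g = \a_1 = e_1 - e_2$, one has $\Phi_{n_\g > 0} = \{e_1 - e_j : 2 \leq j \leq r+1\}$, all with $n_\g = 1$, so $Q_\g = r d^2$; for $\s = \a_k$ a short computation gives $\max_\a n_\s(\a)/n_\g(\a) = 1$ and $C_{\a_k} = -(r+1-k)/r$, while the left-hand side equals $(k-1)(r+1-k) d^2$, so the inequality collapses to $r+1-k \geq 1$, i.e. $k \leq r$, strict for $k < r$. Choosing $\s_1 = \a_2$ and $\s_2 = \a_r$ then works for all $r \geq 3$, which in particular settles $\SL(4,D)$ (type $A_3$). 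For the remaining families — the orthogonal, symplectic, unitary and skew-hermitian groups of types $B_r, C_r, BC_r, D_r$ — the same scheme applies, complicated by two features of the classification: roots of two lengths (so $n_\g(\a)$ may equal $2$ and the relevant root counts change), and multiplicities $m(\a)$ that genuinely depend on the form and on whether $F$ is real or non-Archimedean (including the long/short asymmetry and the extra middle multiplicity in the non-reduced $BC_r$ case). Carrying out the explicit evaluation of $Q_\g$, of the maximum, of $C_{\s_i}$ and of the left-hand side for each admissible multiplicity datum, and recording the outcome in tables, I expect the asymptotics to activate at $r \geq 4$ for $B_r, C_r, BC_r$ and only at $r \geq 5$ for $D_r$; since $D_4$ has rank $4 < 5$ it is precisely the case our estimate cannot reach, which is why it must be excluded.

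The genuine difficulty is not the reduction but the combinatorial verification itself. One must get the multiplicities right in each arithmetic case, correctly evaluate the constants $C_{\s_i}$ (which depend subtly on the chosen end node and on the long-versus-short geometry), and check the inequality near the threshold ranks, where the quadratic margin over the linear term is thinnest and where a careless choice of $\g$ or of $\s_1, \s_2$ would fail. The low-rank and non-reduced cases, together with the borderline $D_r$ behaviour that forces the exclusion of $D_4$, are where the bookkeeping is most delicate; everything else is routine once a good end node $\g$ has been selected.
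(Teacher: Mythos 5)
Your reduction chain is exactly the paper's: quasi-isometric invariance plus the spectral sequence (Corollary \ref{Hk(G,Lp(G))=Hk-1(R,Lp(R,H1(AN,Lp(AN))))}), then Theorem \ref{VanishingCriterion}, then the combinatorial criterion of Proposition \ref{Combinatorial-Inequality}. Your $A_r$ case is also correct and complete: with $\g = e_1 - e_2$ one has $Q_\g = r d^2$, $C_{\a_k} = -(r+1-k)/r$, left-hand side $(k-1)(r+1-k)d^2$, and the condition collapses to $r+1-k \geq 1$; taking $\s_1 = \a_2$ (strict) and $\s_2 = \a_r$ (equality) settles all $r \geq 3$, in particular $\SL(4,D)$. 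This matches the paper's arithmetic (its choices $\t_2, \t_3$ coincide with yours at $r=3$), and your slightly different $\s_2$ for large $r$ is equally valid since only one strict inequality is required.

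However, for every family other than $A_r$ you have not given a proof: you state the scheme and then write that you ``expect'' the inequality to activate at $r \geq 4$ for $B_r, C_r, BC_r$ and $r \geq 5$ for $D_r$. That expected verification \emph{is} the content of the theorem at this point of the paper — the reduction was already established in earlier sections, and the paper's proof consists precisely of computing, for each of the six classical families $\SL_{r+1}(D)$, $\mathrm{SU}(h)$, $\mathrm{SO}(q)$, $\mathrm{Sp}_{2r}(F)$, $\mathrm{SU}(\Tilde{h})$, $\mathrm{SO}(\Tilde{q})$ (with multiplicities depending on $d$, on $n-2r$, and on whether the root system is reduced), the quantities $Q_\g$, $C_{\s_i}$, $\max_\a n_{\s_i}(\a)/n_\g(\a)$ and the left-hand sums, then checking the inequality at the threshold ranks (Tables \ref{table: Root systems and the constant Di}--\ref{Computation of the Hausdorff dimension and the LHS}). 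As written, your argument proves the theorem only for $\SL_{r+1}(D)$. One concrete detail of your plan would also need repair: for $B_r, C_r, BC_r, D_r$ the paper's working witnesses are not two interior nodes but the \emph{opposite end} node $\s_1 = \t_r$ together with $\t_{r-1}$; the quadratic growth of the left-hand side there comes from coefficients $n_{\s_i}(\a) = 2$ and from the multiplicities of the roots $e_i \pm e_j$, not from interiority, and since several threshold cases are exact equalities (split $D_4$, and $r=3$ in types $A$ and $C$), a careless choice of $\s_1, \s_2$ genuinely fails. Your prediction that $D_4$ is exactly the borderline double-equality case is correct, but in a proof it must be computed, not expected.
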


In order to prove this theorem, we need to verify the conditions of Proposition \ref{Combinatorial-Inequality}. For this, we need to choose a simple root $\g$ to construct our parabolic subgroup and then two simple roots $\s_1, \s_2 \in \S\setminus \{ \g \}$ to construct two contractions. Each simple root $\g \in \S$ partitions the set of positive roots $\Phi^+$ into two disjoint sets, $\Phi_{n_\g > 0}$ and  $\Phi_{\S \setminus \{\g\}}$. The idea is to choose $\g$ so that $|\Phi_{n_\g > 0}|$ is as small as possible and $|\Phi_{\S \setminus \{\g\}}|$ is as big as possible (in the corresponding Dynkin diagram, this is done by choosing an extremal vertex).

\paragraph{Root systems}
We will start by computing the quantities in the inequality of Proposition \ref{Combinatorial-Inequality} that depend only on the root system $\Phi$. This is the constant:
\begin{equation*}
    D_{\s_i} = \max_{\a \in \Phi_{n_\g > 0}}\{ \frac{n_{\s_i}(\a)}{n_\g(\a)} \} + C_{\s_i}.
\end{equation*}

\begin{table}
\centering
\resizebox{\columnwidth}{!}{
\begin{tabular}{ |L|L|L|L|L|L| } 
\hline
 & A_r &  B_r & C_r & BC_r & D_r  \\
  \hline
\hline
 V = X(S) \otimes \re & \{ e_1 + e_2 + \ldots + e_{r+1} = 0 \} &  \re^r &  \re^r & \re^r &  \re^r \\ 
  \hline
 \textrm{Choice for }\g & \t_1 = e_1 - e_2 &  \t_1 = e_1 - e_2  &  \t_1 = e_1 - e_2  & \t_1  = e_1 - e_2 &  \t_1 = e_1 - e_2 \\ 
  \hline 
 n_\g(\a) \textrm{ for } \a \in \Phi_{n_\g > 0} & 1 & 1 & n_\g(2e_1) =2, \textrm{ else } 1 & n_\g(2e_1) =2, \textrm{ else } 1 & 1 \\ 
  \hline
   \textrm{Equation for } A^\g    & \sum_{i=1}^r (r+1-i)\t_i = 0 &\sum_{i=1}^r \t_i = 0 &\sum_{i=1}^{r-1} 2 \t_i + \t_r = 0  & \sum_{i=1}^r \t_i = 0 &\sum_{i=1}^{r-2} 2 \t_i + \t_{r-1} + \t_r = 0  \\ 
   \hline
   \hline
 \textrm{Choice for }\s_1    & \t_2 = e_2 - e_3  &\t_r = e_r & \t_r = 2e_r & \t_r = e_r &\t_r = e_{r-1} + e_r  \\ 
   \hline
   C_{\s_1}    & -(r-1)/r &-1 & -1/2 & -1 &-1/2 \\ 
   \hline
    \max_{\a \in \Phi_{n_\g > 0}}\{ \frac{n_{\s_1}(\a)}{n_\g(\a)} \}    & 1 &2 & 1 & 2 &1 \\ 
   \hline
      D_{\s_1}    & 1/r &1 & 1/2 & 1 & 1/2 \\ 
   \hline
   \hline
 \textrm{Choice for } \s_2& \t_3 = e_3 - e_4 &  \t_{r-1} = e_{r-1} - e_r  &  \t_{r-1} = e_{r-1} - e_r    & \t_{r-1} = e_{r-1} - e_r   & \t_{r-1} = e_{r-1} - e_r  \\ 
  \hline
 C_{\s_2}    & -(r-2)/r &-1 & -1 &  -1 &-1/2 \\ 
 \hline
    \max_{\a \in \Phi_{n_\g > 0}}\{ \frac{n_{\s_2}(\a)}{n_\g(\a)} \}    & 1 &2 & 2 & 2 & 1 \\ 
 \hline
D_{\s_2}    & 2/r &1 & 1 & 1 & 1/2 \\ 
   \hline
\end{tabular}
}
\caption{Root systems and the constant $D_{\s_i}$}
\label{table: Root systems and the constant Di}
\end{table}

The constant $D_{\s_i}$ is computed in Table \ref{table: Root systems and the constant Di} for the infinite families of root systems. 

We now explain the contents of Table \ref{table: Root systems and the constant Di}. The columns are indexed by the five infinite families of root systems: $A_r, B_r, C_r, BC_r$ and $D_r$.

In the first line, we describe the classical choice for the real vector space $V = X(S) \otimes \re$ so that the root system $\Phi$ is described as in \cite[VI, Planches, I-IV]{bourbaki} with simple roots $\t_1, \ldots, \t_r$. In the second line we describe our choice of simple root $\g \in \S$, written in terms of the natural description in coordinates of each root system. The third line computes the constants $n_\g(\a)$ for $\a \in \Phi_{n_\g>0}$. The fourth line describes the linear equation we choose so that the ambient space $X(A^{\g}) \otimes \re$ of the sub-root system $\Phi_{\S \setminus \{ \g\}}$ of $A^{\g}$ sits naturally inside the ambient space $V$ of $\Phi$.

In the next part of Table \ref{table: Root systems and the constant Di}, we first list our choices for $\s_1$ and $\s_2$. We then compute the preliminary constant $C_{\s_i}$ thanks to the equation for $A^\g$ that we computed before. The computation of $\max_{\a \in \Phi_{n_\g > 0}}\{ \frac{n_{\s_i}(\a)}{n_\g(\a)} \} $ requires to first compute $n_{\s_i}(\a)$ for $\a \in \Phi_{n_\g>0}$ (this can be found in \cite[VI, Planches I-IV]{bourbaki}), we only recover the final result in Table \ref{table: Root systems and the constant Di}. The computation of $D_{\s_i}$ amounts to add the two last rows.

\paragraph{Multiplicities}

The multiplicities of the roots in each of the six families listed above are represented in Table \ref{table: Multiplicities of roots} and can be found in \cite[Chapter V, 23]{borel-algebraic-groups}.

\begin{table}
\centering
\begin{tabular}{ |L|L|L|L|L|L|L| } 
\hline
 & \mathrm{SL}_{r+1}(D) &  \mathrm{SU}(h) & \mathrm{SO}(q) & \mathrm{Sp}_{2r}(F) &  \mathrm{SU}(\Tilde{h}) & \mathrm{SO}(\Tilde{q})  \\
 \hline
 e_i - e_j &d^2 & 2 & 1 & 1 & 4 & 4 \\
  \hline
 e_i + e_j & 0 & 2 & 1 & 1 & 4 & 4\\ 
  \hline
 e_i        & 0 & 2(n-2r) & n-2r & 0 & 4(n-2r) & 4(n-2r)\\ 
   \hline
 2e_i & 0 &  1 & 0   & 1 & 3 & 1\\ 
 \hline
\end{tabular}
\caption{Multiplicities of roots in terms of classical presentations}
\label{table: Multiplicities of roots}
\end{table}

We rewrite the inequality appearing in Proposition \ref{Combinatorial-Inequality} as follows:
\begin{equation*}
        D_{\s_i}^{-1} \sum_{\a \in \Phi_{n_{\s_i} > 0} \cap (\Phi_{\S \setminus \{ \g \}})^+} n_{\s_i}(\a) m(\a) > Q_\g.
\end{equation*} 

In Table \ref{Computation of the Hausdorff dimension and the LHS}, we compute the two sides of this inequality using the multiplicities appearing in Table \ref{table: Multiplicities of roots} for the choices of $\g, \s_1$ and $\s_2$ made in Table \ref{table: Root systems and the constant Di}. More precisely, we first compute the dimension $Q_\g = \mathrm{Hausdim}(U_\g, | \cdot |_\g) = \sum_{\a \in \Phi_{n_\g > 0}} n_\g(\a) m(\a)$ and then the term appearing in the left hand side $M_i = \sum_{\a \in \Phi_{n_{\s_i} > 0} \cap (\Phi_{\S \setminus \{ \g \}})^+} n_{\s_i}(\a) m(\a)$ for our choices of roots $\s_1$ and $\s_2$. The line called LHS for $\s_i$ contains the term $D_{\s_i}^{-1} M_i = D_{\s_i}^{-1} \sum_{\a \in \Phi_{n_{\s_i} > 0} \cap (\Phi_{\S \setminus \{ \g \}})^+} n_{\s_i}(\a) m(\a)$. For groups preserving some type of form, $n$ denotes the number of variables of the form and the $F$-rank $r$ of the group coincides with its Witt index. In the line "Inequality for $\s_i$" we recover the conditions on $r$ so that the condition in Proposition \ref{Combinatorial-Inequality} is satisfied for $\s_1$ or $\s_2$. Items having two lines correspond to quantities that change depending on the root system of the group (the only line that does not follow this is the line "Inequality for $\s_i$" where conditions on the rank are slightly more complicated).

\begin{proof}[Proof of theorem \ref{Vanishing in degree 2}]
By looking at Table \ref{Computation of the Hausdorff dimension and the LHS}, we see that the inequality of Proposition \ref{Combinatorial-Inequality} is satisfied for both roots $\s_1$ and $\s_2$ for $r \geq 4$ in all cases, except for $\mathrm{SO}(q)$ where $q$ is a quadratic form in $8$ variables and Witt index 4. For $G = \SL(4, F)$, there is strict inequality for $\s_1$ and equality for $\s_2$. Hence we can apply Theorem \ref{VanishingCriterion} and conclude using Corollary \ref{Hk(G,Lp(G))=Hk-1(R,Lp(R,H1(AN,Lp(AN))))}.
\end{proof}

\begin{table}
\centering
\resizebox{\columnwidth}{!}{
\begin{tabular}{ |L|L|L|L|L|L|L| } 
\hline
 & \mathrm{SL}_{r+1}(D) &  \mathrm{SU}(h) & \mathrm{SO}(q) & \mathrm{Sp}_{2r}(F) &  \mathrm{SU}(\Tilde{h}) & \mathrm{SO}(\Tilde{q})  \\
 \hline
  \multirow{2}{4em}{\textrm{Root system}}  & \multirow{2}{4em}{$A_r$} & C_r \textrm{ if }n = 2r & D_r \textrm{ if }n = 2r & \multirow{2}{4em}{$C_r$} & C_r \textrm{ if }n = 2r & C_r \textrm{ if }n = 2r\\ 
  &  & BC_r \textrm{ if }n > 2r & B_r \textrm{ if }n > 2r &  & BC_r \textrm{ if }n > 2r & BC_r \textrm{ if }n > 2r\\ 
  \hline
 Q_\g &r d^2 & 2n-2 & n-2 & 2r & 4n-2 & 4n-6 \\
  \hline
  \hline
 \multirow{2}{4em}{$D_{\s_1}$}  & \multirow{2}{4em}{$1/r$} & 1/2 & 1/2 & \multirow{2}{4em}{$1/2$} & 1/2 & 1/2\\ 
  &  & 1 & 1 &  & 1 & 1\\ 
  \hline
  \multirow{2}{4em}{ $M_1$}  & \multirow{2}{4em}{$(r-1)d^2$} & (r-1)^2  & (r-1)(r-2)/2 & \multirow{2}{5em}{$r(r-1)/2$} & (r-1)(2r-1) & (r-1)(2r-3)\\ 
   & & 2(r-1)(n-r-1) & (r-1)(n-r-2) &  & 2(r-1)(2n-2r-1) & 2(r-1)(2n - 2r-3)\\  
   \hline
    \textrm{LHS for }\s_1 & r(r-1)d^2 & 2(r-1)(n-r-1) & (r-1)(n-r-2) & r(r-1) & 2(r-1)(2n-2r-1) & 2(r-1)(2n - 2r-3) \\
  \hline
 \multirow{4}{4em}{\textrm{Inequality for } $\s_1$}& r \geq 3  &   r \geq 4 \textrm{ if } n -2r = 0, 1 &  r \geq 5 \textrm{ (equality } r = 4 )  & r \geq 4 &   r \geq 4 \textrm{ if } n = 2r &  r \geq 4 \textrm{ if } n -2r =0 , 1 \\ 
  & (\textrm{equality for} &  r \geq 3 \textrm{ if } n -2r \geq 2  &  \textrm{ if } n  = 2r   & (\textrm{equality } r=3) &   r \geq 3 \textrm{ if } n -2r \geq 1 & r \geq 3 \textrm{ if } n -2r \geq 2 \\ 
& r=2) &  &  r \geq 4 \textrm{ if } n -2r = 1,2  &  &  & \\ 
& &  &  r \geq 3 \textrm{ if } n -2r \geq 3  &  &  & \\
   \hline
   \hline
 \multirow{2}{4em}{$D_{\s_2}$}  & \multirow{2}{4em}{$2/r$} & \multirow{2}{4em}{$1$} & 1/2 & \multirow{2}{4em}{$1$} & \multirow{2}{4em}{$1$} & \multirow{2}{4em}{$1$}\\ 
  &  &  & 1 &  &  & \\ 
  \hline
  \multirow{2}{4em}{$M_2$}  & \multirow{2}{5em}{$2(r-2)d^2$} & \multirow{2}{6em}{$2(n-r)(r-2)$}  & (r-1)(r-2)/2 & \multirow{2}{6em}{$(r-2)(r+1)$}  & \multirow{2}{8em}{$2(r-2)(2n - 2r +1)$}  & \multirow{2}{8em}{$2(r-2)(2n - 2r -1)$} \\ 
   & &  & (r-2)(n-r-1) &  &  & \\  
   \hline
    \textrm{LHS for }\s_2 & r(r-2)d^2 & 2(r-2)(n-r) & (r-2)(n-r-1) & (r-2)(r+1) & 2(r-2)(2n-2r+1) & 2(r-2)(2n - 2r-1) \\
  \hline
 \multirow{3}{4em}{\textrm{Inequality for } $\s_2$}& r \geq 4  &  \multirow{3}{4em}{$r \geq 4$} &  r\geq 5 \textrm{ (equality } r = 4 )   & \multirow{3}{4em}{$r \geq 4$} &  \multirow{3}{4em}{$r \geq 4$}& \multirow{3}{4em}{$r \geq 4$}\\ 
  & (\textrm{equality } r=3) &  & \textrm{if } n = 2r,  &  &  & \\ 
  &  &  &  r\geq 4 \textrm{ otherwise }  &  &  & \\ 
   \hline
\end{tabular}
}

\caption{Computation of $Q_\g$ and $\sum_{\a \in \Phi_{n_{\s_i} > 0} \cap (\Phi_{\S \setminus \{ \g \}})^+} n_{\s_i}(\a) m(\a)$}
\label{Computation of the Hausdorff dimension and the LHS}
\end{table}

\subsection{Uniform vanishing for admissible simple real Lie groups}

In \cite{bourdon-remy-vanishings}, Bourdon and Rémy obtain vanishing of $L^p$-cohomology of many real simple Lie groups in many degrees, for values of $p$ depending on the degree in question. The groups for which their result applies are the simple Lie groups for which there exists some maximal parabolic subgroup such that its solvable radical is isometric to some real hyperbolic space of dimension $d$, for some $d \geq 2$. They call these groups \textit{admissible}. We state their vanishing result for degree 2.

\begin{thm}\label{Uniform vanishing for admissible groups} \cite[Theorem 1.4]{bourdon-remy-vanishings}
Let $G$ be an admissible simple real Lie group. Let $d$ be the dimension of the real hyperbolic space associated to $G$. Then
\begin{equation*}
    H^2_{\mathrm{ct}}(G, L^p(G)) = \{0\} \; \textrm{ for } \; p \leq \frac{d-1}{2}.
\end{equation*}
\end{thm}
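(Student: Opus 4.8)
The plan is to follow the same reduction used throughout the paper but, because $p$ is now small, to keep the full Hochschild--Serre spectral sequence rather than letting it collapse as in Corollary \ref{Hk(G,Lp(G))=Hk-1(R,Lp(R,H1(AN,Lp(AN))))}. The decisive input is the admissibility hypothesis: by definition it furnishes a simple root $\g \in \S$ whose associated Heintze group $H_\g$ is isometric to the real hyperbolic space $\mathbb{H}^d_\re$, and for such a space Pansu has computed the $L^p$-cohomology completely. First I would invoke quasi-isometric invariance \cite[1.1]{bourdon-remy-vanishings} to replace $H^2_\mathrm{ct}(G, L^p(G))$ by $H^2_\mathrm{ct}(R^\g \ltimes H_\g, L^p(R^\g \ltimes H_\g))$ and then apply Theorem \ref{Lp-Hochschild-Serre} to the splitting $R^\g \ltimes H_\g$, producing a spectral sequence with second page
\begin{equation*}
    E_2^{k,l} = H^k_\mathrm{ct}(R^\g, L^p(R^\g, H^l_\mathrm{ct}(H_\g, L^p(H_\g))))
\end{equation*}
abutting to $H^*_\mathrm{ct}(R^\g \ltimes H_\g, L^p(R^\g \ltimes H_\g))$.

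The strategy is then to show that the three terms $E_2^{0,2}$, $E_2^{1,1}$ and $E_2^{2,0}$ feeding total degree $2$ all vanish, which forces $H^2_\mathrm{ct}(G, L^p(G)) = \{0\}$. Each of these terms has coefficient module an $L^p(R^\g, \cdot)$-space built from $H^l_\mathrm{ct}(H_\g, L^p(H_\g))$ with $l \in \{0,1,2\}$, so it suffices to prove that all three coefficient spaces vanish for $p \leq (d-1)/2$; no information about the cohomology of $R^\g$ is needed. Using the comparison theorems between continuous group $L^p$-cohomology and de Rham $L^p$-cohomology \cite[Theorems 6.5 and 6.7]{bourdon-remy-non-vanishing}, valid because $H_\g$ is isometric to $\mathbb{H}^d_\re$, I would identify $H^l_\mathrm{ct}(H_\g, L^p(H_\g))$ with $L^p H^l_\mathrm{dR}(\mathbb{H}^d_\re)$. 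Here $H^0$ vanishes since $H_\g$ is non-compact, $L^p H^1_\mathrm{dR}(\mathbb{H}^d_\re) = \{0\}$ for $p \leq d-1$ \cite{pansu89}, and $L^p H^2_\mathrm{dR}(\mathbb{H}^d_\re) = \{0\}$ for $p \leq (d-1)/2$ \cite{pansu08}; since $(d-1)/2 \leq d-1$, the hypothesis $p \leq (d-1)/2$ simultaneously kills all three coefficient groups.

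Before running the spectral sequence I would also check the hypotheses of Theorem \ref{Lp-Hochschild-Serre} in this range of $p$, namely that $C^*(H_\g, L^p(H_\g))$ is homotopically equivalent to a complex of Banach spaces and that each $H^k_\mathrm{ct}(H_\g, L^p(H_\g))$ is Hausdorff; both should follow from Pansu's description of $L^p H^*_\mathrm{dR}(\mathbb{H}^d_\re)$ together with the Sobolev estimates of \cite{tessera-sobolev}. The hard part is purely analytic and concentrated at the endpoint $p = (d-1)/2$: one must be sure that Pansu's degree-$2$ vanishing really holds with the non-strict inequality and that no non-Hausdorff pathology in the coefficient module survives there, since otherwise the boundary case of the theorem would fail. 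Away from the endpoint the three coefficient groups vanish robustly and the vanishing of the relevant $E_2$ terms is immediate, so the only genuine obstacle is this borderline regularity question for the $L^p$-cohomology of real hyperbolic space.
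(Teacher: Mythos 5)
This theorem is quoted verbatim from \cite[Theorem 1.4]{bourdon-remy-vanishings} (the paper supplies no proof of its own), and your sketch reconstructs exactly the Bourdon--R\'emy argument that the paper's introduction describes: quasi-isometric invariance to pass to a parabolic, the Hochschild--Serre spectral sequence of Theorem \ref{Lp-Hochschild-Serre} applied to the splitting over the Heintze group $H_\g \simeq \mathbb{H}^d_\re$, and Pansu's computation of $L^pH^*_{\mathrm{dR}}(\mathbb{H}^d_\re)$ to kill the coefficient modules in degrees $0,1,2$ when $p \leq (d-1)/2$. You also correctly isolate where the real analytic work lies (Hausdorffness of $H^l_{\mathrm{ct}}(H_\g, L^p(H_\g))$ in \emph{all} degrees $l$, which the spectral sequence hypotheses require, and the endpoint $p=(d-1)/2$), so your proposal matches the intended proof.
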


Their result gives vanishing of the second $L^p$-cohomology group for small values of $p > 1$. On the other hand, when $F = \re$ or $\co$, Theorem \ref{Vanishing in degree 2} gives vanishing of the second $L^p$-cohomology group for large values of $p$. In this section, we address the question of when these two results combined give vanishing for all $p> 1$.

In combinatorial terms, the admissibility condition amounts to ask that there exists some simple root $\s$ such that the coefficients $n_\s(\a)$ are $1$ for all roots $ \a \in \Phi_{n_\s > 0}$. Such a simple root is called a \textit{good} root. In this case the group $H_\s$ is the real hyperbolic space of dimension $d$, with $d -1 = Q_\s$. Then the previous cited theorem gives vanishing for $p \leq \max \{ Q_\s , \, \s \textrm{ good root} \} / 2$. Theorem \ref{Vanishing in degree 2} gives the desired vanishing for $p > Q_\g$, where $\g$ is our choice of simple root, as in Table \ref{table: Root systems and the constant Di}. Then the condition we need to guarantee that $H^2_{\mathrm{ct}}(G, L^p(G)) = \{0\}$ for all $p > 1$ is
\begin{equation*}
    Q_\g  \leq \max \{ Q_\s , \, \s \textrm{ good root} \} / 2.
\end{equation*}

Our choice of simple root $\g$ was made so that $Q_\g$ is small, in particular in all our choices $Q_\g$ grows linearly with the rank $r$ of the group $G$. The previous inequality has chances to be satisfied for many groups of large rank, since there exists often some good root $\s$ such that $Q_\s$ grows quadratically with $r$. More precisely, such a root exists in the root systems $A_r$, $C_r$ and $D_r$ but not in $B_r$. The following corollary is obtained using the computation of $Q_\g$ present in Table \ref{Computation of the Hausdorff dimension and the LHS} and the computation of $\max \{ Q_\s , \, \s \textrm{ good root} \}$ that can be found in the tables \cite[p. 1319 and 1320]{bourdon-remy-vanishings}.

\begin{cor}
Let $G$ be one of the following admissible simple Lie groups: $\SL_r(\re)$, $\SL_r(\co)$, $\SL_r(\mathbb{H})$, $\mathrm{Sp}_{2r}(\re)$, $\mathrm{Sp}_{2r}(\co)$ with $r \geq 7$, or $\mathrm{SU}_{r,r}(\re)$, $\mathrm{Sp}_{2r,2r}(\re)$, $\mathrm{SO}_{r,r}(\re)$, $\mathrm{SO}_{2r}(\co)$, $\mathrm{SO}^*(4r)$ with $r \geq 8$. Then:
\begin{equation*}
    H^2_{\mathrm{ct}}(G, L^p(G)) = \{0\} \; \textrm{ for all } \; p > 1.
\end{equation*}
\end{cor}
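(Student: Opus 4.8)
The plan is to cover the full range $p>1$ by gluing together the two complementary vanishing ranges available for these groups. On the one hand, Theorem \ref{Vanishing in degree 2}, applied with our standard choice of simple root $\g = e_1 - e_2$, gives $H^2_{\mathrm{ct}}(G, L^p(G)) = \{0\}$ for every $p > Q_\g$, where $Q_\g$ is the Hausdorff dimension computed in Table \ref{Computation of the Hausdorff dimension and the LHS}. On the other hand, Theorem \ref{Uniform vanishing for admissible groups} gives the same vanishing for every $p \leq \tfrac{1}{2}\max\{Q_\s : \s \text{ good}\}$, where a simple root $\s$ is \emph{good} when $n_\s(\a) = 1$ for all $\a \in \Phi_{n_\s > 0}$, so that $H_\s$ is a real hyperbolic space of dimension $Q_\s + 1$. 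These two ranges exhaust $(1, \infty)$ exactly when
\begin{equation*}
    Q_\g \leq \tfrac{1}{2}\max\{Q_\s : \s \text{ good}\},
\end{equation*}
so the entire argument reduces to verifying this single inequality for each group in the list.

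First I would record the relevant data. Each listed group is of type $A_{r-1}$ (the groups $\SL_r$), of type $C_r$ (the symplectic and quaternionic/hermitian families of maximal Witt index), or of type $D_r$ (the split and complex orthogonal families); crucially, none is of type $B_r$. The values $Q_\g$ are read directly off Table \ref{Computation of the Hausdorff dimension and the LHS} and grow \emph{linearly} in $r$: for example $Q_\g = (r-1)d^2$ in type $A_{r-1}$, $Q_\g = 2r$ for $\mathrm{Sp}_{2r}(\re)$, $Q_\g = 8r-2$ for $\mathrm{Sp}_{2r,2r}(\re)$, and so on. By contrast $\max\{Q_\s : \s \text{ good}\}$ grows \emph{quadratically}, and is obtained from the tables of \cite[p. 1319 and 1320]{bourdon-remy-vanishings}; the key structural input, which also certifies admissibility, is the identification of the good roots. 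In type $A_{r-1}$ every simple root is good and the maximum $\lfloor r^2/4\rfloor d^2$ is attained at a central node; in type $C_r$ the \emph{only} good root is the long simple root $\t_r = 2e_r$, whose associated parabolic is the Siegel parabolic with abelian unipotent radical; in type $D_r$ the maximum is attained at a fork node.

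Second, I would substitute these values and solve the inequality case by case. In type $A_{r-1}$ the factor $d^2$ cancels and the condition becomes $2(r-1) \leq \lfloor r^2/4\rfloor$, which holds precisely for $r \geq 7$, handling $\SL_r(\re)$, $\SL_r(\co)$ and $\SL_r(\mathbb{H})$ simultaneously. In type $C_r$ one gets the linear condition $8 \leq r+1$ for $\mathrm{Sp}_{2r}(\re)$ and $\mathrm{Sp}_{2r}(\co)$, hence $r\geq 7$, but a genuinely quadratic one such as $r^2 - 8r + 4 \geq 0$ for $\mathrm{SU}_{r,r}(\re)$, hence $r\geq 8$, and likewise $2r^2 - 15r + 4 \geq 0$ and $2r^2 - 17r + 12 \geq 0$ for $\mathrm{Sp}_{2r,2r}(\re)$ and $\mathrm{SO}^*(4r)$, both giving $r\geq 8$. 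In type $D_r$ the condition reduces after cancelling $(r-1)$ to $r \geq 8$ for both $\mathrm{SO}_{r,r}(\re)$ and $\mathrm{SO}_{2r}(\co)$. Collecting these thresholds yields exactly the bounds stated in the corollary, and Corollary \ref{Hk(G,Lp(G))=Hk-1(R,Lp(R,H1(AN,Lp(AN))))} is not needed again since both input theorems already deliver vanishing for $G$ itself.

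The main obstacle is that the overlap inequality is \emph{sharp}: at each threshold it holds with equality or only barely, so no uniform rank bound works and each family must be checked against the exact values of $Q_\g$ and $\max Q_\s$ rather than against asymptotics. The conceptual point that must not be mishandled is the determination of the good roots: in type $C_r$, using any simple root other than the long one would produce some $n_\s(\a) = 2$, destroying the hyperbolic identification of $H_\s$ and hence the applicability of Theorem \ref{Uniform vanishing for admissible groups}. This same phenomenon — the absence of a good root with quadratic $Q_\s$ in type $B_r$ — is precisely why the general orthogonal groups $\mathrm{SO}(q)$ with $n > 2r$ do not appear in the statement.
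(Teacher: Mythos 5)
Your proposal is correct and takes essentially the same route as the paper: the paper likewise glues the range $p \le \frac{1}{2}\max\{Q_\s : \s \textrm{ good}\}$ from Theorem \ref{Uniform vanishing for admissible groups} with the range $p > Q_\g$ from Theorem \ref{Vanishing in degree 2}, reduces everything to the single inequality $Q_\g \le \frac{1}{2}\max\{Q_\s : \s \textrm{ good}\}$, and verifies it using the values of $Q_\g$ in Table \ref{Computation of the Hausdorff dimension and the LHS} together with the tables of \cite{bourdon-remy-vanishings}. Your case-by-case arithmetic (including the identification of the good roots, the role of the long root in type $C_r$, and the failure in type $B_r$) simply makes explicit the verification the paper delegates to those tables, and all your thresholds agree with the stated ones.
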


\begin{rem}
The only classical families of admissible simple real Lie groups missing in this corollary are $\mathrm{SO}_{2r+1}(\co)$ and $\mathrm{SO}_{r,n - r}(\re)$ with $n>2r$. These are the admissible groups with (restricted) root system $B_r$. We cannot obtain vanishing for all $p> 1$ for these groups by complementing our results with those from \cite{bourdon-remy-vanishings} because our choice of $\g$ is in fact the only good root in the root system $B_r$ and it is not possible to have $Q_\g \leq Q_\g / 2$.
\end{rem}

\bibliographystyle{amsalpha}
\bibliography{refs.bib}

\noindent Antonio López Neumann \\
Mathematical Institute of the Polish Academy of Sciences (IMPAN), Warsaw \\ 00-656 Warsaw, Poland \\
alopez@impan.pl 

\end{document}